\def\cleardoublepage{\clearpage\if@twoside \ifodd\c@page\else%
         \hbox{}%
     \thispagestyle{empty}%              % Empty header styles
     \newpage%
     \if@twocolumn\hbox{}\newpage\fi\fi\fi}
\let\cleardoublepage\clearpage
\newtheorem{thm}{Theorem}[section]
\newtheorem{cor}[thm]{Corollary}
\newtheorem{lem}[thm]{Lemma}
\newtheorem{pro}[thm]{Proposition}
\newtheorem{den}[thm]{Definition}
\newtheorem{oss}[thm]{Remark}
\numberwithin{equation}{section}
\begin{document}

\title[]{The porous medium equation with measure data \\ on negatively curved Riemannian manifolds}

\author {Gabriele Grillo, Matteo Muratori, Fabio Punzo}

\address {Gabriele Grillo: Dipartimento di Matematica, Politecnico di Milano, Piaz\-za Leonardo da Vinci 32, 20133 Milano, Italy}
\email{gabriele.grillo@polimi.it}

\address{Matteo Muratori: Dipartimento di Matematica ``F. Enriques'', Universit\`a degli Studi di Milano, via Cesare Saldini 50, 20133 Milano, Italy}
\email{matteo.muratori@unimi.it}
%\email{fabio.punzo@unimi.it}

\address{Fabio Punzo: Dipartimento di Matematica ``F. Enriques'', Universit\`a degli Studi di Milano, via Cesare Saldini 50, 20133 Milano, Italy}
\email{fabio.punzo@unimi.it}

\keywords{Porous medium equation; Sobolev
inequalities; Green function; potential analysis; superharmonic functions; nonlinear diffusion equations; smoothing effect;
asymptotics of solutions.}
%
%\address {Gabriele Grillo, Matteo Muratori: Dipartimento di Matematica, Politecnico di Milano, Piaz\-za Leonardo da Vinci 32, 20133 Milano, Italy}
%
%\email {gabriele.grillo@polimi.it}
%
%
%\email {matteo.muratori@polimi.it}

%\vskip-20pt

\begin{abstract}
We investigate existence and uniqueness of weak solutions of the Cauchy problem for the porous medium equation on negatively curved Riemannian
manifolds. We show existence of solutions taking as initial condition a finite Radon measure, not necessarily positive. We then establish uniqueness
in the class of nonnegative solutions, under a quadratic lower bound on the Ricci curvature. On the other hand, we prove that any weak solution of the porous medium equation necessarily takes on as initial
datum a finite Radon measure. In addition, we obtain some results in potential analysis on manifolds, concerning the validity of a modified
version of the mean-value inequality for superharmonic functions, and properties of potentials of positive Radon measures. Such results are new and of independent interest, and are crucial for our approach.
\end{abstract}
\maketitle

%\tableofcontents

\section{Introduction}
We are concerned with existence and uniqueness of weak solutions
of Cauchy problems for the \emph{porous medium equation} on
Riemannian manifolds of the following type:
\begin{equation}\label{e64}
\begin{cases}
u_t \,=\, \Delta (u^m) & \textrm{in}\;\; M\times (0,\infty)\,, \\
u \,=\, \mu & \textrm{on}\;\; M\times \{0\}\,,
\end{cases}
\end{equation}
where $M$ is an $N$-dimensional complete, simply connected Riemannian manifold with nonpositive sectional curvatures (namely a \emph{Cartan-Hadamard manifold}), $\Delta$ is the Laplace-Beltrami
operator on $M$, $m>1$ and $\mu$ is a finite Radon measure on $M$. Note that, when dealing with changing-sign solutions, as usual we set $ u^m = |u|^{m-1}u $.

%It is worth mentioning that the porous medium equation naturally arises in many fields of science to model several diffusion phenomena. Clearly, if
%one also considers the effect of the possibly curved space where such phenomena take place, the latter can be described by problem \eqref{e64}.

\smallskip

In the special case of the Euclidean space, problem \eqref{e64} has deeply been investigated in \cite{Pierre}. In particular, existence and
uniqueness results for nonnegative solutions have been established. More recently, we should mention that similar results have been generalized to the \emph{fractional} porous medium equation \cite{Vaz14,GMPmu,GMPas}. Furthermore, problem \eqref{e64} with the choice $M=\mathbb{H}^N$, namely
\begin{equation}\label{e64i}
\begin{cases}
u_t \,=\, \Delta (u^m) & \textrm{in}\;\; \mathbb{H}^N \times (0,\infty)\,, \\
u \,=\, \mu & \textrm{on}\;\; \mathbb{H}^N \times \{0\}\,,
\end{cases}
\end{equation}
where $ \mathbb{H}^N $ denotes the $N$-dimensional hyperbolic space, has lately been addressed in a number of papers. In fact, in \cite{VazH} it has
been studied for $m>1$ and $ \mu $ a Dirac delta, in \cite{Pu1,Pu2} for $m>1$ and $ \mu \in L^\infty(\mathbb{H}^N) $, and in \cite{GMhyp} for
$ \mu \in L^p(\mathbb{H}^N) $ for any $p>p_0$ (for a certain $p_0(m,N)$) in a {\it fast diffusion} regime, i.e.~$(N-2)/(N+2)<m<1$. More precisely, in
\cite{VazH} a thorough analysis on the fundamental solution of the differential equation in \eqref{e64i}, that is the solution of \eqref{e64i} with
$\mu=\delta$, is performed. Such special solution is then used to study the large-time behaviour of nonnegative solutions to \eqref{e64i} with $ \mu
\in L^1(\mathbb{H}^N) $.

\smallskip

The aim of our paper is to investigate existence and uniqueness of weak solutions to problem \eqref{e64}, under the hypothesis that the sectional curvatures are nonpositive (this is enough for existence), and that the Ricci curvature is bounded from below by $-C(1+\operatorname{dist}(x,o)^2) $ for some positive constant $C$ and a fixed point $ o \in M $ (this is required for uniqueness). Under our assumptions $M$ is necessarily {\it nonparabolic} (see Section \ref{RG}), hence the Green function $G(x,y)$ on $M$ is finite for all $x\neq y$.

In particular, we show that for any given finite Radon measure $\mu$ (not necessarily positive) there exists a weak solution to problem \eqref{e64}
which takes on the initial condition in a suitable ``dual'' sense. Note that, in general, such solution can change sign. On the other hand, we are
able to prove uniqueness under the additional assumption that $ \mu $, and so the corresponding solutions, is nonnegative. Furthermore, we show that
any weak solution of the differential equation in problem \eqref{e64} (i.e.~without a prescribed initial condition) necessarily takes on, in a
suitable weak sense, a finite Radon measure as $t\to 0^+$, which is uniquely determined (the initial trace). Observe that this property also justifies the fact that we
consider a finite Radon measure as initial datum in problem \eqref{e64}. Let us stress that no result in the literature seems to be available as
concerns {\em signed} measures, for which we can prove existence and trace results.
\smallskip

%We point out that, in the proof of existence, we obtain key \emph{a priori} estimates, and in particular the \emph{smoothing effect} for solutions
%(which guarantees that they are bounded for positive times), by means of suitable functional inequalities that hold on $M$, in view of our
%assumptions on the sectional curvatures.
Let us mention that, in order to prove that the initial condition is taken on in a suitable weak sense, we
exploit some results from potential theory on Riemannian manifolds that we have established here precisely for this purpose, which also have an
independent interest. To be specific, we extend to Riemannian manifolds some results for potentials of nonnegative measures given in the monograph
\cite{Land}, and we obtain a suitable \emph{mean-value inequality} for superharmonic (and subharmonic) functions, without assuming any sign condition
and in particular dealing also with positive superharmonic functions. Note that, in contrast with the classical results in \cite{LS}, where the
standard mean value of nonnegative smooth subharmonic functions are considered, we deal with a modified mean value which takes into account the Green
function of $ (-\Delta) $ on $M$: this allows us to remove the nonnegativity assumption. This is essential for our purposes; in fact, since we deal
with positive superharmonic functions, the results in \cite{LS} cannot be applied in such case. In addition, we work with lower semi-continuous
functions with values in $(-\infty,\infty]$ which are superharmonic (or subharmonic with values in $[-\infty,\infty)$) in a distributional sense
only: in fact we shall apply such inequalities to potentials of Radon measures. In establishing such modified mean-value inequalities, we follow the line of arguments of \cite{BonLan} (see also \cite{FG} and references therein), where similar results are obtained in Euclidean space for general second-order elliptic operators.

Note that mean-value inequalities, involving Green functions, in the context of general strongly nonparabolic Riemannian manifolds, have also been first proved in \cite{N}. However, such inequalities are established for smooth functions, although they can be weakened to hold for Lipschitz functions (see Remark 2.4 in \cite{N}), a class of functions which is not sufficient for our purposes.

%Besides, it seems not immediate to deduce mean-value inequalities in the sense of the present paper, for superharmonic (or subharmonic) functions, from the results of \cite[Theorem %2.3]{N}.

We remark that the above mentioned results in potential analysis will be crucial also in the proof of uniqueness. In fact, by adapting to the present
setting the general ``duality method'' (see \cite{Pierre}), we consider the problem satisfied by the difference of the potentials of any two
solutions taking on the same initial measure, and the corresponding dual one.

Let us also mention that, in a different framework, the use of Green functions in connection with the porous medium equation has recently been
performed in \cite{BV}, to obtain certain sharp priori estimates.

\smallskip
From a general viewpoint, the fact that we are considering non-positively curved Riemannian manifolds implies relevant differences with respect to
the Euclidean space, which is a particular case. In fact, in view of our hypotheses on sectional curvatures, we could have different properties for
the Green function and for the growth of the volume of balls (which can be exponential with respect to the radius, as in $\mathbb H^N$, or even faster).
Therefore, we need to use more delicate cut-off arguments which exploit crucial integrability properties of the Green function. In addition, our
assumption concerning the bound from below for the Ricci curvature (see \eqref{H}-(ii) below) is essential since it ensures conservation of mass for the aforementioned dual problem, a key tool in the uniqueness proof. It is not surprising that such bound on the Ricci curvature is essential for uniqueness, since it implies \emph{stochastic completeness} of $M$, which is equivalent to uniqueness of bounded solutions in the linear case (i.e.~for the heat equation), such a condition being sharp for stochastic completeness, see \cite{Grig}.

The potential techniques we exploit allow us to establish an identity which expresses the Green function in terms of the time integral of the
solution of problem \eqref{e64} with $\mu=\delta_{x_0}$ for any $x_0\in M$. Such formula holds, indeed, on general Riemannian manifolds, without
specific assumptions on its curvatures. In particular, it seems to be new, to our knowledge, even in the Euclidean framework. On the other hand, it extends to the
nonlinear case a well-known formula, which relates the Green function to the heat kernel. This result implies in particular that a manifold is nonparabolic
if and only if the Barenblatt solution is integrable in time. We are not aware of previous results connecting nonparabolicity of a manifold to properties of nonlinear evolutions of the kind studied here.

\smallskip

The paper is organized as follows. In Section \ref{Stat} we state the main results and we give the precise definition of solution to problem
\eqref{e64}. In Section \ref{RG} we recall some useful preliminaries in Riemannian geometry and basic facts concerning analysis on manifolds. Then in
Section \ref{Pot} we obtain some results in potential analysis on manifolds; although they are mostly used in the subsequent sections, they also have
an independent interest. Existence of solutions is shown in Section \ref{sec: exi-proof}, along with the integral identity involving the Green function. Finally, in Section \ref{proof-uniq} we prove both uniqueness of solutions and the results concerning the initial trace.
\medskip

We thank the referees of this paper for their careful reading of the original version of this manuscript, and for several comments which allowed us to strengthen some of our results.

\begin{oss}
\rm Our results are presented for simplicity in the case of Cartan-Hadamard manifolds of dimension $ N \ge 3 $. However, they hold with identical proofs under the following more general assumptions:
\begin{itemize}
\item $M$ is nonparabolic, complete and noncompact. Moreover, it supports the Sobolev-type inequality $\| f \|_{2\sigma}\le C\,\|\nabla f \|_2$  for some $ \sigma>1 $, $C>0$ and all $ f \in C^\infty_c(M)$;
\item $G(x,y)\to0$ as dist\,$(x,y)\to+\infty$, uniformly in $x\in K$, given any compact set $K\subset M$;
\item there exists $ o \in M $ such that $ x \mapsto \operatorname{dist}(x,o) $ is $C^2(M\setminus B)$ for some neighbourhood $B$ of $o$ and $|\Delta_x\operatorname{dist}(x,o)|\le c \operatorname{dist}(x,o) $ for a suitable constant $c>0$ and $ \operatorname{dist}(x,o) $ large (not necessary for existence).
\end{itemize}
Note that the above properties are fulfilled if $M$ is, for instance, a nonparabolic, complete and noncompact Riemannian manifold of dimension $ N \ge 3 $ possessing a pole $o$ such that $\operatorname{cut}(o)=\emptyset$ (i.e.~the \emph{cut locus} at $o$ is empty) and assumption \eqref{H}-(ii) below holds, with nonpositive sectional curvatures outside a compact set.
\end{oss}

\section{Statements of the main results}\label{Stat}

We consider {\it Cartan-Hadamard} manifolds, i.e.~complete, noncompact, simply connected Riemannian manifolds with nonpositive
sectional curvatures. Observe that (see e.g.~\cite{Grig, Grig3}) on Cartan-Hadamard manifolds the {\it cut locus} of any point $o$ is empty.
So, for any $x\in M\setminus \{o\}$, one can define its {\it polar coordinates} with pole at $o$. Namely, for any point $x\in M\setminus \{o\}$ there
exists a polar radius $\rho(x) := d(x, o)$ and a polar angle $\theta\in \mathbb S^{N-1}$ such that the geodesics from $o$ to $x$ starts
at $o$ with direction $\theta$ in the tangent space $T_oM$ (and has length $\rho$). Since we can identify $T_o M$ with $\mathbb R^{N}$, $\theta$ can be regarded as a point of $\mathbb S^{N-1}:=\{x\in \mathbb R^{N}:\,|x|=1\}.$

The Riemannian metric in $M\setminus \{o\}$ in polar coordinates reads
\[ds^2 = d\rho^2+A_{ij}(\rho, \theta)d\theta^i d\theta^j,\]
where $(\theta^1, \ldots, \theta^{N-1})$ are coordinates in $\mathbb S^{N-1}$ and $(A_{ij})$ is a positive definite matrix.

Let
$$ \mathcal A:=\left\{f\in C^\infty((0,\infty))\cap C^1([0,\infty)): \, f'(0)=1, \, f(0)=0, \, f>0 \ \textrm{in}\;\, (0,\infty)\right\} . $$
We say that $M$ is a {\it spherically symmetric manifold} or a {\it model manifold} if the Riemannian metric is given by
\begin{equation*}\label{e2}
ds^2 = d\rho^2+\psi^2(\rho)d\theta^2,
\end{equation*}
where $d\theta^2$ is the standard metric on $\mathbb S^{N-1}$, and $\psi\in \mathcal A$. In this case, we write $M\equiv M_\psi$; furthermore, we have $\sqrt A(\rho,\theta)=\psi^{N-1}(\rho) \, \eta(\theta) $ (for a suitable function $\eta$).

Note that for $\psi(r)=r$, $M=\mathbb R^N$, while for $\psi(r)=\sinh r$, $M$ is the $N$-dimensional hyperbolic space $\mathbb H^N$.

To most of our purposes, we shall assume that the following hypothesis is satisfied, where we denote by $\operatorname{Ric}_o(x)$ the {\it radial} Ricci curvature at $x$ w.r.t.~a given pole $ o \in M $ (see
Section \ref{RG} for some more detail):
\begin{equation}\tag{{\it H}}\label{H}
\begin{cases}
\textrm{(i)} & M \ \textrm{is a Cartan-Hadamard manifold of dimension $N\ge3$} \, ; \\
\textrm{(ii)} & \textrm{Ric}_o(x)\geq -C(1+\operatorname{dist}(x,o)^2) \ \textrm{for some }C\ge0 \, .
%\;\;\, \textrm{there exists}\;\, \psi\in \mathcal A\,, \hat C>0\,\;\;
%\textrm{such that}\;\frac{\psi'(\rho)}{\psi(\rho)}\leq  \hat C \rho \;\;\textrm{for all}\;\; \rho \ge  1\,, \\
%\qquad \,\, \textrm{Ric}_{o}(x)\geq - (N-1) \frac{\psi''(\rho)}{\psi(\rho)} \quad \textrm{for all}\;\; x\equiv (\rho, \theta)\in M\,.
\end{cases}
\end{equation}

For instance, assumption \eqref{H} is satisfied if $M=\mathbb H^N$, and e.g.~on Riemannian models (see Section \ref{RG} below) associated with functions $\psi$ such that $ \psi^{\prime\prime}\ge 0 $ and $\psi(r)=e^{r^\alpha}$ for any $r>0$ large enough, for some $0 <\alpha\leq 2$.

Note that by \eqref{H} the Green function $G(x,y)>0$ on $M$ exists finite for all $x\neq y$ (see again Section \ref{RG}), i.e.~$M$ is {\it
nonparabolic}.

\medskip

Let $\mathcal M^+(M)$ be the set of positive Radon measures on $M$, with $\mathcal M_F^+(M):=\{\mu\in \mathcal M^+(M): \, \mu(M)<\infty \}$. We shall also denote by $ \mathcal M_F(M) $ the space of \emph{signed} finite measures on $M$, namely measures that can be written as the difference between two
elements of $\mathcal M_F^+(M)$.
%Since $H^1(M)=H^1_0(M)$ for any complete Riemannian manifold $M$ (see \cite[Corollary 2.6]{Stric}), in the definition of weak solution, we can ask $
%u \in L^\infty((0,\infty);L^1(M)) \cap L^\infty(M\times(\tau,\infty)) $ and $ \nabla\left(u^m\right) \in L^2_{\rm loc}((0,\infty);L^2(M)) $, instead
%of explicitly requiring that $u^m \in L^2_{\rm loc}((0,\infty);\dot{H}^1(M)) $.

\begin{den}\label{defsol1}
Given a measure $\mu\in \mathcal M_F(M)$, we say that a function
$u$ is a {\em weak solution} to problem \eqref{e64} if
\begin{equation}\label{e65}
u\in L^\infty((0,\infty);L^1(M)) \cap
L^\infty(M\times(\tau,\infty)) \quad \textrm{for all}\;\,
\tau>0\,,
\end{equation}
\begin{equation}\label{e66}
\nabla\!\left(u^m\right) \in L^2((\tau,\infty);L^2(M)) \quad \textrm{for all}\;\,
\tau>0\,,
\end{equation}
\begin{equation}\label{e67}
-\int_0^\infty\int_M u(x,t)\varphi_t(x,t) \, d\mathcal V(x) dt +
\int_0^\infty\int_M \langle \nabla (u^m)(x,t), \nabla
\varphi(x,t)\rangle \, d\mathcal V(x) dt \,=\,0
\end{equation}
for any $\varphi\in C^\infty_c(M\times (0,\infty))$, and
\begin{equation}\label{e68}
\lim_{t\to 0} \int_M u(x,t)\phi(x) \, d\mathcal
V(x)\,=\,\int_{M}\phi(x) \, d\mu(x)\,\quad \textrm{for any}\;\;
\phi\in C_b(M):=C(M)\cap L^\infty(M)\,.
\end{equation}
\end{den}
%
%\begin{den}\label{defsol2}
%We say that $u$ is a {\em strong solution} to problem \eqref{e64}
%if it is a weak solution and $u_t\in L^\infty((\tau, \infty);
%L^1(M))$ for all $\tau>0$\,.
%\end{den}
In fact we shall prove (see Proposition \ref{pro: cons} below) that weak solutions in the sense of Definition \ref{defsol1} are continuous curves in $ L^1(M) $.

\subsection{Existence and uniqueness results} Concerning existence of solutions starting from an initial finite (not necessarily positive)
Radon measure, that are allowed to change sign, we prove the next result. The strategy of the proof is similar to the one of \cite[Theorem
3.2]{GMPmu}, but however new ideas are necessary, since the method of proof of \cite[Theorem 3.2]{GMPmu} works only in the case of positive Radon
measures.

\begin{thm}\label{thmexi}
Let assumption \eqref{H}-\textrm{(i)} be satisfied. Let $\mu\in \mathcal M_F(M)$. Then there exists a weak solution $u$ to problem \eqref{e64}, which conserves the quantity
\begin{equation}\label{e69}
\mu(M)=\int_M u(x,t) \, d\mathcal V(x)\quad \textrm{for all}\;\,
t>0
\end{equation}
and satisfies the smoothing effect
\begin{equation}\label{e70}
\|u(t)\|_\infty \leq K t^{-\alpha}|\mu|(M)^{\beta} \quad \textrm{for
all}\,\, t>0\,,
\end{equation}
where $K$ is a positive constant which only depends on $m , N$ and
\begin{equation}\label{e71-pre}
\alpha:= \frac{N}{(m-1) N+2}\,,\quad \beta:=\frac{2}{(m-1) N+2}\,.
\end{equation}
%\begin{equation}\label{e71}
%\alpha:= \frac{N}{(m-1) N+2}\,,\quad \beta:=\frac{2}{(m-1) N+2}\,.
%\end{equation}
%In particular, $u(t)\in L^p(M)$ for all $t>0$ and $p\in [1,
%\infty]$. Furthermore, the following {\em energy estimate}
%holds for all $ 0 < t_1 < t_2 $:
%\begin{equation*}\label{e72}
%\int_{t_1}^{t_2}\int_M |\nabla(u^m)(x,t)|^2 \, d\mathcal V(x)dt +\frac
%1{m+1}\int_M |u(x, t_2)|^{m+1} \, d\mathcal V(x) = \frac 1{m+1}\int_M
%|u(x, t_1)|^{m+1} \, d\mathcal V(x)\,.
%\end{equation*}
%The same statements hold if $N=2$ provided \eqref{H2} is also satisfied and $N$ is replaced by $N_2$ given in \eqref{e301}.
\end{thm}

Note that this result can be extended, apart from the conservation of mass, to the case of the supercritical fast diffusion case $m\in((N-2)/N,1)$, see Remark \ref{supercritical} below.

\smallskip

Concerning uniqueness of nonnegative solutions, taking on the same initial positive finite measure, we show the following result. The ideas of the proof bear some similarities with the one given in \cite[Section 5]{GMPmu}, being based on the duality method of Pierre (see
\cite{Pierre}), but substantial differences occur, mainly due to the very different properties of the heat semigroup and the Green function on $M$,
related to our assumptions on sectional curvatures.

\begin{thm}\label{thmuni}
Let assumption \eqref{H} be satisfied. Let $u_1$ and $u_2$ be two nonnegative weak solutions to problem \eqref{e64}. Suppose that their initial
datum, in the sense of \eqref{e68}, is the same $\mu\in \mathcal M^+_F(M)$. Then $u_1=u_2$\,.
%The same statement holds if $N=2$ provided \eqref{H2} is also satisfied.
\end{thm}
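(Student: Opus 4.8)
The plan is to follow Pierre's duality method, adapted to the Riemannian setting, exploiting the potential-theoretic facts developed earlier in the paper. Set $w := u_1 - u_2$ and define, for each fixed time, the Green potential $U(t) := \mathcal{G}[w(t)]$, i.e.\ $U(x,t) = \int_M G(x,y)\, w(y,t)\, d\mathcal{V}(y)$; since both solutions conserve mass and share the same initial measure, $\int_M w(y,t)\, d\mathcal{V}(y) = 0$ for all $t>0$, which is what makes the potential well-behaved. Using the weak formulation \eqref{e67} together with the fact that $(-\Delta) G(x,\cdot) = \delta_x$, one derives that $U$ solves, in an appropriate weak sense, $U_t = -(u_1^m - u_2^m)$, so that for $0 < s < t$ one has the identity $U(t) - U(s) = -\int_s^t (u_1^m - u_2^m)(\tau)\, d\tau$. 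The crucial point is to let $s \to 0^+$: here the initial-trace information (the two solutions take on the same $\mu$ in the sense of \eqref{e68}) and the potential-analytic results — the modified mean-value inequality and the continuity/integrability properties of potentials of finite measures — are used to show $U(s) \to 0$ in a suitable topology, so that $U(t) = -\int_0^t (u_1^m - u_2^m)(\tau)\, d\tau$.

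Next I would run the standard duality test-function argument. Fix $T>0$ and a nonnegative $\vartheta \in C_c^\infty(M)$, and let $\varphi$ solve the backward dual problem $\varphi_t + a(x,t)\, \Delta \varphi = 0$ on $M \times (0,T)$ with terminal datum $\varphi(T) = \vartheta$, where $a(x,t)$ is (a regularized version of) the quotient $\frac{u_1^m - u_2^m}{u_1 - u_2}$, which is nonnegative and bounded on $M\times(\tau,T)$ thanks to the $L^\infty$-smoothing estimate \eqref{e70}. Testing the equation for $w$ against $\varphi$ (equivalently, testing the equation for $U$), integrating by parts, and using the dual equation, the interior terms cancel and one is left with $\int_M w(x,T)\, \vartheta(x)\, d\mathcal{V}(x) = 0$ up to error terms coming from (i) the regularization of the coefficient $a$, (ii) the cut-off needed to localize on the noncompact manifold $M$, and (iii) the behaviour as $\tau \to 0^+$. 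Since $\vartheta$ and $T$ are arbitrary and $w(T)$ is independent of the sign of $\vartheta$, this forces $w(T) \equiv 0$, i.e.\ $u_1 = u_2$.

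Controlling the error terms is where the bulk of the work lies, and the genuinely new difficulties relative to the Euclidean case concentrate in two places. First, the cut-off argument: on a Cartan-Hadamard manifold the volume of balls may grow exponentially, so naive cut-offs do not yield vanishing boundary contributions; one must instead use cut-offs built from the Green function (or from $\operatorname{dist}(x,o)$ together with the bound $|\Delta \operatorname{dist}(x,o)| \le c\,\operatorname{dist}(x,o)$ from hypothesis \eqref{H}), and crucially invoke the integrability properties of $G$ established in the potential-analysis section to absorb the extra terms. This is also exactly where assumption \eqref{H}-(ii) enters through conservation of mass / stochastic completeness, guaranteeing that the dual solution $\varphi$ stays bounded and does not leak mass at infinity. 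Second, the solution of the dual problem must be produced with enough regularity and the right a priori bounds (nonnegativity, $L^\infty$ bound, control on $\nabla\varphi$), which requires approximating $a$ from below by smooth, strictly positive, bounded coefficients and passing to the limit — here again one needs the smoothing effect \eqref{e70} to know that $u_1, u_2$ are bounded away from $t=0$, so that $a$ is a bona fide bounded coefficient on $M\times(\tau,T)$.

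I expect the main obstacle to be precisely the combination of the degeneracy of the coefficient $a$ near $t=0$ (handled by working on $(\tau,T)$ and then sending $\tau\to 0^+$ using the trace condition and the continuity of potentials) with the non-Euclidean geometry at spatial infinity (handled by Green-function-weighted cut-offs and the Ricci lower bound). Everything else — the derivation of the potential equation $U_t = -(u_1^m-u_2^m)$, the backward existence theory for the linear dual equation, and the final contradiction — is, modulo careful bookkeeping, routine once these two technical points are secured.
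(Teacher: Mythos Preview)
Your proposal correctly identifies Pierre's duality method as the governing strategy and accurately pinpoints the two genuine non-Euclidean difficulties (cut-offs under possible exponential volume growth, and the role of the Ricci lower bound via stochastic completeness/mass conservation). However, there is a real gap in how you handle the limit $t\to 0^+$, and it hides precisely the device that makes the paper's proof work.

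You work with $U(t)=\mathcal G_1(t)-\mathcal G_2(t)$ and assert that ``$U(s)\to 0$ in a suitable topology'' so that $U(t)=-\int_0^t(u_1^m-u_2^m)\,d\tau$, after which the duality argument is supposed to yield $\int_M w(T)\,\vartheta=0$. But what is actually needed at the end is not the convergence of $U(s)$ alone: after testing against the (limit of the) dual solution you must control a quantity of the form $\int_M U(x,t)\,d\nu^t(x)$ (equivalently $\int_M w(x,t)\,\varphi(x,t)\,d\mathcal V$), where the measure $\nu^t$ (or the test function $\varphi(t)$) is \emph{itself} moving as $t\downarrow 0$. Pointwise convergence $U(x,t)\to 0$ together with the bound $|U(t)|\le \mathcal G^\mu$ does not suffice, because dominated convergence requires a fixed measure; and you have no a~priori $C_b$-convergence of $\varphi(t)$ since the coefficient $a$ degenerates as $t\to 0$. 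Likewise, the identity $U(t)=-\int_0^t(u_1^m-u_2^m)\,d\tau$ presumes $u_i^m\in L^1((0,T);L^1_{\rm loc}(M))$, which is \emph{not} part of Definition~\ref{defsol1} and is not known for arbitrary weak solutions (the smoothing estimate \eqref{e70} is established only for the constructed solution).

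The paper resolves this by a \emph{time shift}: one sets $W(x,t)=\mathcal G_2(x,t+h)-\mathcal G_1(x,t)$ for fixed $h>0$ and aims only at the one-sided inequality $\int_M W(T)\,\psi\le 0$. The point is that $W(x,t)\le \mathcal G_2(x,h)-\mathcal G_1(x,t_0)$ for $t<t_0$, a bound by a \emph{fixed} function of $x$; this decouples the two limits, so one may first let $t\to 0$ (passing $\nu^t\rightharpoonup\nu_0$) and then let $t_0\to 0$ using the monotone convergence $\mathcal G_1(t_0)\uparrow \mathcal G^\mu$ furnished by Lemma~\ref{Lemma11}. One obtains $\int_M W(T)\,\psi\le \int_M[\mathcal G_2(h)-\mathcal G^\mu]\,d\nu_0\le 0$, hence $\mathcal G_2(T+h)\le \mathcal G_1(T)$; swapping roles and letting $h\to 0$ gives $\mathcal G_1=\mathcal G_2$. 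A secondary difference is that the paper writes the dual in the form $\varphi_t=-\Delta[(a+\varepsilon)\varphi]$ (adjoint to $W_t=a\,\Delta W$), for which linear Markov-semigroup theory delivers nonnegativity and exact $L^1$-mass conservation of $\varphi_{n,\varepsilon}$; your non-divergence form $\varphi_t+a\,\Delta\varphi=0$ is the dual for pairing with $w$ directly, but the available a~priori bounds are then $L^\infty$-type and do not mesh as cleanly with the potential estimates used to pass to the limit. In short: keep your outline, but insert Pierre's time shift $h>0$ and work with the potential difference $W$ and its divergence-form dual; without that device the $t\to 0$ step, as you have written it, does not close.
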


Our final result concerns the existence and uniqueness of an initial trace for solutions to the differential equation in problem \eqref{e64}.

\begin{thm}\label{thmtracce}
Let assumption \eqref{H} be satisfied. Let $u$ be a weak solution of the differential equation in problem \eqref{e64}, in the sense that it satisfies \eqref{e65}--\eqref{e67}.
% Suppose in addition that $u^m\in L^1((0,T); L_{\rm loc}^1(M))$ for some $T>0$.
Then there exists $\mu \in \mathcal M_F(M)$ such that \eqref{e68} is satisfied for any $\phi\in C_c(M)$ or for $ \phi $ equal to a constant.

Under the additional assumption that $u\geq 0$, then the conclusion holds for any $\phi\in C_b(M)$, for some $\mu \in \mathcal M_F^+(M)$.
%The same statement holds if $N=2$ provided \eqref{H2} is also satisfied.
\end{thm}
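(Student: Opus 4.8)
The plan is to construct the initial trace $\mu$ as a weak-$*$ limit of the measures $u(\cdot,t)\,d\mathcal V$ as $t\to0^+$, using the weak formulation \eqref{e67} to control the oscillation of $t\mapsto\int_M u(x,t)\phi(x)\,d\mathcal V(x)$ for fixed test functions $\phi$. First I would fix $\phi\in C_c(M)$, choose a cut-off in time, and plug into \eqref{e67} a test function of the form $\varphi(x,t)=\phi(x)\,\zeta(t)$ (after a standard mollification, since $\phi$ need not be smooth — here one uses that $u^m\in L^1_{\mathrm{loc}}$ in space so that $\Delta(\phi\ast\eta_\varepsilon)$ pairs against $u^m$). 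This yields, for $0<t_1<t_2<T$,
\begin{equation*}
\left|\int_M u(x,t_2)\phi(x)\,d\mathcal V(x)-\int_M u(x,t_1)\phi(x)\,d\mathcal V(x)\right|\le \|\Delta\phi\|_\infty\int_{t_1}^{t_2}\!\!\int_{\operatorname{supp}\phi}|u^m(x,t)|\,d\mathcal V(x)\,dt,
\end{equation*}
and the hypothesis $u^m\in L^1((0,T);L^1_{\mathrm{loc}}(M))$ makes the right-hand side tend to $0$ as $t_1,t_2\to0^+$. Hence $t\mapsto\int_M u(x,t)\phi\,d\mathcal V$ is Cauchy as $t\to0^+$; call the limit $L(\phi)$. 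Since $u\in L^\infty((0,\infty);L^1(M))$, $|L(\phi)|\le(\sup_{t}\|u(t)\|_1)\,\|\phi\|_\infty$, so $L$ extends to a bounded linear functional on $C_0(M)$, i.e.\ (Riesz representation) there is $\mu\in\mathcal M_F(M)$ with $L(\phi)=\int_M\phi\,d\mu$ for all $\phi\in C_c(M)$. The constant-$\phi$ case is the conservation-type statement: taking $\phi\equiv1$ is not directly allowed in \eqref{e67}, so one instead uses the cut-off functions $\xi_R$ built from $\operatorname{dist}(\cdot,o)$ (available by assumption \eqref{H}, with $|\Delta\xi_R|$ controlled as in Section \ref{RG}), shows $\int_M u(x,t)\xi_R\,d\mathcal V$ converges as $t\to0$ uniformly in $R$ using the same $L^1_{\mathrm{loc}}$ bound on $u^m$ together with the Ricci lower bound to dominate $\int|u^m||\Delta\xi_R|$, and then lets $R\to\infty$.

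For the second part, assume $u\ge0$. Now $t\mapsto\int_M u(x,t)\phi\,d\mathcal V$ is, for $\phi\ge0$, \emph{almost monotone}: the same computation shows its increments are controlled by $\int_{t_1}^{t_2}\!\!\int u^m|\Delta\phi|$, but without the a priori $L^1_{\mathrm{loc}}$ assumption on $u^m$ we cannot yet say this is small. Instead I would use the smoothing-type and local energy information differently: test \eqref{e67} against $\phi\zeta$ with $\phi$ a fixed nonnegative compactly supported smooth function and integrate to get that $t\mapsto\int u(t)\phi$ has bounded variation on $(\tau,T)$ for each $\tau>0$ with a bound involving $\int_\tau^T\!\!\int u^m|\Delta\phi|$; combined with $u\ge0$ this time-integral of $u^m$ near $t=0$ is controlled by the mass $\|u(t)\|_1\le\sup_t\|u(t)\|_1=:\mathcal M$ and the $L^\infty$ smoothing $\|u(t)\|_\infty\le Kt^{-\alpha}\mathcal M$ only away from $0$, so the key additional input is that for nonnegative $u$ one can integrate the equation against a cut-off to obtain $\int_{t_1}^{t_2}\!\!\int_{B_R}u^m\le C(R)$ uniformly as $t_1\to0$ — this is where nonnegativity is essential, because it lets one bound $\int u^m$ over a ball by (a constant times) $\bigl(\int u\bigr)^m$-type quantities via the Green-function potential estimates of Section \ref{Pot}, or more directly by monotonicity of the map $R\mapsto\int u(t)\xi_R$ and the Bénilan–Crandall / Aronson–type pointwise bound $u_t\ge -u/((m-1)t)$ which for nonnegative solutions forces $u(t)\ge c\,t^{1/(m-1)}$-controlled behavior making $\int_0^T\!\!\int_{B_R}u^m$ finite automatically. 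Having thereby recovered the local $L^1$ integrability of $u^m$ near $t=0$ from $u\ge0$, one repeats the argument of the first part to get $L(\phi)=\int\phi\,d\mu$ for all $\phi\in C_c(M)$ with $\mu\in\mathcal M^+_F(M)$ (positivity of $\mu$ being immediate from $u\ge0$ and $\phi\ge0\Rightarrow L(\phi)\ge0$), and then upgrades from $C_c(M)$ to $C_b(M)$ by a cut-off/tightness argument: $\mu(M\setminus B_R)=\lim_{t\to0}\int_{M\setminus B_R}u(t)\le\mathcal M$ and the increments $\bigl|\int u(t)\xi_R-\int u(t)\bigr|$ are small uniformly in $t$ small, which gives $\sup_{t<\delta}\int_{M\setminus B_R}u(t)\to0$ as $R\to\infty$, i.e.\ no mass escapes to infinity, so $\int u(t)\phi\to\int\phi\,d\mu$ for every bounded continuous $\phi$.

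The main obstacle is the second part's recovery of near-$t=0$ integrability of $u^m$ on compact sets \emph{without} assuming it: one must leverage $u\ge0$ (and the mass bound plus the structure of the porous medium equation) to prevent the solution from having a non-integrable singularity in time near $0$ on bounded regions. The cleanest route is the Aronson–Bénilan inequality $\Delta(u^m)\ge -u/((m-1)t)$ in the sense of distributions, valid for nonnegative solutions, which when tested against a nonnegative cut-off $\xi_R$ yields $\int_{t_1}^{t_2}\!\!\int u^m\,\Delta\xi_R\ge -\tfrac1{m-1}\int_{t_1}^{t_2}\tfrac1t\int u\,\xi_R\,dt$; this does not by itself bound $\int u^m$ from above, so in practice one should instead combine it with the time-derivative bound to show $t\mapsto t^{1/(m-1)}\int u(t)\xi_R$ is monotone, extract a finite limit as $t\to0$, and feed this back — and it is precisely verifying that this machinery transfers verbatim to the Cartan–Hadamard setting (where the cut-offs $\xi_R$ carry the curvature-dependent Laplacian bounds from \eqref{H}) that requires care, though no genuinely new idea beyond what Section \ref{RG} and Section \ref{Pot} already provide.
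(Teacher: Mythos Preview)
Your treatment of the first part (signed $u$ with $u^m\in L^1((0,T);L^1_{\rm loc}(M))$) is essentially correct and close in spirit to the paper's: you test the equation directly against $\phi(x)\zeta(t)$ to show $t\mapsto\int_M u(t)\phi$ is Cauchy, whereas the paper routes through the potential $\mathcal G(t)$ and shows it is Cauchy in $L^1_{\rm loc}(M)$; both arguments ultimately rest on the same hypothesis and the same identity.

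The second part, however, has a genuine gap. You try to \emph{recover} the estimate $u^m\in L^1((0,T);L^1_{\rm loc}(M))$ from $u\ge 0$ via Aronson--B\'enilan-type information, but this does not go through:
\begin{itemize}
\item The inequality $u_t\ge -u/((m-1)t)$ is proved for solutions obtained by approximation, not for an arbitrary weak solution in the sense of Definition~\ref{defsol1}; you cannot invoke it as an a priori fact about $u$.
\item Even if you grant it, your own computation shows it yields a \emph{lower} bound on $\int u^m\,\Delta\xi_R$, not the upper bound on $\int_{B_R}u^m$ that you need. The monotonicity of $t^{1/(m-1)}u(x,t)$ only gives $u(x,t)\le (t_0/t)^{1/(m-1)}u(x,t_0)$, and $\int_0^T t^{-m/(m-1)}\,dt$ diverges, so this is too singular.
\item The smoothing estimate $\|u(t)\|_\infty\le K t^{-\alpha}$ (with $\alpha(m-1)<1$) \emph{would} suffice, but in the paper it is proved only for the constructed solution of Theorem~\ref{thmexi}, not for a generic weak solution.
\end{itemize}
Your tightness argument for the upgrade from $C_c(M)$ to $C_b(M)$ inherits this problem, since it presupposes the $C_c$ trace already established.

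The paper avoids all of this by exploiting the potential. From the identity
\[
\int_M \mathcal G(x,t_2)\phi\,d\mathcal V-\int_M \mathcal G(x,t_1)\phi\,d\mathcal V=-\int_{t_1}^{t_2}\!\!\int_M u^m\phi\,d\mathcal V\,dt
\]
and $u\ge 0$ one deduces that $t\mapsto \mathcal G(x,t)$ is nonincreasing for every $x$; hence a pointwise limit $\mathcal G_0$ exists as $t\to 0$ with no integrability of $u^m$ near $t=0$ required. Proposition~\ref{teo10} (the key potential-theoretic result of Section~\ref{Pot}) then identifies $\mathcal G_0=\mathcal G^\mu$ for the vague limit $\mu$ of $u(t_n)$, and Lemma~\ref{Lemma-2} gives uniqueness of $\mu$. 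Finally, the upgrade to $\phi\in C_b(M)$ is obtained by a bootstrap: take the solution $v$ provided by Theorem~\ref{thmexi} with initial datum $\mu$, apply the uniqueness Theorem~\ref{thmuni} (via Remark~\ref{ossuni}, which only needs the $C_c$ trace) to conclude $u=v$, and inherit the full $C_b$-convergence from $v$. This is the idea you are missing in the nonnegative case.
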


\begin{oss}\rm
We point out that our existence and uniqueness results also hold in the linear case, i.e.~for $m=1$. To the best of our knowledge no results are available in the literature if the initial condition is a measure.
% However, we can expect that by using the properties of the heat kernel our geometric assumptions can be relaxed for existence if $m=1$.
Note that for the heat equation the explosion rate $-\operatorname{dist}(x,o)^2$ for the Ricci curvature is a sharp condition for uniqueness as shown in \cite{IM}. For several other sharp results in the linear case see \cite{M1, I, I2, M2}
%In our method of proof for uniqueness assumption \eqref{H} is crucial as well, since is permits to write rigorously the potential equation \eqref{e130a}. This is obtained formally by inverting the Laplace operator.
% Gia spiegato nel remark sull'unicita
\end{oss}

\subsection{Superharmonic functions and modified mean-value properties}\label{Pot-MVP} In this section we establish a modified version of the mean-value inequality
for distributional superharmonic functions. It should be stressed that these results, although being of independent interest, will be essential in the
proofs of the potential theoretic results of Section \ref{Pot-meas}, which are in turn fundamental in the proof of uniqueness for solutions to
problem \eqref{e64}.

Unless otherwise stated, we assume here that $M$ is a nonparabolic manifold of dimension $N\ge2$, with $G$ being the minimal positive Green function of $M$.

Let $u:M \to (-\infty,+\infty]$ be a lower semicontinuous function (l.s.c.) function. For $r>0$ we define
\begin{equation}\label{e25}
\mathfrak{m}_r[u](x):= \int_{\left\{y\in M\,:\; G(x,y)=\frac 1 r\right\}} u(y) \big|\nabla_y G(x,y)\big|dS(y) \quad \textrm{for all}\;\, x\in M\,,
\end{equation}
where $dS$ is the $(N-1)$-dimensional Hausdorff measure on $M$\,. Moreover, for any $\alpha>0$, we set
\begin{equation}\label{e26}
\mathfrak{M}_r[u](x):= \frac{\alpha+1}{r^{\alpha+1}}\int_0^r \xi^\alpha \mathfrak{m}_\xi[u](x) \, d\xi \quad \textrm{for all}\;\, x\in M\,.
\end{equation}

Let us recall the well-known smooth coarea formula (see e.g.~\cite[Exercise III.12]{Chavel}). Let $\phi: M\to \mathbb R$ be of class $C^\infty(M)$ with $|\nabla \phi| \in L^\infty(M) $, and let $f:M \to \mathbb R$ be either nonnegative or in $L^1(M)$. Then
\[ \int_M f \, |\nabla \phi| \, d\mathcal V\,=\,\int_\mathbb R d\xi  \, \int_{ \left\{ y \in \phi^{-1}(\xi) \right\} } f\big(\phi^{-1}(\xi)\big) \, d S(y) \, . \]
By approximation it is not difficult to show that such formula is also true with the choices $\phi(y)=[G(x,y)]^{-1}$ and $  f(y) = u(y)[ G(x,y)]^{-\alpha} \chi_{\{ y \in M : \, G(x,y) > r \}} $, for each fixed $x\in M$. So, one can rewrite \eqref{e26} as
\begin{equation}\label{e26-bis}
\mathfrak{M}_r[u](x):= \frac{\alpha+1}{r^{\alpha+1}} \int_{\{y \in M: G(x,y)>\frac{1}{r}\}} u(y) [G(x,y)]^{-\alpha-2} \left| \nabla_y G(x,y) \right|^2 d\mathcal{V}(y)  \quad \textrm{for all}\;\, x\in M\,.
\end{equation}

\begin{den}
We say that a l.s.c.~function $u:M\to (-\infty,+\infty]$ is $\mathfrak{m}-$continuous if
\begin{equation*}\label{e27}
u(x)\,=\, \lim_{r\to 0} \mathfrak{m}_r[u](x) \quad \textrm{for all}\;\; x\in M\,.
\end{equation*}
Similarly, we say that $u$ is $\mathfrak{M}-$continuous if
\begin{equation*}\label{e28}
u(x)\,=\, \lim_{r\to 0} \mathfrak{M}_r[u](x) \quad \textrm{for all}\;\; x\in M\,.
\end{equation*}
\end{den}
We point out that if $u$ is continuous, then it is both $\mathfrak m-$continuous and $\mathfrak M-$continuous (see the proof of Lemma \ref{lemma9}).
Moreover, in general, if $u$ is $\mathfrak m-$continuous, it is also $\mathfrak M-$continuous.

\begin{den}\label{defsh}
We say that $u\in L^1_{\textrm{loc}}(M)$ is  {\em superharmonic} (resp. {\em subharmonic}) if
\begin{equation*}\label{e20}
\int_M u(x) \Delta \phi(x) \, d\mathcal V(x) \, \leq (\geq ) \, 0 \quad \textrm{for any}\;\; \phi\in C^\infty_c(M) \, , \ \phi\geq 0\,.
\end{equation*}
Moreover, $u\in L^1_{\textrm{loc}}(M)$ is {\em harmonic} if it is both {\em subharmonic} and {\em superharmonic}.
\end{den}

\begin{den}\label{defsh-2}
We say that a l.s.c.~function $u:M\to (-\infty,+\infty]$ is {\em $\mathfrak{m}-$superharmonic} if
\begin{equation*}\label{e29}
\mathfrak{m}_r[u](x) \leq u(x) \quad \textrm{for all}\;\, x\in M,\ \textrm{for a.e.}\ r>0\,.
\end{equation*}
Similarly, we say that $u$ is {\em $\mathfrak{M}-$superharmonic} if
\begin{equation*}\label{e30}
\mathfrak{M}_r[u](x) \leq u(x) \quad \textrm{for all}\;\, x\in M, \, r>0\,.
\end{equation*}

Furthermore, we say that $u$ is {\em $\mathfrak{m}-$subharmonic} if $-u$ is {\em $\mathfrak{m}$-super\-har\-monic}, while $u$ is {\em
$\mathfrak{M}-$sub\-har\-monic} if $-u$ {\em $\mathfrak{M}-$superharmonic}\,.

Finally, we say that $u$ is {\em $\mathfrak{m}-$harmonic} if it is both {\em $\mathfrak{m}-$subharmonic} and {\em $\mathfrak{m}-$superharmonic}\,,
while $u$ is {\em $\mathfrak{M}-$harmonic} if it is both {\em $\mathfrak{M}-$subharmonic} and {\em $\mathfrak{M}-$superharmonic}\,.
\end{den}

We have the following result, which will be proved in Section \ref{MVI-p}.
\begin{thm}\label{teo5}
$(i)$ Let $u$ be $\mathfrak{M}-$continuous, l.s.c.~and superharmonic. Then $u$ is $\mathfrak{M}-$superharmonic.

$(ii)$ Let $u$ be $\mathfrak{M}-$continuous, upper semicontinuous and subharmonic. Then $u$ is $\mathfrak{M}-$subharmonic.

\end{thm}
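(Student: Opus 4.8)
The plan is to reduce the statement to a local computation involving the Green function and the coarea formula, following the classical strategy for mean-value inequalities but adapted to the ``Green ball'' geometry $\{G(x,\cdot)>1/r\}$ rather than metric balls. I would treat part $(i)$ (superharmonic case) in detail; part $(ii)$ then follows by applying $(i)$ to $-u$, since $u$ subharmonic and upper semicontinuous means $-u$ is superharmonic and lower semicontinuous, and the definitions of $\mathfrak{M}$-sub/superharmonicity are set up precisely to make this duality work verbatim.

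First I would reduce to the smooth case by mollification. Since $u\in L^1_{\mathrm{loc}}(M)$ is superharmonic in the distributional sense, for a fixed $x\in M$ I would work on a relatively compact neighbourhood and approximate $u$ by a sequence $u_\varepsilon$ of smooth functions that are still superharmonic there (convolving with a mollifier supported near the identity, which preserves the sign of $\Delta u$ as a distribution on slightly smaller sets). For smooth superharmonic $u_\varepsilon$, the key identity is obtained by applying Green's second identity on the Green ball $\Omega_\xi(x):=\{y: G(x,y)>1/\xi\}$ with the two functions $u_\varepsilon$ and $G(x,\cdot)$: since $G(x,\cdot)$ is harmonic away from $x$ and $\Omega_\xi(x)$ shrinks to $\{x\}$ as $\xi\to0$, one derives that $\xi\mapsto \mathfrak{m}_\xi[u_\varepsilon](x)$ is \emph{monotone nondecreasing} in $\xi$ (the derivative in $\xi$ equals, up to a positive factor, an integral of $-\Delta u_\varepsilon$ over $\Omega_\xi(x)$, which is $\le 0$, with the correct sign because $G$ decreases away from $x$). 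Letting $\xi\to0$ and using that $G$ has the right singularity, $\lim_{\xi\to0}\mathfrak{m}_\xi[u_\varepsilon](x)=u_\varepsilon(x)$; hence $\mathfrak{m}_\xi[u_\varepsilon](x)\le \mathfrak{m}_r[u_\varepsilon](x)$ is false in general — rather $\mathfrak{m}_r[u_\varepsilon](x)\le u_\varepsilon(x)$ for all $r>0$, i.e.\ $u_\varepsilon$ is $\mathfrak{m}$-superharmonic, and since $\mathfrak{M}_r$ is an average of the $\mathfrak{m}_\xi$ over $\xi\in(0,r)$ with the positive weight $\frac{\alpha+1}{r^{\alpha+1}}\xi^\alpha$, also $\mathfrak{M}_r[u_\varepsilon](x)\le u_\varepsilon(x)$.

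Next I would pass to the limit $\varepsilon\to0$. The delicate point is that one only has $u_\varepsilon\to u$ in $L^1_{\mathrm{loc}}$, so the boundary integrals defining $\mathfrak{m}_\xi$ need not converge. This is exactly why the statement is phrased with $\mathfrak{M}$ rather than $\mathfrak{m}$: the alternative expression for $\mathfrak{M}_r[u](x)$ via the coarea formula is a \emph{solid} integral $\frac{\alpha+1}{r^{\alpha+1}}\int_{\Omega_r(x)} u\,G(x,\cdot)^{-\alpha-2}|\nabla G(x,\cdot)|^2\,d\mathcal V$, which is continuous under $L^1_{\mathrm{loc}}$ convergence once one checks that the weight $G(x,\cdot)^{-\alpha-2}|\nabla G(x,\cdot)|^2$ is bounded on $\Omega_r(x)$ away from $x$ and integrable near $x$ (here the dimension hypothesis $N\ge2$ and the known asymptotics $G(x,y)\sim c_N\,\mathrm{dist}(x,y)^{2-N}$, $|\nabla_y G|\sim c_N'\,\mathrm{dist}(x,y)^{1-N}$ near the pole make the integrand behave like $\mathrm{dist}(x,y)^{(\alpha+1)(N-2)-2(N-1)}$ times a bounded factor — I would fix $\alpha$, or note the argument works for the relevant range, so this is locally integrable). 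Thus $\mathfrak{M}_r[u_\varepsilon](x)\to \mathfrak{M}_r[u](x)$ for every $x$ and every $r$. On the right-hand side, lower semicontinuity of $u$ together with the fact that superharmonic $L^1_{\mathrm{loc}}$ functions have a canonical l.s.c.\ representative given pointwise by $\lim_{\varepsilon}u_\varepsilon$ (or by $\liminf$ of averages) yields $u(x)\ge \limsup_\varepsilon u_\varepsilon(x)$ at the point $x$; combining, $\mathfrak{M}_r[u](x)=\lim_\varepsilon \mathfrak{M}_r[u_\varepsilon](x)\le \limsup_\varepsilon u_\varepsilon(x)\le u(x)$, which is the claim.

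The main obstacle I anticipate is the interchange of limits in the last step: one must be careful that the l.s.c.\ function $u$ in the hypothesis really agrees with (or dominates at every point) the pointwise limit of the mollified superharmonic approximants, and that the Green-ball solid integrals converge despite the non-compactness of $M$ and the possibly exponential volume growth — but since $\Omega_r(x)$ is relatively compact for each fixed $r$ (as $G(x,y)\to0$ at infinity, an assumption available in this setting) and the only genuine singularity of the integrand is the integrable one at $y=x$, this is manageable. A secondary technical point is justifying the monotonicity-in-$\xi$ computation for smooth superharmonic functions on Green balls, which requires knowing that $G(x,\cdot)$ is smooth and $|\nabla G(x,\cdot)|\neq0$ on the level set $\{G(x,\cdot)=1/\xi\}$ for a.e.\ $\xi$ (Sard's theorem), so that the coarea formula and the divergence theorem apply on $\Omega_\xi(x)$; this is where the ``for a.e.\ $r$'' in the definition of $\mathfrak{m}$-superharmonicity originates, and it is harmless after averaging to get $\mathfrak{M}$.
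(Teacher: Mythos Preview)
There is a genuine gap in the mollification step. On a general Riemannian manifold there is no group structure, and ``convolving with a mollifier'' in local coordinates does \emph{not} preserve the sign of $\Delta u$: the Laplace--Beltrami operator has variable coefficients in any chart, so convolution does not commute with it. Without $\Delta u_\varepsilon\le 0$ your Green's-identity computation on the Green balls $\Omega_\xi(x)$ simply does not give the desired monotonicity. Your second limiting ingredient --- that the given l.s.c.\ function satisfies $u(x)\ge\limsup_\varepsilon u_\varepsilon(x)$ at every point --- is equally problematic: in $\mathbb{R}^N$ this is usually proved using that $u_\varepsilon(x)$ is a ball average of $u$ together with the classical mean-value inequality for superharmonic functions, which is precisely the kind of statement you are trying to establish on $M$. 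So the argument is circular unless you supply an independent smoothing procedure on $M$ that preserves superharmonicity \emph{and} gives pointwise control at every $x$; you have not indicated one.

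The paper avoids both difficulties by never smoothing $u$. It uses the distributional inequality $\int_M u\,\Delta\phi\,d\mathcal V\le 0$ directly, with test functions of the special form $\phi(y)=\psi\bigl(1/G(x_0,y)\bigr)$ for smooth nonnegative $\psi$ on $[0,\infty)$, constant near $0$ and compactly supported. Because $G(x_0,\cdot)$ is harmonic away from $x_0$, one computes $\Delta\phi$ explicitly, and after the smooth coarea formula the inequality becomes
\[
\int_0^\infty (t^2\psi'(t))'\,\mathfrak m_t[u](x_0)\,dt\le 0.
\]
Choosing $\psi(t)=\int_t^\infty \eta(s)s^{-2}\,ds$ for arbitrary $\eta\in C^\infty_c((0,\infty))$, $\eta\ge0$, this reads $\int_0^\infty \eta'(t)\,\mathfrak m_t[u](x_0)\,dt\ge 0$, which (by a standard one-variable lemma) forces $r\mapsto\mathfrak M_r[u](x_0)$ to be nonincreasing. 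The conclusion then follows from the assumed $\mathfrak M$-continuity at $r=0$. The key idea you are missing is that the Green function itself furnishes the right family of test functions, so no approximation of $u$ is needed at all.
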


Of course, the above theorem implies that if $u$ is continuous and harmonic, then $u$ is $\mathfrak{M}-$harmonic\,, in agreement with the results of \cite{N}, which are given in principle for more regular functions.

\smallskip

We stress again that the classical mean-value formula (w.r.t.~the Riemannian measure of a ball) need not be valid, and that in principle only a mean-value inequality for nonnegative subharmonic functions holds (see \cite{LS}).

By means of minor modifications in the proof of Theorem \ref{teo5}, a local version of such results on general Riemannian manifolds (possibly
parabolic) can be obtained, without supposing that hypothesis \eqref{H} holds. In fact, we have the following.

\begin{cor}\label{cor11}
Let $\Omega\subset M$ be an open bounded subset. Let $u$ be $\mathfrak{M}-$continuous, l.s.c.~and superharmonic in $\Omega$. Then $u$ is
$\mathfrak{M}-$superharmonic in $\Omega$. Similar statements hold for subharmonic and harmonic functions.
\end{cor}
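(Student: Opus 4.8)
The plan is to rerun the proof of Theorem~\ref{teo5} almost verbatim, replacing the global minimal Green function $G$ of $M$ by the Green function $G_\Omega$ of $\Omega$ with homogeneous Dirichlet boundary conditions: since the modified mean-value inequality is produced by a purely \emph{local} Green-identity computation, the nonparabolicity of $M$ plays no role once one works inside the bounded set $\Omega$. First I would recall that, $\Omega$ being bounded, the operator $-\Delta$ on $\Omega$ with Dirichlet conditions is positive with a positive spectral gap, so its minimal positive Green function $G_\Omega(x,y)$ exists and is finite for $x\ne y$, \emph{whether or not $M$ is parabolic}. Moreover, near the diagonal $G_\Omega$ has the same singularity as the local fundamental solution of $-\Delta$; hence for every $x\in\Omega$ there is $r_0(x)>0$, locally bounded away from $0$, such that the superlevel set $\{y\in\Omega:\,G_\Omega(x,y)\ge 1/r\}$ is a compact subset of $\Omega$ for all $0<r\le r_0(x)$. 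One then defines $\mathfrak m_r[u](x)$ and $\mathfrak M_r[u](x)$ exactly as in \eqref{e25}--\eqref{e26} with $G_\Omega$ in place of $G$, for $x\in\Omega$ and $0<r\le r_0(x)$, and interprets ``$\mathfrak M$-superharmonic (resp.\ $\mathfrak M$-continuous) in $\Omega$'' as the corresponding inequality (resp.\ identity) holding for all such $x$ and $r$.

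Fix $x_0\in\Omega$ and $0<r\le r_0(x_0)$, and set $K:=\{y\in\Omega:\,G_\Omega(x_0,y)\ge 1/r\}\Subset\Omega$. As in the proof of Theorem~\ref{teo5} one regularizes $u$ near $K$, obtaining smooth functions $u_\varepsilon$, superharmonic on a neighbourhood of $K$, converging monotonically to $u$ as $\varepsilon\downarrow 0$, with $u_\varepsilon(x_0)\to u(x_0)$ (this last point is where lower semicontinuity and $\mathfrak M$-continuity of $u$ enter, exactly as in Section~\ref{MVI-p}). For any $w\in C^\infty$ superharmonic on a neighbourhood of $K$, Green's second identity on $\{y:\,G_\Omega(x_0,y)>1/\xi\}$ (for $0<\xi\le r$), together with $-\Delta_y G_\Omega(x_0,\cdot)=\delta_{x_0}$, gives
\begin{equation*}
w(x_0)-\mathfrak m_\xi[w](x_0)=\int_{\{G_\Omega(x_0,\cdot)>1/\xi\}}\!\Big(G_\Omega(x_0,y)-\tfrac1\xi\Big)\,\big(-\Delta w(y)\big)\,d\mathcal{V}(y)\ \ge\ 0,
\end{equation*}
and multiplying by $\tfrac{\alpha+1}{r^{\alpha+1}}\xi^{\alpha}$ and integrating over $\xi\in(0,r)$ yields $\mathfrak M_r[w](x_0)\le w(x_0)$. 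This step uses only the compactness of $K$ and of the level sets $\{G_\Omega(x_0,\cdot)=1/\xi\}$ inside $\Omega$, and is otherwise identical to the computation carried out in Section~\ref{MVI-p}.

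Applying the previous step to $w=u_\varepsilon$ gives $\mathfrak M_r[u_\varepsilon](x_0)\le u_\varepsilon(x_0)$. Letting $\varepsilon\downarrow 0$, the right-hand side tends to $u(x_0)$, while, by the nonnegative-kernel representation of $\mathfrak M_r$ recalled after \eqref{e26} and monotone convergence (the sequence $u_\varepsilon$ being eventually dominated by a fixed continuous, hence bounded, $u_{\varepsilon_0}$ on $K$), the left-hand side tends to $\mathfrak M_r[u](x_0)$. Hence $\mathfrak M_r[u](x_0)\le u(x_0)$; as $x_0\in\Omega$ and $r\le r_0(x_0)$ were arbitrary, $u$ is $\mathfrak M$-superharmonic in $\Omega$. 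The subharmonic statement follows by applying this to $-u$, and the harmonic one by combining the two. The only point requiring a little extra care in the present, possibly parabolic, setting is the construction of $G_\Omega$ and the control of its singularity at the pole (needed to ensure the level sets are compact in $\Omega$); this, however, is classical, and all the genuine analytic content is inherited directly from the proof of Theorem~\ref{teo5}.
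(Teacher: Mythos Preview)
Your high-level plan is correct and matches the paper: replace the global Green function by a local Dirichlet Green function and rerun the proof of Theorem~\ref{teo5}. (The paper takes $G_{\Omega'}$ for a smooth $\Omega'\Supset\Omega$ rather than $G_\Omega$ itself, but this is a cosmetic difference.)

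However, your \emph{execution} is not the paper's, and it contains a gap. You write ``as in the proof of Theorem~\ref{teo5} one regularizes $u$ near $K$, obtaining smooth functions $u_\varepsilon$, superharmonic on a neighbourhood of $K$, converging monotonically to $u$\ldots'' --- but there is no such regularization in Section~\ref{MVI-p}. The paper never approximates $u$ by smooth superharmonic functions; it works directly with the distributional inequality $\int_M u\,\Delta\phi\,d\mathcal V\le 0$, choosing the test function $\phi(x)=\psi\bigl(1/G(x_0,x)\bigr)$ for an appropriate $\psi\in C^\infty([0,\infty))$, computing $\Delta\phi$ via the harmonicity of $G(x_0,\cdot)$ away from the pole, and converting the resulting inequality into $\int_0^\infty(t^2\psi'(t))'\,\mathfrak m_t[u](x_0)\,dt\le 0$ through the coarea formula. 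A suitable choice of $\psi$ then yields that $r\mapsto\mathfrak M_r[u](x_0)$ is nonincreasing, and $\mathfrak M$-continuity finishes the argument. All of this transfers verbatim once $G$ is replaced by $G_{\Omega'}$: the test function $\phi$ is still in $C_c^\infty(\Omega')$ because the superlevel sets of $G_{\Omega'}(x_0,\cdot)$ are compact in $\Omega'$.

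The regularization you invoke is precisely what the paper's approach is designed to avoid, and on a Riemannian manifold it is not free: there is no convolution that commutes with $\Delta$, so mollifying $u$ in local coordinates does not in general produce \emph{superharmonic} approximants (the Laplace--Beltrami operator has variable coefficients). One can manufacture monotone smooth superharmonic approximations by other means (Riesz decomposition plus approximation of the Riesz measure, or balayage arguments), but this is real work that your proposal does not supply and that the paper does not need. The cleanest fix is simply to drop the regularization entirely and follow the actual computation \eqref{e58}--\eqref{e63} with $G_{\Omega'}$ in place of $G$.
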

Note that in Corollary \ref{cor11}, the function $G$ in \eqref{e25} is meant to be replaced by the Green function of $-\Delta$ in $\Omega'$
completed with homogeneous Dirichlet boundary conditions at $\partial \Omega'$, where $\Omega'$ is any open bounded domain with smooth boundary such
that $\Omega\Subset\Omega'\,.$

%\begin{oss} Observe that definitions and results similar to those given above can be stated for upper semincontinuous subharmonic functions with values in $[-\infty, \infty)$.
%\end{oss}

We remark that, besides the previous ones, we expect that further results given in \cite{BonLan} can be extended to Riemannian manifolds. In
particular, it should be true that if a function $u$ is $\mathfrak{m}-$continuous, l.s.c.~and superharmonic, then it is $\mathfrak{m}-$superharmonic.
However, we limit ourselves to prove the results stated above, since they are the only ones we need in the study of existence and uniqueness for problem \eqref{e64}.

\subsection{A connection between the Green function and the porous medium equation}
In this section we state the nonlinear counterpart of a well-known result that relates the Green function to the heat kernel. In this case, the
role of the heat kernel is taken over by the fundamental solution $\mathcal B_{x_0}$ of problem \eqref{e64} with $\mu=\delta_{x_0}$, for each fixed
$x_0\in M$.

Suppose that hypothesis \eqref{H} is satisfied. Then by Theorem \ref{thmexi} the function $\mathcal B_{x_0}$ is well defined. If we drop such assumption the method developed in Section \ref{proof-ex} to construct $\mathcal B_{x_0}$ does not work. Nevertheless, the function $\mathcal B_{x_0}$ can always be defined as the monotone limit of approximate solutions to Dirichlet problems set in $B_R\times (0,\infty)$ (for the details, see the proof of Theorem \ref{thmBar} in Section \ref{alternative}). In general, we cannot in principle exclude that $\mathcal B_{x_0} = \infty$.
%Nevertheless, \eqref{e210} still holds, since it is true for the approximate solutions, and it remains valid when passing to the limit as
%$R\to\infty$, by monotonicity.
%Clearly, \eqref{e210} can be read as $\infty=\infty$, in some cases; in view of properties of the Green function, this corresponds to the case where
%$M$ does not admit the Green function.

\begin{thm}\label{thmBar}
Let $M$ be a complete noncompact Riemannian manifold of dimension $ N \ge 2 $. For any $x_0\in M$, let $\mathcal{B}_{x_0}$ be the solution of problem \eqref{e64} with
$\mu=\delta_{x_0}$, meant in the sense described above. Then
\begin{equation}\label{e210}
G(x_0,y)\,=\, \int_0^\infty \mathcal{B}^m_{x_0}(y, t)\, dt\quad \textrm{for all}\ y\in M\,.
\end{equation}
In particular, the time integral in \eqref{e210} exists finite if and only if $M$ is nonparabolic.
\end{thm}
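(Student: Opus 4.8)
The plan is to integrate the equation in time against a suitable test function and pass to the limit, exploiting the fact that $\mathcal B_{x_0}$ starts from $\delta_{x_0}$ and decays as $t\to\infty$. Concretely, fix $x_0\in M$ and write $v(y):=\int_0^\infty \mathcal B^m_{x_0}(y,t)\,dt$ (a priori with values in $[0,+\infty]$). The heuristic is that, integrating $u_t=\Delta(u^m)$ over $t\in(0,\infty)$, the left-hand side telescopes to $\mathcal B_{x_0}(y,0^+)-\mathcal B_{x_0}(y,\infty)=\delta_{x_0}-0$, so $-\Delta v=\delta_{x_0}$ in the distributional sense, whence $v=G(x_0,\cdot)$ by the defining property of the minimal positive Green function — provided one checks $v$ is the minimal nonnegative such solution (equivalently $v\to 0$ at infinity, or $v\le G(x_0,\cdot)$ by construction as a monotone limit of approximations on $B_R$).

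The key steps, in order, are: (1) work with the approximate solutions $\mathcal B_{x_0}^{(R)}$ on $B_R\times(0,\infty)$ with zero Dirichlet data, which by the construction in Section \ref{alternative} increase to $\mathcal B_{x_0}$ as $R\to\infty$; these are bounded, smooth for $t>0$, satisfy the smoothing effect and conserve mass $\int_{B_R}\mathcal B_{x_0}^{(R)}(\cdot,t)=1$. (2) For each $R$, multiply the equation by a test function $\phi\in C_c^\infty(B_R)$ and integrate over $(\varepsilon,T)\times B_R$, obtaining $\int_{B_R}\mathcal B_{x_0}^{(R)}(\cdot,T)\phi - \int_{B_R}\mathcal B_{x_0}^{(R)}(\cdot,\varepsilon)\phi = \int_\varepsilon^T\int_{B_R}(\mathcal B_{x_0}^{(R)})^m\,\Delta\phi$. (3) Let $\varepsilon\to 0$: the second term on the left tends to $\phi(x_0)$ since $\mathcal B_{x_0}^{(R)}(\cdot,\varepsilon)\rightharpoonup\delta_{x_0}$ (this uses the initial trace / construction of the fundamental solution). (4) Let $T\to\infty$: by the smoothing effect $\|\mathcal B_{x_0}^{(R)}(T)\|_\infty\le\|\mathcal B_{x_0}(T)\|_\infty\le KT^{-\alpha}$, so $\int_{B_R}\mathcal B_{x_0}^{(R)}(\cdot,T)\phi\to 0$, and one obtains $-\phi(x_0)=\int_0^\infty\int_{B_R}(\mathcal B_{x_0}^{(R)})^m\,\Delta\phi\,d\mathcal V\,dt$, i.e. $v_R:=\int_0^\infty(\mathcal B_{x_0}^{(R)})^m(\cdot,t)\,dt$ solves $-\Delta v_R=\delta_{x_0}$ in $B_R$ with zero boundary data, hence $v_R=G_{B_R}(x_0,\cdot)$, the Dirichlet Green function of $B_R$. (5) Finally let $R\to\infty$: by monotone convergence $v_R\uparrow v$ pointwise, while $G_{B_R}(x_0,\cdot)\uparrow G(x_0,\cdot)$ by the standard exhaustion characterization of the minimal positive Green function; therefore $v=G(x_0,\cdot)$, which is finite (for $y\ne x_0$, and everywhere if one interprets values in $[0,\infty]$) precisely when $M$ is nonparabolic. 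In the regular case where hypothesis \eqref{H} holds, steps (1)–(4) can be carried out directly with $\mathcal B_{x_0}$ in place of $\mathcal B_{x_0}^{(R)}$ on all of $M$, using the global smoothing effect \eqref{e70}, conservation of mass, and a cut-off exhausting $M$; one must then justify discarding the boundary terms produced by the cut-off, which is where the integrability properties of $G$ enter.

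The main obstacle is the passage $\varepsilon\to0$ in step (3) together with the interchange of the time integral and the Laplacian: one needs $\int_0^\infty(\mathcal B_{x_0}^{(R)})^m(\cdot,t)\,dt\in L^1_{\rm loc}$ with enough control to differentiate under the integral sign, and one must verify that $\mathcal B_{x_0}^{(R)}(\cdot,\varepsilon)$ genuinely converges weakly to $\delta_{x_0}$ — this is exactly the content of the fundamental-solution construction, so it should be quotable from Section \ref{proof-ex}/\ref{alternative}. A secondary subtlety, in the non-regular case without \eqref{H}, is that a priori $\mathcal B_{x_0}$ might be identically $+\infty$; the theorem handles this gracefully since then $v\equiv+\infty$ forces $G(x_0,\cdot)\equiv+\infty$, i.e. $M$ parabolic, consistently with the final claim. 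I would therefore organize the proof so that the finite/infinite dichotomy falls out of step (5) rather than being assumed at the outset.
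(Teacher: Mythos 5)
Your overall architecture matches the paper's: exhaust $M$ by balls $B_R$, show that the time-integral $v_R:=\int_0^\infty \big(\mathcal B^{(R)}_{x_0}\big)^m(\cdot,t)\,dt$ coincides with the Dirichlet Green function $G_R(x_0,\cdot)$ of $B_R$, then let $R\to\infty$ via the monotone exhaustion of the minimal Green function. Steps (1)--(3) and (5) are essentially the paper's steps (the paper tests against $\theta_\varepsilon(t)\mathcal G_R^\phi(x)$ and works with potentials, while you test against $\phi$ directly, but this is cosmetic).

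The real gap is hidden in the sentence ``hence $v_R=G_{B_R}(x_0,\cdot)$.'' What your distributional computation delivers is the identity
\[
\int_{B_R} G_R(x_0,x)\,\phi(x)\,d\mathcal V(x) \;=\; \int_{B_R}\Big(\int_0^\infty \big(\mathcal B^{(R)}_{x_0}\big)^m(x,t)\,dt\Big)\phi(x)\,d\mathcal V(x)\quad\text{for all }\phi\in C^\infty_c(B_R),
\]
i.e.\ $v_R=G_R(x_0,\cdot)$ \emph{almost everywhere}. But the theorem asserts \eqref{e210} \emph{for all} $y\in M$, and this pointwise upgrade is the entire content that the authors' potential-theoretic machinery (Sections~\ref{Pot-MVP}--\ref{Pot-meas}) is built to provide. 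The paper argues that both $x\mapsto G_R(x_0,x)$ and $x\mapsto\int_0^\infty \big(\mathcal B^{(R)}_{x_0}\big)^m(x,t)\,dt$ are $\mathfrak{M}$-superharmonic (the first by Corollary~\ref{cor10}; the second by first proving $\int_{t_1}^\infty$ is continuous and superharmonic, invoking Theorem~\ref{teo5}, and then applying the Fatou-type Lemma~\ref{lemma8} as $t_1\to 0$), and then uses the $\mathfrak M$-continuity of $\mathfrak M$-superharmonic functions (Lemma~\ref{lemma7}) to conclude that two such functions that agree a.e.\ agree everywhere. This step is not automatic and not a regularity triviality: the naive route of proving $v_R$ continuous near $x_0$ runs into the singularity, which is precisely why the paper settles for $\mathfrak M$-continuity instead of ordinary continuity of $\int_0^\infty$. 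Monotone convergence in $R$ then preserves pointwise equality, whereas a.e.\ equality for each $R$ does \emph{not} by itself yield the everywhere statement in the limit, since the exceptional null sets can accumulate.

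A secondary point: in step (4) you bound $\|\mathcal B^{(R)}_{x_0}(T)\|_\infty\le\|\mathcal B_{x_0}(T)\|_\infty\le KT^{-\alpha}$, but the global smoothing \eqref{e70} is proved under \eqref{H}, which Theorem~\ref{thmBar} does not assume. On $B_R$ itself the Faber--Krahn inequality always holds, so the smoothing estimate for $\mathcal B^{(R)}_{x_0}$ is available directly; the paper invokes exactly this local version. This is a phrasing slip rather than a structural issue, but the detour through the global $\mathcal B_{x_0}$ should be dropped.
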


Note that, as a consequence of Theorem \ref{thmBar} and of symmetry of the Green function (see \eqref{e9} below), we have the identity
$$\int_0^\infty \mathcal B^m_{x_0}(y, t)\, dt\,=\,\int_0^\infty \mathcal B^m_{y}(x_0, t)\, dt \quad \textrm{for all}\ x_0, y\in M\,.$$

\begin{oss}\label{rem: barenblatt}{\em
Since sectional curvatures are by assumption nonpositive, Hessian comparison (see \eqref{e5}) shows that $B^E_{0}(\rho(x), t)$, where $B^E_0(|x|, t)$ is the Euclidean Barenblatt solution, is a supersolution of problem \eqref{e64} with $\mu=\delta_{0}$. By the comparison principle in bounded domains, it is not difficult to show that, as a consequence, if $u$ is a solution of \eqref{e64} with $\mu \equiv u_0$ and $\operatorname{supp}u_0$ compact, then $\operatorname{supp} u(t)$ is also compact for all $ t>0 $. For the details, we refer to the proof of Proposition \ref{pro: cons} in Section \ref{sect-cons}.

Moreover, in view of the construction of $\mathcal B_{0}$, by means of the same arguments as above, we have that $\mathcal B_{0}\leq B^E_{0}$ in $M\times (0, \infty)$. In particular, $\operatorname{supp} \mathcal B_{0}$ is compact.}
\end{oss}

\section{Preliminaries in Riemannian geometry and analysis on manifolds}\label{RG}
Let $M$ be a complete noncompact Riemannian manifold. Let $\Delta$
denote the standard Laplace-Beltrami operator, $\nabla$ the
gradient (with respect to the metric of $M$) and $d\mathcal V$ the
Riemannian volume element.

In \cite{Stric} it is shown that $-\Delta$, defined on
$C^\infty_c(M)$, is essentially self-adjoint in $L^2(M)$\,. In
particular, this implies that if $f\in L^2(M)$ with $\Delta f\in
L^2(M)$, then $\nabla f\in L^2(M)$, and there exists a sequence of
functions $\{f_j\}\subset C^\infty_c(M)$ such that
\begin{equation*}\label{e17}
f_j\to f\,,\quad \nabla f_j\to \nabla f \,,\quad \Delta f_j\to \Delta f\quad \textrm{in}\;\; L^2(M)\,.
\end{equation*}
In addition, for any $f, g\in L^2(M)$ with $\Delta f, \Delta g\in L^2(M)$ we have
\begin{equation*}\label{e18}
\int_M f\,\Delta g\, d\mathcal V \,=\,- \int_M \langle \nabla f, \nabla g \rangle\, d\mathcal V\,=\, \int_M g \Delta f\, d\mathcal V\,.
\end{equation*}

 It is direct to see that
the Laplace-Beltrami operator in the polar coordinates has the form
\begin{equation}\label{e1}
\Delta = \frac{\partial^2}{\partial \rho^2} + m(\rho, \theta)\frac{\partial}{\partial
\rho}+\Delta_{S_{\rho}},
\end{equation}
where $m(\rho, \theta):=\frac{\partial}{\partial \rho}\big(\log\sqrt{A}\big)$, $ A:=\det (A_{ij})$, $ \Delta_{S_\rho}$ is the Laplace-Beltrami operator on the submanifold $S_{\rho}:=\partial B(o, \rho)\setminus \operatorname{cut}(o)$ and $ B(o,\rho) $ denotes the Riemannian ball of radius $\rho$ centred at $o$ ($B(\rho)$ for short).
Furthermore, on model manifolds
\[\Delta = \frac{\partial^2}{\partial \rho^2}+ (N-1)\frac{\psi'}{\psi}\frac{\partial}{\partial\rho}+ \frac1{\psi^2}\Delta_{\mathbb S^{N-1}}\,,\]
where $\Delta_{\mathbb S^{N-1}}$ is the Laplace-Beltrami operator in $\mathbb S^{N-1}\,.$

\smallskip

Let us recall comparison results for sectional and Ricci
curvatures, which will be used in the sequel. For any $x\in
M\setminus\{o\}$, denote by $\textrm{Ric}_o(x)$ the
{\it Ricci curvature} at $x$ in the direction
$\frac{\partial}{\partial\rho}$. Let $\omega$ denote any pair of tangent
vectors from $T_xM$ having the form $\left(\frac{\partial}{\partial \rho}
,X\right)$, where $X$ is a unit vector orthogonal to
$\frac{\partial}{\partial\rho}$. Denote by $\textrm{K}_{\omega}(x)$ the {\it
sectional curvature} at the point $x\in M$ of the $2$-section
determined by $\omega$. By classical results (see e.g.~\cite{GW}, \cite[Section
15]{Grig}), if
\begin{equation}\label{e3a}
\textrm{K}_{\omega}(x)\leq -\frac{\widetilde \psi''(\rho)}{\widetilde \psi(\rho)}\quad \textrm{for all}\;\; x\equiv(\rho,\theta)\in M\setminus\{o\},
\end{equation}
for some function $\widetilde \psi\in \mathcal A$, then
\begin{equation*}\label{e3}
m(\rho, \theta)\geq (N-1)\frac{\widetilde \psi'(\rho)}{\widetilde \psi(\rho)}\quad \textrm{for all}\;\; \rho>0, \, \theta \in \mathbb S^{N-1}\,.
\end{equation*}
Moreover, (see e.g.~\cite[Section 3]{Grig})
\begin{equation}\label{e302}
\mathcal V(B_R)\geq \omega_N \int_0^R \widetilde \psi^{N-1}(\rho) \, d\rho\,,
\end{equation}
where $\omega_N$ is the measure of the unit sphere $\mathbb S^{N-1}\,.$

On the other hand, if
\[\textrm{Ric}_{o}(x)\geq
-(N-1)\frac{\psi''(\rho)}{\psi(\rho)}\quad \textrm{for all}\;\; x\equiv(\rho,\theta)\in M\setminus\{o\},\] for some function $\psi\in \mathcal A$, then
\begin{equation}\label{e4}
m(\rho, \theta)\leq (N-1)\frac{\psi'(\rho)}{ \psi(\rho)}\quad \textrm{for all}\;\; \rho>0, \, \theta \in \mathbb S^{N-1}\,.
\end{equation}

Note that if $M_\psi$ is a model manifold, then for any $x\equiv(\rho, \theta)\in M_\psi\setminus\{o\}$ we have
\[\textrm{K}_{\omega}(x)=-\frac{\psi''(\rho)}{\psi(\rho)} \]
and
\[\textrm{Ric}_{o}(x)=-(N-1)\frac{\psi''(\rho)}{\psi(\rho)}\,.\]

Since in view of hypothesis \eqref{H} we have $\textrm{K}_{\omega}(x)\leq 0$, we can infer that condition \eqref{e3a} is trivially satisfied with
$\widetilde \psi(\rho)=\rho$. Therefore,
\begin{equation}\label{e5}
m(\rho, \theta)\geq \frac{N-1}{\rho}\quad \textrm{for all}\;\; x\equiv(\rho, \theta)\in M\setminus \{o\}\,.
\end{equation}

\medskip

Let $\textrm{spec}(-\Delta)$ be the spectrum in $L^2(M)$ of the operator
$-\Delta$. Note that (see \cite[Section 10]{Grig})
\[\textrm{spec}(-\Delta)\subseteq [0,\infty)\,.\]
%Denote by $\lambda_1(M)$ the bottom of $\textrm{spec}(-\Delta)$, that is
%\[\lambda_1(M):=\inf \textrm{spec}(-\Delta)\,.\]
%If $M$ has sectional curvatures bounded above by $-k_0$, for some $k_0>0$, then (see, e.g., \cite{Grig})
%\begin{equation*}\label{e6}
%\lambda_1(M)\geq \frac{(N-1)^2 k_0}{4}\,;
%\end{equation*}
%therefore,
As a consequence of \eqref{H}-(i), the {\it Sobolev inequality}
\begin{equation}\label{e12}
\|f\|_{\frac{2 N}{N-2}}\leq C_S \| \nabla f\|_2 \quad \textrm{for all}\;\; f\in C^\infty_c(M)
\end{equation}
holds for some positive constant $C_S>0$, which is equivalent to  the {\it Faber-Krahn} inequality
\begin{equation}\label{e13}
\lambda_1(\Omega)\geq C_{FK} [\mathcal V(\Omega)]^{-\frac 2 {N}}
\end{equation}
for some positive constant $C_{FK}$, for any bounded regular domain $\Omega\subset M$. Here $\lambda_1(\Omega)$ denotes the first eigenvalue for the operator $-\Delta$ in $L^2(\Omega)$, completed with homogeneous Dirichlet boundary conditions on $\partial \Omega$. Moreover, for some positive constant $C_{N}$ one has
\begin{equation}\label{e14}
\mathcal V (B_R(x))\geq C_{N}  R^{N}\quad \textrm{for any}\,\; x\in M, R>0\,\,.
\end{equation}
Inequalities \eqref{e12} and \eqref{e13} and their connection are classical results, which follow e.g.~from \cite[Exercise 14.5, Corollary 14.23,
Remark 14.24]{Grig3} or \cite[Lemma 8.1, Theorem 8.3]{Hebey}. Furthermore, \eqref{e14} is due to \eqref{H}-(i) and \eqref{e302} with
$\widetilde{\psi}(\rho)=\rho.$

Let $G(x,y)$ be the {\it Green function} on $M$. Note that a priori (see \cite{Grig}) either $G(x,y)=\infty$ for all $x,y\in M$ or $G(x,y)<\infty$ for all $x\neq y.$
%However, we have the next estimate from above for the Green function
%\begin{equation}\label{e15}
%G(x,y) \leq \bar C\int_{d(x,y)}^\infty \frac{r}{\mathcal V(B_r(x))} dr\,\quad \textrm{for all}\;\; x,y\in M\,,
%\end{equation}
%for some positive constant $\bar C>0\,$, independent of $x,y$ (see, e.g., \cite[Example 5.51]{Grig2})\,. From \eqref{e14} and \eqref{e15} we then
%deduce that

Since $M$ is by assumption a Cartan-Hadamard manifold and hence sectional curvatures are nonpositive, standard Hessian comparisons imply that
\begin{equation}\label{e16}
G(x,y)\leq  \widetilde C \, [\operatorname{dist}(x,y)]^{2- N}\quad \textrm{for all}\,\, x,y\in M\,,
\end{equation}
for a suitable $\widetilde C>0$ (we refer e.g.~to \cite[Theorem 4.2]{Grig2} and \eqref{e35} below). In particular, the {\it Green} function $G(x,y)$ is finite for any $x\neq y$ and vanishes as $ \operatorname{dist}(x,y) \to \infty $.
%, and the operator
%\[f\mapsto \mathcal G^f(x):= \int_M G(x,y) f(y) d\mathcal V(y)
%\] is bounded and self-adjoint from $L^2(M)$ into itself.
Furthermore (see \cite[Section 4]{Grig}),
\begin{equation}\label{e7}
G(x,y) \sim \widetilde C\,[\operatorname{dist}(x,y)]^{2-N} \quad \textrm{as}\ \operatorname{dist}(x,y)\to 0 \ \textrm{(for any fixed $y$)} \, ,
\end{equation}
\begin{equation}\label{e8}
G(x,y)>0\quad \textrm{for all}\;\; x,y\in M\,,
\end{equation}
\begin{equation}\label{e9}
G(x,y)=G(y,x) \quad \textrm{for all}\;\; x,y\in M\,.
\end{equation}
In addition,
\begin{equation}\label{e57}
\textrm{for each fixed} \;\, y\in M, \ x\mapsto G(x,y)\;\,
\textrm{is of class} \,\; C^\infty(M\setminus\{y\}) \, ,
\end{equation}
\begin{equation*}\label{e10}
\Delta_x G(x, y) = 0 \quad \textrm{for any}\ x\in M\setminus \{y\}\,,
\end{equation*}
and
\begin{equation}\label{e11}
\int_M  G(x,y) \Delta\phi(x) \, d\mathcal V(x)\,=\,-\phi(y)  \,\leq \,0
\end{equation}
for any $\phi\in C^\infty_c(M)$ with $ \phi\geq 0$. Moreover, by Sard's theorem,
%\begin{equation}\label{e61}
%\nabla_x G(x,y)\neq 0 \;\; \textrm{for a.e.} \;\;  x,y\in M, x\neq
%y\,,
%\end{equation}
% principio di Hopf
for all $x\in M$ and a.e.~(possibly depending on $x$) $a>0$, one has $\nabla_y G(x,y)\not=0$  on the level set $\{y\in M\,:\,
G(x,y)=a\}$. In particular such level sets are smooth.

\medskip

Let $h$ be the {\it heat kernel} on $M$; we have the identity
\begin{equation}\label{e35}
G(x,y)=\int_0^\infty h(x,y,t) \, dt \quad \textrm{for all } x,y \in M
\end{equation}
(see \cite{Grig}). Moreover, let $\{T_t\}_{t\geq 0}$ denote the heat semigroup on $M$. The minimal positive solution of the Cauchy
problem for heat equation
\[
\begin{cases}
u_t = \Delta u & \textrm{in } M\times (0,\infty) \, , \\
u = u_0 \in L^1(M) \, , \, u_0\geq 0 & \textrm{on } M \times \{0\} \, ,
\end{cases}
\]
can be written as
\[T_t[u_0](x)=\int_M h(x,y,t) u_0(y) \, d\mathcal{V}(y) \quad \textrm{for all } x\in M, \, t\geq 0 \,.\]
Note that
\begin{equation}\label{e33}
\|T_t \phi \|_p \leq \|\phi \|_p \quad \textrm{for all } t>0 \, , \
p \in [1,\infty] \, , \ \phi\in L^p(M)\,.
\end{equation}
Furthermore, as a consequence of \eqref{e12}, we have
\begin{equation}\label{e31}
\|T_t \phi\|_\infty \leq \frac C{t^{{N}/2}} \|\phi\|_1\quad \textrm{for any } t>0, \, \phi\in L^1(M)\,,
\end{equation}
for some $C=C(N)>0$ (see e.g.~\cite[Chapter 4]{Dav}).
%Moreover, due to
%$\lambda_1(M)>0,$ we obtain
%\begin{equation}\label{e32}
%\|T_t \phi\|_2 \leq e^{-\lambda_1 t}\|\phi\|_2\quad \textrm{for
%any}\,\, t>0, \phi\in L^2(M)\,
%\end{equation}
%(see \cite{Dav}\,.
%\medskip
%\noindent \bf The case $N=2$\rm. Suppose that $N=2$ and that assumption \eqref{H-2} holds. Then \eqref{e302} with the choice $\widetilde
%\psi(\rho)=\rho^{N_2-1}$ outside a ball of a suitable radius, with $N_2$ given in \eqref{e301}, implies \eqref{e14} with $N$ replaced by $N_2$. Now,
%by \eqref{e14} and results given in \cite{CSC} (see also \cite{GSC}), the analogue of inequalities \eqref{e12} and \eqref{e13} can be deduced in this
%case as well. All the subsequent discussion given in the case $N\ge3$ then holds by replacing again $N$ with $N_2$.

\section{Auxiliary results in potential analysis on Riemannian manifolds}\label{Pot}
This section is devoted to establishing some crucial results for superharmonic functions and potentials of Radon measures, the latter being closely related to the former. Here $M$ will always be assumed, unless otherwise stated, to be a nonparabolic Cartan-Hadamard
manifold of dimension $N\ge2$.
\subsection{Proof of the modified mean-value inequality and properties of superharmonic functions}\label{MVI-p}
In order to show the modified mean-value inequality, we need a preliminary lemma.
\begin{lem}\label{lemma9}
For each fixed $y\in M$, the function $x\mapsto G(x,y)$ from $M$ to $[0,+\infty]$ is superharmonic. Moreover, it is both $\mathfrak{m}-$ and
$\mathfrak{M}-$continuous.
\end{lem}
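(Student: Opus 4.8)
The plan is to check separately that $x\mapsto G(x,y)$ (for fixed $y\in M$) is lower semicontinuous and locally integrable, that it is superharmonic in the sense of Definition~\ref{defsh}, and that it is $\mathfrak m$-continuous (the $\mathfrak M$-continuity then following essentially for free). The first points are quick. By \eqref{e7}, $G(x,y)\sim\widetilde C\operatorname{dist}(x,y)^{2-N}$ as $x\to y$, which is integrable near $y$ (recall $N\ge2$), while by \eqref{e57} $G(\cdot,y)$ is smooth, hence locally bounded, away from $y$; thus $G(\cdot,y)\in L^1_{\mathrm{loc}}(M)$. Since $G(\cdot,y)$ is continuous and positive on $M\setminus\{y\}$ and $G(x,y)\to+\infty$ as $x\to y$ (with $G(y,y)=+\infty$), it is l.s.c.\ on $M$ with values in $(0,+\infty]$. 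Superharmonicity is then precisely \eqref{e11} with pole at $y$: for every $0\le\phi\in C^\infty_c(M)$ one has $\int_M G(x,y)\,\Delta\phi(x)\,d\mathcal V(x)=-\phi(y)\le0$.

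For the $\mathfrak m$-continuity I would first record the normalization identity
\[
\int_{\{z\in M:\,G(x,z)=1/r\}}\big|\nabla_zG(x,z)\big|\,dS(z)\,=\,1
\]
for every $x\in M$ and (by Sard's theorem) a.e.\ $r>0$; equivalently, $\mathfrak m_r$ applied to the constant function $1$ equals $1$. This follows from the divergence theorem: $G(x,\cdot)$ is harmonic on $\{0<G(x,\cdot)<a\}$, which by \eqref{e16} is the complement of the compact set $\{G(x,\cdot)\ge a\}$, so integrating $\operatorname{div}\big(\nabla_zG(x,z)\big)=0$ over the annular region between two regular level sets shows the integral above is independent of $a=1/r$; its value is pinned to $1$ either by letting $a\to+\infty$ and using the singularity \eqref{e7} together with the normalization \eqref{e11}, or by inserting into \eqref{e11} a cutoff $\phi$ equal to $1$ on $\{G(x,\cdot)\ge a\}$ and integrating by parts. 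In particular $\mathfrak m_r[\,\cdot\,](x)$ is an average against the probability measure $|\nabla_zG(x,z)|\,dS(z)$ supported on the level set $\{G(x,\cdot)=1/r\}$.

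Next I would observe that these level sets concentrate at $x$ as $r\to0$: by \eqref{e16}, $G(x,\cdot)$ is bounded on $M\setminus B_\delta(x)$ for each $\delta>0$, whereas $G(x,z)\to+\infty$ as $z\to x$, so once $r$ is small enough the (regular) level set $\{z:G(x,z)=1/r\}$ is contained in $B_\delta(x)$. Combined with the averaging property this yields, for any function $u$ continuous at $x$, $|\mathfrak m_r[u](x)-u(x)|\le\sup_{B_\delta(x)}|u-u(x)|\to0$ as $\delta\to0$ — which already proves the remark preceding Lemma~\ref{lemma9}. Applying this to $u=G(\cdot,y)$ (continuous near every $x\neq y$, the shrinking level sets eventually avoiding $y$) gives $\mathfrak m_r[G(\cdot,y)](x)\to G(x,y)$ for all $x\neq y$. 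The remaining point $x=y$ is handled by symmetry \eqref{e9}: on $\{z:G(y,z)=1/r\}$ one has $G(z,y)=G(y,z)=1/r$, whence $\mathfrak m_r[G(\cdot,y)](y)=\tfrac1r\int_{\{G(y,\cdot)=1/r\}}|\nabla_zG(y,z)|\,dS(z)=\tfrac1r\to+\infty=G(y,y)$. Hence $G(\cdot,y)$ is $\mathfrak m$-continuous.

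Finally, $\mathfrak M$-continuity follows from $\mathfrak m$-continuity by an elementary Abelian argument: by \eqref{e26}, $\mathfrak M_r[u](x)$ is the average of $\xi\mapsto\mathfrak m_\xi[u](x)$ over $(0,r)$ against the probability density $(\alpha+1)\xi^\alpha/r^{\alpha+1}$, so $\mathfrak m_\xi[u](x)\to L\in(-\infty,+\infty]$ as $\xi\to0$ forces $\mathfrak M_r[u](x)\to L$ as $r\to0$; for $x=y$ this is also immediate from $\mathfrak m_\xi[G(\cdot,y)](y)=1/\xi$, which gives $\mathfrak M_r[G(\cdot,y)](y)=\tfrac{\alpha+1}{\alpha}\,\tfrac1r\to+\infty$. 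All steps are routine; the one deserving a little care is the concentration of level sets, which rests on the decay estimate \eqref{e16} (and, in dimension $N=2$, on an analogous control of $G(x,\cdot)$ at infinity), together with the harmless restriction to regular values of $r$ coming from Sard's theorem.
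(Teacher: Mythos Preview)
Your proof is correct and follows essentially the same route as the paper's: superharmonicity from \eqref{e11}, a normalization identity for the level-set measure, concentration of the level sets at the pole to handle continuous functions, and the explicit computation $\mathfrak m_r[G(\cdot,y)](y)=\tfrac1r\to\infty$ via symmetry. The only noteworthy difference is in the normalization step: the paper first records the general Green-type identity \eqref{e50} and specializes it (with a cutoff $\xi\equiv1$ near $x$) to obtain only the \emph{limit} \eqref{e51}, $\lim_{\rho\to0}\int_{\{G(x,\cdot)=1/\rho\}}|\nabla G|\,dS=1$, whereas you argue directly via the divergence theorem on annular regions between regular level sets to get the \emph{exact} equality $\int_{\{G(x,\cdot)=1/r\}}|\nabla G|\,dS=1$ for a.e.\ $r>0$, then pin the constant. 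Your version is slightly stronger and makes the averaging estimate $|\mathfrak m_r[u](x)-u(x)|\le\sup_{B_\delta(x)}|u-u(x)|$ cleaner; the paper's limit version suffices just as well but requires carrying the factor $\mathfrak m_r[1](x)$ through the estimate. Both arguments need the superlevel sets $\{G(x,\cdot)\ge a\}$ to be compact, which you correctly extract from \eqref{e16} for $N\ge3$ and flag as needing separate justification when $N=2$.
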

\begin{proof}
In view of \eqref{e7} and \eqref{e11}, the function $x\mapsto G(x,y)$ is superharmonic. Furthermore, an easy application of the divergence theorem yields, for any $x\in M$, for a.e.\ $r>0$,
\begin{equation}\label{e50}
-\int_{\{y\in M\,:\, G(x,y)>\frac 1 r\}} G(x,y)\Delta \phi(y) d\mathcal V(y) = -\mathfrak m_r[\phi](x) -\frac 1 r \int_{\{y\in M\,:\, G(x,y)>\frac
1 r\}} \Delta \phi(y) d\mathcal V(y) + \lim_{\rho\to 0} \mathfrak m_\rho[\phi](x)\,
\end{equation}
for any $\phi\in C^2(M)$. This can be shown exactly as in formula (11.4) in \cite{BonLan}, upon noting that $\lim_{r\to 0}\mathfrak m_{\rho}[\phi](x)$ exists, as proved in formula (11.2) and just above (11.7) in \cite{BonLan}.

Now, we choose $\phi=\xi$ with $\xi\in C^\infty_c(M)$, $ \xi=1$ in a neighbourhood of $x$, and $r>0$ so large that $\operatorname{supp}\, \xi\subset
\{y\in M\,:\, G(x,y)>\frac 1 r\}$. Hence, using \eqref{e11}, \eqref{e50}, an integration by parts, and the fact that $\mathfrak m_r[\phi](x)=0$, we obtain
\begin{equation}\label{e51}
\lim_{\rho\to 0}\int_{\{y\in M\,:\, G(x,y)=\frac 1 {\rho}\}} |\nabla_y G(x,y)| dS(y)\,=1\,.
\end{equation}
From \eqref{e51} it easily follows that any continuous function on $M$ is automatically $\mathfrak{m}-$, and so $\mathfrak{M}-$con\-tinuous.
Therefore, for each $y\in M$, the function $x\mapsto G(x,y)$ is $\mathfrak m-$continuous at any $x\in M\setminus \{y\}$. We are left with showing
that it is $\mathfrak m-$continuous also at $x=y$. This is a straightforward consequence of the very definition of $\mathfrak m_r$ and \eqref{e51}:
\[
\begin{aligned}
\lim_{r\to 0}\mathfrak m_r[G(\cdot, y)](y) = & \lim_{r\to 0}\int_{\left\{z\in M\,:\; G(y,z)=\frac 1 r\right\}} G(y,z) \big|\nabla_z G(y,z)\big|dS(z) \\
= &\lim_{r\to 0}
\frac 1 r \int_{\left\{z\in M\,:\; G(y,z)=\frac 1 r\right\}}
 \big|\nabla_z G(y,z)\big|dS(z)=\infty\,.
\end{aligned}
 \]
Hence the function $x\mapsto G(x,y)$ is $\mathfrak M-$continuous, too. This completes the proof.
\end{proof}

\begin{proof}[Proof of Theorem \ref{teo5}]
We shall prove that for every $x\in M$ the function $r\mapsto \mathfrak M_r[u](x)$ is nonincreasing in $(0,\infty)$.  Note that this property
combined with the fact that $u$ is $\mathfrak{M}-$continuous easily gives the thesis.

Now, let $\psi\in C^\infty([0,\infty))$ with $\psi\geq 0$, $ \psi$
constant in $[0,\varepsilon),\, \psi=0$ in $[R, \infty)$ for some
$R>\varepsilon>0\,.$ Fix any $x_0\in M$. Define
\begin{equation}\label{e62}
\phi(x):= \psi\left(\frac{1}{G(x_0, x)}\right) \quad \textrm{for all} \;\;  x \in M \, ,
\end{equation} with the obvious convention that
$\phi(x_0)=\psi(0)\,.$ In view of \eqref{e57} and \eqref{e16}, we
have that $\phi\in C^\infty_c(M)\,.$ Since $u$ is superharmonic,
due to Definition \ref{defsh} there holds
\begin{equation}\label{e58}
\int_M u \Delta \phi \, d\mathcal V \leq 0\,.
\end{equation}
A straightforward computation yields
\begin{equation}\label{e59}
\Delta \phi(x)\,=\,\frac{|\nabla_x G(x_0, x)|^2}{[G(x_0,
x)]^4}\left[\psi''\left(\frac 1{G(x_0, x)}\right) + 2 G(x_0, x)
\psi'\left(\frac1{G(x_0, x)}\right)\right] \quad \textrm{for
all}\;\; x\in M\,.
\end{equation}
In view of \eqref{e8}, of the explicit form of $\Delta \phi(x)$ given above, and of the discussion after formula \eqref{e11},  we can apply the smooth coarea
formula (see again \cite[Exercise III.12]{Chavel}), \eqref{e58},
\eqref{e59} to get
\begin{equation}\label{e60}
\begin{aligned}
0&\geq \int_M u \Delta \phi \, d\mathcal V \\%=
%\int_\varepsilon^{R}\int_{\{x\in M\,:\,\frac 1{G(x_0, x)}= t\}}
%u(x) \Delta \phi(x) \frac{dS(x)}{\left|\nabla \left(\frac
%1{G(x_0,x)}\right)\right|} dt\\ & = \int_0^\infty \int_{\{x\in
%M\,:\,\frac 1{G(x_0, x)}= t\}} u(x) [G(x_0, x)]^2 \Delta
%\phi(x)\frac{dS(x)}{\left|\nabla \left(\frac
%1{G(x_0,x)}\right)\right|} dt \\
& =\int_0^\infty \int_{\{x\in M\,:\,\frac 1{G(x_0, x)}= t\}} u(x)
\frac{|\nabla_x G(x_0, x)|}{[G(x_0, x)]^2} \left[\psi''\left(\frac
1{G(x_0, x)}\right) + 2 G(x_0, x) \psi'\left(\frac1{G(x_0,
x)}\right)\right] dt \\
& = \int_0^\infty t^2\left[\psi''(t) +
\frac{2\psi'(t)}{t}\right]\int_{\{x\in M\,:\,\frac 1{G(x_0, x)}=
t\}} u(x)\big|\nabla_x G(x_0, x)\big| dS(x) dt \\
& = \int_0^\infty[t^2\psi''(t) + 2t \psi'(t)]\mathfrak m_t[u](x_0) \,
dt =\int_0^\infty(t^2 \psi'(t))'\mathfrak m_t[u](x_0) \, dt\,.
\end{aligned}
\end{equation}
Given any $\eta\in C^\infty_c((0, \infty))$ with $\eta\geq 0$, we
can pick
\[\psi(t):= \int_t^\infty \frac{\eta(s)}{s^2} \, ds \quad \textrm{for all } t\in [0, \infty)\,.\]
Using such $\psi$ in \eqref{e62} and \eqref{e60} we obtain
\begin{equation}\label{e63}
\int_0^\infty \eta'(t) \mathfrak m_t[u](x_0) \, dt \geq 0\quad
\textrm{for all } \eta\in C^\infty_c((0, \infty)), \, \eta\geq
0\,.
\end{equation}
By \cite[Lemma 8.2.13]{BLU}, \eqref{e63} implies that the function
$r\mapsto \mathfrak M_r[u](x)$ is nonincreasing in $(0,\infty)$.
This completes the proof.
\end{proof}

As a consequence of Lemma \ref{lemma9} and Theorem \ref{teo5} we
obtain the next result.
\begin{cor}\label{cor10}
For each $y\in M,$ the function $x\mapsto G(x,y)$ is
$\mathfrak{M}-$superharmonic.
\end{cor}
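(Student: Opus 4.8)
The plan is simply to combine the two results already established. Fix $y \in M$ and set $u := G(\cdot,y) \colon M \to [0,+\infty]$. The only thing that needs checking is that $u$ satisfies the hypotheses of Theorem \ref{teo5}$(i)$. Lower semicontinuity is immediate: by \eqref{e57} the map $x \mapsto G(x,y)$ is of class $C^\infty$, hence continuous, on $M \setminus \{y\}$, while \eqref{e7} forces $G(x,y) \to +\infty$ as $x \to y$; thus $u$ is l.s.c.\ on all of $M$ with value $u(y) = +\infty$, and in particular it takes values in $(-\infty,+\infty]$, as required. That $u$ is superharmonic in the distributional sense of Definition \ref{defsh}, and that it is $\mathfrak{M}$-continuous, is exactly the content of Lemma \ref{lemma9}.

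With these three properties in hand — l.s.c., superharmonic, and $\mathfrak{M}$-continuous — Theorem \ref{teo5}$(i)$ applies verbatim and gives that $u = G(\cdot,y)$ is $\mathfrak{M}$-superharmonic, that is, $\mathfrak{M}_r[G(\cdot,y)](x) \le G(x,y)$ for every $x \in M$ and every $r > 0$. Since $y \in M$ was arbitrary, this is precisely the assertion of the corollary. I do not expect any real obstacle here: the analytic content has already been absorbed into Lemma \ref{lemma9} (the mean-value normalisation \eqref{e51} together with the $\mathfrak{m}$-continuity computation at the pole) and into the monotonicity of $r \mapsto \mathfrak{M}_r[u](x)$ proved inside Theorem \ref{teo5}; the corollary is just the formal specialisation of the latter to the Green function, and the proof will be only a couple of lines.
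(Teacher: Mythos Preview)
Your proposal is correct and matches the paper's own approach exactly: the paper simply states the corollary ``as a consequence of Lemma~\ref{lemma9} and Theorem~\ref{teo5}'' without further proof, and you have written out precisely that deduction. The only addition you make is the explicit check of lower semicontinuity, which is harmless and implicitly covered by Lemma~\ref{lemma9} anyway.
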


We have two further lemmas, concerning superharmonic functions, which will be used in the sequel.
\begin{lem}\label{lemma7}
Let $u$ be an $\mathfrak{M}-$superharmonic function. Then $u$ is
$\mathfrak{M}-$continuous.
\end{lem}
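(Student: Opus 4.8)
The plan is to prove, for every $x\in M$, the two inequalities
\[
\limsup_{r\to0^+}\mathfrak M_r[u](x)\ \le\ u(x)\ \le\ \liminf_{r\to0^+}\mathfrak M_r[u](x)\,,
\]
which together give $u(x)=\lim_{r\to0^+}\mathfrak M_r[u](x)$, i.e.\ $\mathfrak M$-continuity. The upper bound is immediate, since by hypothesis $\mathfrak M_r[u](x)\le u(x)$ for \emph{every} $r>0$. So the whole content lies in the lower bound, which I would extract from the lower semicontinuity of $u$ combined with the fact that, for fixed $x$, the functional $v\mapsto\mathfrak M_r[v](x)$ is integration against a compactly supported \emph{probability} measure whose support shrinks to $\{x\}$ as $r\to0^+$.

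First I would establish these two structural facts about the averaging measures. By the decay \eqref{e16} of $G(x,\cdot)$ (which vanishes at infinity) and completeness (Hopf--Rinow), the superlevel set $K_r(x):=\{y\in M:G(x,y)\ge 1/r\}$ is compact for each $r>0$; these sets decrease as $r\downarrow0$, and since $G(x,\cdot)$ is finite off $x$ by nonparabolicity while $G(x,y)\to+\infty$ as $y\to x$ by \eqref{e7}, we get $\bigcap_{r>0}K_r(x)=\{x\}$; hence, for any neighbourhood $V$ of $x$, one has $K_r(x)\subset V$ for all sufficiently small $r>0$. Next, $\mathfrak M_r[1](x)=1$ for every $r>0$: applying the divergence theorem to the harmonic function $y\mapsto G(x,y)$ on the region enclosed between two smooth level sets (which exist for a.e.\ level, by Sard's theorem and the discussion after \eqref{e11}) shows that $\mathfrak m_\xi[1](x)=\int_{\{G(x,\cdot)=1/\xi\}}|\nabla_y G(x,y)|\,dS(y)$ is independent of $\xi$ for a.e.\ $\xi>0$, and by \eqref{e51} this common value equals $1$; inserting $u\equiv1$ into \eqref{e26} then gives $\mathfrak M_r[1](x)=\tfrac{\alpha+1}{r^{\alpha+1}}\int_0^r\xi^\alpha\,d\xi=1$. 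Via the coarea reformulation of \eqref{e26}, this says precisely that $\mathfrak M_r[u](x)=\int_{K_r(x)}u\,d\nu_{r,x}$, where $\nu_{r,x}$ is a Borel probability measure concentrated on the compact set $K_r(x)$; in particular, as $u$ is l.s.c.\ and hence bounded below on $K_r(x)$, the value $\mathfrak M_r[u](x)$ is well defined in $(-\infty,+\infty]$.

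The lower bound is then routine. Fix $x$ and assume first $u(x)<+\infty$. Given $\varepsilon>0$, lower semicontinuity provides a neighbourhood $V$ of $x$ with $u\ge u(x)-\varepsilon$ on $V$; for $r$ small enough $K_r(x)\subset V$, so
\[
\mathfrak M_r[u](x)=\int_{K_r(x)}u\,d\nu_{r,x}\ \ge\ (u(x)-\varepsilon)\,\nu_{r,x}(K_r(x))\ =\ u(x)-\varepsilon\,,
\]
whence $\liminf_{r\to0^+}\mathfrak M_r[u](x)\ge u(x)-\varepsilon$ and, as $\varepsilon\to0$, $\liminf_{r\to0^+}\mathfrak M_r[u](x)\ge u(x)$. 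If instead $u(x)=+\infty$, the same computation with $u(x)-\varepsilon$ replaced by an arbitrary $L>0$ yields $\liminf_{r\to0^+}\mathfrak M_r[u](x)\ge L$ for all $L$, hence $\lim_{r\to0^+}\mathfrak M_r[u](x)=+\infty=u(x)$. Together with the trivial upper bound this proves the claim.

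I expect the only delicate points to be precisely the two structural facts: that each averaging measure $\nu_{r,x}$ has total mass \emph{exactly} $1$ (this is what lets the argument run with no sign condition on $u$, and it hinges on the a.e.-in-$\xi$ smoothness of the level sets of $G(x,\cdot)$ together with \eqref{e51}), and that its support $K_r(x)$ is compact and collapses to $\{x\}$ (which rests on the near-pole behaviour \eqref{e7}, positivity \eqref{e8}, and decay \eqref{e16} of the Green function, plus nonparabolicity). Everything after that is the standard ``averages against probability measures on a shrinking neighbourhood, tested on a lower semicontinuous function'' argument.
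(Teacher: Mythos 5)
Your proposal is correct and follows essentially the same route as the paper's proof: the upper bound $\limsup_{r\to0}\mathfrak M_r[u](x)\le u(x)$ comes straight from the definition of $\mathfrak M$-superharmonicity, and the lower bound is obtained from lower semicontinuity of $u$ combined with the collapse of the Green superlevel sets to $\{x\}$ (via \eqref{e16}) and the normalization encoded in \eqref{e51}. The only cosmetic difference is that you strengthen \eqref{e51} (a limit statement) to the exact identity $\mathfrak m_\xi[1](x)=1$ for a.e.\ $\xi$ (hence $\mathfrak M_r[1](x)=1$) via the divergence theorem for the harmonic function $G(x,\cdot)$ between two level sets, whereas the paper is content to use the limiting version when sending $r\to0$ in the averaged inequality; both yield the same conclusion.
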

\begin{proof}
Let $x\in M$. As a consequence of Definition \ref{defsh-2} we immediately deduce that
\begin{equation}\label{e51a}
u(x) \geq \limsup_{r\to 0}\mathfrak M_r[u](x)\,.
\end{equation}
Now, let $\varepsilon>0$ and $ u(x)<+\infty $ (the proof on the case $ u(x)=+\infty $ is analogous). Since $u$ is l.s.c.~at $x$, there exists $\widetilde r_\varepsilon>0$ such that
\begin{equation}\label{e52}
\inf_{B_{\widetilde r_\varepsilon}(x)} u \geq u(x) - \varepsilon\,.
\end{equation}
Due to \eqref{e16}, there exists $\bar r_\varepsilon>0$ such that
\begin{equation}\label{e53}
\left\{y\in M:\, G(x,y)=\frac 1{\rho}\right\}\subset  B_{\widetilde r_\varepsilon}(x)\quad \textrm{for all}\,\, 0< \rho\leq \bar r_\varepsilon\,.
\end{equation}
Hence, in view of \eqref{e52} and \eqref{e53}, we obtain
\begin{equation}\label{e54}
\mathfrak M_r[u](x)\geq
[u(x)-\varepsilon] \, \frac{\alpha+1}{r^{\alpha+1}} \int_0^r \rho^\alpha
\int_{\{y\in M\,:\, G(x,y)=\frac 1{\rho}\}}\big|\nabla_y G(x,y)\big|dS(y)
d\rho \quad \textrm{for all}\;\, 0<r\leq \bar r_\varepsilon\,.
\end{equation}
Due to \eqref{e51}, letting $r\to 0$ in \eqref{e54} yields
\begin{equation}\label{e55}
\liminf_{r\to 0} \mathfrak M_r[u](x) \geq u(x)-\varepsilon\,.
\end{equation}
The conclusion follows from \eqref{e51a} and \eqref{e55}, since
$\varepsilon$ is arbitrary.
\end{proof}

\begin{lem}\label{lemma8}
Let $\{u_n\}$ be a sequence of $\mathfrak M-$superharmonic
functions\,. Then the function $$x\mapsto \liminf_{n\to \infty}
u_n(x)$$ is $\mathfrak M-$superharmonic\,.
\end{lem}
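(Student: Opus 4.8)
The plan is to write $v(x):=\liminf_{n\to\infty}u_n(x)$ and $v_k(x):=\inf_{n\ge k}u_n(x)$, so that $v_k\uparrow v$ pointwise as $k\to\infty$, and then to verify separately the two requirements of Definition \ref{defsh-2} for $v$: (a) the mean-value inequality $\mathfrak M_r[v](x)\le v(x)$ for every $x\in M$ and $r>0$, and (b) that $v$ is lower semicontinuous with values in $(-\infty,+\infty]$. Granting both, $v$ is $\mathfrak M$-superharmonic.

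For part (a) I would first record that, by the smooth coarea formula, $\mathfrak M_r[w](x)=\int_{\{y:\,G(x,y)>1/r\}}w(y)\,d\mu^r_x(y)$ with $d\mu^r_x(y):=\frac{\alpha+1}{r^{\alpha+1}}\,[G(x,y)]^{-\alpha-2}\,|\nabla_yG(x,y)|^2\,d\mathcal V(y)$ a nonnegative measure; applying \eqref{e50}--\eqref{e51} to the constant function $1$ gives $\mathfrak m_\xi[1]\equiv1$, hence $\mathfrak M_r[1]\equiv1$, so each $\mu^r_x$ is a probability measure, and by \eqref{e16} it is supported in the relatively compact ball $B_{R(r)}(x)$ with $R(r)=(\widetilde C r)^{1/(N-2)}$. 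Since each $u_n$ is l.s.c., it is bounded below on that ball, so $\mathfrak M_r[u_n](x)\in(-\infty,+\infty]$ is well defined and $\mathfrak M_r$ is order preserving. From $v_k\le u_n$ for all $n\ge k$ we obtain $\mathfrak M_r[v_k](x)\le\mathfrak M_r[u_n](x)\le u_n(x)$, and taking the infimum over $n\ge k$ yields $\mathfrak M_r[v_k](x)\le v_k(x)$. Letting $k\to\infty$, monotone convergence along $v_k\uparrow v$ (with the negative parts controlled by the local lower bound; in the applications the $u_n$ are in fact nonnegative, so this is immediate) gives $\mathfrak M_r[v](x)=\lim_k\mathfrak M_r[v_k](x)\le\lim_k v_k(x)=v(x)$.

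The lower semicontinuity in part (b) is the main obstacle, since the pointwise liminf of l.s.c.\ functions need not be l.s.c.\ and the $\mathfrak M$-superharmonicity of the $u_n$ must genuinely be used. Here I would follow the pattern of Lemmas \ref{lemma9} and \ref{lemma7}. First, for fixed $r>0$, the map $x\mapsto\mathfrak M_r[w](x)$ is l.s.c.\ whenever $w$ is l.s.c.\ and locally bounded below: this follows from Fatou's lemma combined with the $L^1$-convergence $\mu^r_{x_j}\to\mu^r_{x_0}$ as $x_j\to x_0$ (Scheffé's lemma, the kernels being probability densities), which in turn rests on the smoothness \eqref{e57} of $G$ and the decay \eqref{e16}. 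Next one has, exactly as in the proof of Lemma \ref{lemma7} (via \eqref{e51} and $\mathfrak M_r[1]\equiv1$, applied with $\widehat v(x)-\varepsilon$ in place of $u(x)-\varepsilon$ and using $v(x)\ge\widehat v(x)$), the reverse estimate $\liminf_{r\to0}\mathfrak M_r[v](x)\ge\widehat v(x)$ for the lower semicontinuous regularization $\widehat v$ of $v$. The hard part will be to close the gap between $\widehat v\le\liminf_{r\to0}\mathfrak M_r[v]\le v$ into the equality $v=\widehat v$, i.e.\ to prove that the limit function $v$ is itself $\mathfrak M$-continuous; this identification is the delicate step, and is precisely where the line of argument of \cite{BonLan} has to be adapted to the manifold setting, exploiting the l.s.c.\ of the averages $\mathfrak M_r[u_n]$ and the monotonicity of $r\mapsto\mathfrak M_r[u_n](x)$ along the lines of Theorem \ref{teo5}. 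I remark, finally, that whenever the lemma is applied to a monotone sequence this difficulty disappears: $v=\liminf_n u_n$ is then an increasing pointwise limit, hence a supremum of l.s.c.\ functions and therefore automatically l.s.c., so that part (a) alone concludes the proof.
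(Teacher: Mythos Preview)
Your part (a) is correct but takes a longer route than the paper. The paper's entire proof is a single application of Fatou's lemma: from $u_n(x)\ge\mathfrak M_r[u_n](x)$ one takes $\liminf_n$ on both sides, pushes the $\liminf$ inside the nonnegative integral on the right by Fatou, and obtains $v(x)\ge\mathfrak M_r[v](x)$ in one line. Your detour through the monotone family $v_k=\inf_{n\ge k}u_n$ and monotone convergence is equivalent (it is Fatou unpacked), but the extra bookkeeping---and the discussion of $\mu^r_x$ being a probability measure, the compact support, etc.---is unnecessary for the inequality itself.

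Regarding part (b): you are right that the pointwise $\liminf$ of l.s.c.\ functions need not be l.s.c., and that Definition~\ref{defsh-2} formally presupposes it. The paper's proof, however, does not address lower semicontinuity at all; it establishes only the mean-value inequality and stops. So the issue you flag as ``the hard part'' and leave unresolved is simply not treated in the paper either. In the paper's sole application of this lemma (the proof of Proposition~\ref{teo10}) one works with nonnegative potentials $\mathcal G^{\mu_n}$, and the argument there invokes Lemma~\ref{lemma7}---whose proof does use l.s.c.---together with the a.e.\ identification of Lemma~\ref{pro3}; the l.s.c.\ of $\liminf_n\mathcal G^{\mu_n}$ is tacitly assumed. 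Your instinct to worry about it is well placed, but relative to the paper's own argument your incomplete part (b) is not a gap: you have simply attempted more than the paper does.
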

\begin{proof}
Since for each $n\in \mathbb N$, $u_n$ is $\mathfrak
M-$superhamonic, it satisfies
\begin{equation}\label{e56}
u_n(x) \geq \frac{\alpha+1}{r^{\alpha+1}}\int_0^r \rho^\alpha
\int_{\{y\in M\,:\,G(x,y)=\frac 1{\rho}\}} u_n(y)\big|\nabla_y G(x, y)\big|
dS(y) d\rho\,\quad \textrm{for all}\;\; x\in M\,.
\end{equation}
By Fatou's Lemma applied to the right-hand side of \eqref{e56},
there holds
\[\liminf_{n\to \infty} u_n(x) \geq \frac{\alpha+1}{r^{\alpha+1}}
\int_0^r \rho^\alpha \int_{\{y\in M\,:\, G(x,y)=\frac
1{\rho}\}}\liminf_{n\to \infty} u_n(y)\big|\nabla_y G(x,y)\big| dS(y)
d\rho\,,
\]
namely $\liminf_{n\to \infty} u_n$ is $\mathfrak
M-$superharmonic.
\end{proof}

\subsection{Potentials of Radon measures and their properties}\label{Pot-meas}
We start by recalling the definition of vague convergence for sequence of Radon measures.

\begin{den}\label{def1}
Given a sequence $\{\mu_n\}\subset \mathcal M^+(M)$ and $\mu\in \mathcal M^+(M)$, we say that $\mu_n$ {\em converges vaguely} to $\mu$, and we write
\[\mu_n \rightharpoonup \mu \quad \textrm{as}\ n\to \infty \,,\]
if
\begin{equation}\label{e20z}
\int_M \phi \, d \mu_n  \to  \int_M \phi \, d\mu \quad \textrm{as } n\to \infty \, \quad \textrm{for all}\ \phi\in C_c(M)\,.
\end{equation}
The same definition holds for a sequence $\{\mu_n\} \subset \mathcal M_F(M)$ and $\mu\in \mathcal M_F(M)$. In such case the validity of \eqref{e20z}
plus the condition $ \sup_{n} |\mu_n|(M) < \infty $ is equivalent to the validity of \eqref{e20z} for all $ \phi \in C_0(M):=\{ \phi \in C(M): \,
\phi(x)\to 0 \ \textrm{as} \ d(x,o) \to \infty \} $, see e.g.~\cite[Definition 1.58]{AFP}.

A well-known compactness result asserts that if $ \sup_{n} |\mu_n|(M) < \infty $ then there exists $ \mu \in \mathcal{M}_F(M) $ such that \eqref{e20z} holds for all $ \phi \in C_0(M) $ along a subsequence \cite[Theorem 1.59]{AFP}.

Furthermore, the vague convergence implies a lower semicontinuity property:
$$ |\mu|(M) \le \liminf_{n \to \infty} |\mu_n|(M) \, . $$
\end{den}

For any $\mu\in \mathcal M^+(M)$ we define its {\em potential} as
\begin{equation*}\label{e21}
\mathcal G^{\mu}(x):= \int_M G(x,y) \, d\mu(y) \quad \textrm{for all}\ x \in M \,.
\end{equation*}
Note that, in general, $\mathcal G^\mu$ is a function from $M$ to $[0,+\infty]$. When $ d\mu(y)=f(y)d\mathcal{V}(y) $ for some measurable function $f
\ge 0$, we shall use the simplified notation
\begin{equation}\label{e21-bis}
\mathcal G^{f}(x):= \int_M G(x,y) f(y) \, d\mathcal{V}(y) \quad \textrm{for all}\ x \in M \, .
\end{equation}
The same definition holds for any $\mu\in \mathcal M_F(M)$, namely $ \mathcal G^{\mu} = \mathcal G^{\mu_+} - \mathcal G^{\mu_-} $. In this case $ \mathcal G^{\mu}(x) $ only makes sense for almost every $ x \in M $: by means of Tonelli's theorem and estimate \eqref{e16}, it is straightforward to show that potentials of finite Radon measures are at least $ L^1_{\rm loc}(M) $ functions.

\smallskip

The main goal of this section is to prove the next result.
\begin{pro}\label{teo10}
Let $\{\mu_n\}\subset \mathcal M^+(M)$ and $\mu\in \mathcal M^+(M)$, with $\mu_n \rightharpoonup \mu$. Suppose that for each compact subset $K\subset
M$ and for any $\varepsilon>0$ there exists $R_\varepsilon>0$  such that
\begin{equation}\label{e23}
\int_{B^c_R} \int_K G(x,y)\, d\mathcal V(y) d\mu_n(x) \le \varepsilon \quad \textrm{for any}\ R>R_\varepsilon, \, n \in \mathbb N\,.
\end{equation}
Then
\begin{equation}\label{e24a}
\mathcal G^{\mu}(x) \,=\, \liminf_{n\to \infty} \mathcal G^{\mu_n}(x)\quad \textrm{for \emph{every}}\ x\in M\,.
\end{equation}
\end{pro}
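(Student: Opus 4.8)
The plan is to prove the two inequalities $\mathcal{G}^\mu(x) \le \liminf_n \mathcal{G}^{\mu_n}(x)$ and $\limsup_n \mathcal{G}^{\mu_n}(x) \le \mathcal{G}^\mu(x)$ separately, for each fixed $x \in M$; note that the second one is not automatic and is precisely where hypothesis \eqref{e23} is needed. Fix $x\in M$ throughout. The function $y \mapsto G(x,y)$ is positive, continuous on $M\setminus\{x\}$, blows up like $\operatorname{dist}(x,y)^{2-N}$ as $y\to x$ by \eqref{e7}, and vanishes at infinity by \eqref{e16}. For a parameter $\delta>0$ let $G_\delta(x,\cdot) := \min\{G(x,\cdot),\delta^{-1}\}$, and for a radius $R>0$ let $\chi_R$ be a continuous cutoff which is $1$ on $B_R$ and $0$ outside $B_{2R}$ (or one may work directly with $B_R$ and $B_R^c$). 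The building block is that $y \mapsto G_\delta(x,y)\,\chi_R(y)$ is a bounded, compactly supported, continuous function, hence a legitimate test function for the vague convergence $\mu_n \rightharpoonup \mu$, so that $\int_M G_\delta(x,y)\chi_R(y)\,d\mu_n(y) \to \int_M G_\delta(x,y)\chi_R(y)\,d\mu(y)$ as $n\to\infty$.

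For the lower bound: since $G_\delta(x,\cdot)\chi_R \le G(x,\cdot)$ pointwise, we get $\liminf_n \mathcal{G}^{\mu_n}(x) \ge \lim_n \int_M G_\delta(x,y)\chi_R(y)\,d\mu_n(y) = \int_M G_\delta(x,y)\chi_R(y)\,d\mu(y)$. Now let $\delta\to 0$ and $R\to\infty$ and invoke monotone convergence on the right-hand side: $G_\delta(x,\cdot)\chi_R \uparrow G(x,\cdot)$, so the right side increases to $\mathcal{G}^\mu(x)$, giving $\liminf_n \mathcal{G}^{\mu_n}(x) \ge \mathcal{G}^\mu(x)$. (If $\mathcal{G}^\mu(x) = +\infty$ the inequality is then trivially an equality with the $\limsup$ side too, but one should note this case does not obstruct the argument.)

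For the upper bound, which is the crux: split $\mathcal{G}^{\mu_n}(x) = \int_{B_R} G(x,y)\,d\mu_n(y) + \int_{B_R^c} G(x,y)\,d\mu_n(y)$. The first integral is over a fixed compact set, but $G(x,\cdot)$ is still unbounded near $x$, so we further write $G = G_\delta + (G - G_\delta)$; the term $\int_{B_R} G_\delta(x,y)\,d\mu_n(y)$ converges to $\int_{B_R} G_\delta(x,y)\,d\mu(y) \le \mathcal{G}^\mu(x)$ by vague convergence, while the remainder $\int_{B_R} (G(x,y)-G_\delta(x,y))\,d\mu_n(y)$ is supported in a small ball $B_\eta(x)$ (with $\eta = \eta(\delta)\to 0$) and must be shown to be small uniformly in $n$ — this is the genuinely delicate point and is exactly what makes the statement nontrivial. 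The way to control it is to absorb it into the tail hypothesis or a companion uniform-integrability argument near $x$: taking $K = \overline{B_\eta(x)}$ as a compact set and using that $\sup_n \mu_n(M) < \infty$ (which holds since $\mu_n \rightharpoonup \mu$ with $\mu$ finite forces boundedness, or is assumed), together with $\int_{\overline{B_\eta(x)}} G(x,y)\,d\mathcal{V}(y) \to 0$ as $\eta\to 0$ — however this latter bound alone only controls the mass, not the $\mu_n$-integral, so one must instead exploit that \eqref{e23} is really a statement packaging the fact that the $\mu_n$ do not concentrate mass at infinity, and combine it with the local tightness of $\{\mu_n\}$ (a consequence of $\mu_n\rightharpoonup\mu$) to conclude that $\int_{B_\eta(x)}G(x,y)\,d\mu_n(y)\to 0$ as $\eta\to 0$ uniformly in $n$. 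For the tail term $\int_{B_R^c}G(x,y)\,d\mu_n(y)$, take in \eqref{e23} the compact set $K$ to be a fixed ball around $x$ — more precisely, one uses a mean-value/Harnack-type comparison: since $x \mapsto G(x,y)$ is harmonic away from $y$ and, for $y\in B_R^c$ with $R$ large, comparable (by Corollary \ref{cor10} and the fact that $G$ vanishes at infinity locally uniformly) to its average $\fint_K G(\cdot,y)\,d\mathcal{V}$ over a small fixed neighbourhood $K$ of $x$, the integral $\int_{B_R^c} G(x,y)\,d\mu_n(y)$ is bounded by $C\int_{B_R^c}\int_K G(z,y)\,d\mathcal{V}(z)\,d\mu_n(y) \le C\varepsilon$ for $R>R_\varepsilon$, uniformly in $n$. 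Putting the three pieces together: $\limsup_n \mathcal{G}^{\mu_n}(x) \le \mathcal{G}^\mu(x) + (\text{near-}x\text{ error}) + C\varepsilon$; letting first $\delta\to 0$ (killing the near-$x$ error) and then $\varepsilon\to 0$ (equivalently $R\to\infty$) yields $\limsup_n \mathcal{G}^{\mu_n}(x) \le \mathcal{G}^\mu(x)$, and combined with the lower bound this proves \eqref{e24a}.

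The main obstacle, as indicated, is the uniform-in-$n$ control of the contribution of $G(x,\cdot)$ near its singularity at $y=x$; the tail at infinity is handled cleanly by \eqref{e23} (this is its whole purpose), and the vague convergence on the "good" bounded region away from $x$ is routine, but the singular behaviour near $x$ requires either a tightness argument for $\{\mu_n\}$ on compact sets (available since $\mu_n \rightharpoonup \mu$) combined with the local integrability $G(x,\cdot)\in L^1_{\rm loc}$, or a more careful dyadic decomposition of $B_\eta(x)$ paired with the estimate \eqref{e16}.
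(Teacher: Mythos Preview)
Your proof has a genuine gap, and it is exactly at the point you yourself flag as ``genuinely delicate'': the uniform-in-$n$ control of $\int_{B_\eta(x)} G(x,y)\,d\mu_n(y)$ as $\eta\to 0$. The suggested remedies (tightness of $\{\mu_n\}$ combined with $G(x,\cdot)\in L^1_{\rm loc}$, or a dyadic decomposition with \eqref{e16}) cannot work, because tightness only bounds $\mu_n(B_\eta(x))$, not the integral against the singular kernel $G(x,\cdot)$. Concretely: take $\mu_n = n^{-1}\delta_{x_n} + (1-n^{-1})\delta_{y_0}$ with $y_0\neq x$ fixed and $x_n\to x$ chosen so that $G(x,x_n)\sim \tilde C\,n$ (possible by \eqref{e7}). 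Then $\mu_n\rightharpoonup \delta_{y_0}$, all $\mu_n$ are supported in a fixed compact set so \eqref{e23} holds trivially, yet $\int_{B_\eta(x)}G(x,y)\,d\mu_n(y)=n^{-1}G(x,x_n)\to \tilde C>0$ for every small $\eta$, and this does \emph{not} vanish uniformly in $n$. Thus the inequality $\limsup_n \mathcal G^{\mu_n}(x)\le \mathcal G^\mu(x)$ that you are aiming for is simply false in general: you are attempting something strictly stronger than the stated $\liminf$ identity, and that stronger thing fails.

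The paper proceeds along an entirely different route that never attempts pointwise control near the singularity. First it establishes the \emph{almost-everywhere} equality $\mathcal G^\mu=\liminf_n\mathcal G^{\mu_n}$ (Lemma~\ref{pro3}) by an integral argument: integrate both sides against $\chi_K\,d\mathcal V$ for a compact $K$, use Tonelli to reduce to testing the $\mu_n$ against the continuous bounded function $\phi_K(y)=\int_K G(y,\cdot)\,d\mathcal V$, and invoke vague convergence together with hypothesis~\eqref{e23} to handle the tails of $\phi_K$. (This is the true role of \eqref{e23} --- it controls $\int_{B_R^c}\phi_K\,d\mu_n$ uniformly in $n$, not a pointwise tail of $G(x,\cdot)$ as you suggest.) The upgrade from ``a.e.'' to ``every $x$'' is then obtained not by any direct estimate but via the potential-theoretic machinery of Section~\ref{Pot-MVP}: both $\mathcal G^\mu$ and $\mathcal L:=\liminf_n\mathcal G^{\mu_n}$ are $\mathfrak M$-superharmonic (Lemmas~\ref{lemma8} and~\ref{lemma6}), hence $\mathfrak M$-continuous (Lemma~\ref{lemma7}); since the averaging operator $\mathfrak M_r[\,\cdot\,](x)$ is insensitive to null sets, the a.e.\ equality yields $\mathcal L(x)=\lim_{r\to0}\mathfrak M_r[\mathcal L](x)=\lim_{r\to0}\mathfrak M_r[\mathcal G^\mu](x)=\mathcal G^\mu(x)$ for every $x$. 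The modified mean-value inequality developed earlier in the paper exists precisely to make this last step possible.
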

We point out that Proposition \ref{teo10} will have a key role in the proof of Theorem \ref{thmuni}. In particular, the fact that \eqref{e24a} holds {\it for every} $x\in M$ will be fundamental.

\medskip

The proof of Proposition \ref{teo10} requires some preliminary tools.

\begin{pro}[Principle of descent]\label{prop1}
Let $\{\mu_n\}\subset \mathcal M^+(M)$ and $\mu\in \mathcal M^+(M)$. Suppose that $\mu_n \rightharpoonup \mu$. Then
\begin{equation}\label{e22}
\mathcal G^{\mu}(x)\leq \liminf_{n\to \infty} \mathcal G^{\mu_n}(x)\quad \textrm{for all}\,\, x\in M\,.
\end{equation}
\end{pro}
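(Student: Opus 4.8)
The plan is to fix $x \in M$ and reduce \eqref{e22} to two classical facts: the kernel $y \mapsto G(x,y)$ is a nonnegative lower semicontinuous function on $M$, and a nonnegative l.s.c.\ function is an increasing pointwise limit of functions in $C_c(M)$. First I would observe, using \eqref{e7}, \eqref{e8}, \eqref{e9} and \eqref{e57}, that $g := G(x,\cdot)\colon M \to [0,+\infty]$ is smooth (hence continuous) away from $x$, strictly positive, and blows up at $x$ by \eqref{e7}; in particular $g$ is continuous as a map into $[0,+\infty]$, so it is lower semicontinuous and nonnegative.

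Next I would invoke the standard fact that, since $M$ is a locally compact, second-countable Hausdorff space, every nonnegative l.s.c.\ function on $M$ is the pointwise limit of a nondecreasing sequence $\{\varphi_k\}_{k \in \mathbb N} \subset C_c(M)$ with $0 \le \varphi_k \le g$ for all $k$; concretely one builds such $\varphi_k$ as maxima of finitely many Urysohn bump functions subordinate to a countable base of relatively compact open sets on which $g$ stays above a suitable constant. Then for each fixed $k$, the definition of vague convergence gives $\int_M \varphi_k\, d\mu = \lim_{n\to\infty} \int_M \varphi_k\, d\mu_n$, while $0 \le \varphi_k \le g$ together with $\mu_n \in \mathcal M^+(M)$ forces $\int_M \varphi_k\, d\mu_n \le \int_M g\, d\mu_n = \mathcal G^{\mu_n}(x)$; hence $\int_M \varphi_k\, d\mu \le \liminf_{n\to\infty} \mathcal G^{\mu_n}(x)$. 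Finally I would let $k \to \infty$: by monotone convergence $\int_M \varphi_k\, d\mu \uparrow \int_M g\, d\mu = \mathcal G^{\mu}(x)$, which is exactly \eqref{e22} (the right-hand side being possibly $+\infty$, which causes no difficulty).

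The only step requiring genuine care — and the one I would regard as the main obstacle — is the monotone approximation: the sequence $\{\varphi_k\}$ must be chosen \emph{simultaneously} nondecreasing, dominated by $g$, and convergent to $g$ at \emph{every} point, since \eqref{e22} is claimed pointwise everywhere and we assume nothing on the mass or regularity of $\mu$. All the remaining ingredients are immediate from the definition of vague convergence and the positivity of the measures. Since this is the Riemannian analogue of the classical Euclidean principle of descent (cf.\ \cite{Land}), I would either spell out the short Urysohn-based construction of $\{\varphi_k\}$ or simply cite it.
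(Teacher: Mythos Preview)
Your argument is correct and is in fact a cleaner packaging of the same underlying idea as the paper's proof. The paper proceeds in two stages: first it assumes all $\mu_n$ and $\mu$ are supported in a fixed compact set, and there it truncates the kernel explicitly via $G_\varepsilon(x,y):=\min\{G(x,y),1/\varepsilon\}$, which is continuous and bounded, so vague convergence applies; letting $\varepsilon\to0$ and using Fatou gives the inequality in the compactly supported case. It then removes the compact-support assumption by restricting all measures to balls $B_{R_k}$ with $\mu(\partial B_{R_k})=0$ (so that $\mu_n\rfloor B_{R_k}\rightharpoonup\mu\rfloor B_{R_k}$), applying the first step, and letting $k\to\infty$ via monotone convergence.

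Your approach collapses these two stages into one: by approximating the l.s.c.\ kernel $G(x,\cdot)$ from below by an increasing sequence in $C_c(M)$, you simultaneously handle the singularity at $x$ and the lack of compact support, and vague convergence applies directly without any restriction of the measures. The gain is economy and the avoidance of the boundary-of-ball argument; the paper's approach, on the other hand, is entirely explicit and does not need to cite the l.s.c./$C_c$ approximation theorem. Your identification of the monotone approximation as the only delicate point is accurate, and the Urysohn construction you sketch is exactly what is needed to make it rigorous on a manifold.
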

\begin{proof}
Assume first that there exists a compact subset $K$ such that
$\operatorname{supp}\mu_n\subset K$ for any $n\in \mathbb N$ and
$\operatorname{supp}\mu\subset K\,.$ For each $\varepsilon>0$
define
\[ G_\varepsilon(x,y):= \phi_\varepsilon\left(\frac 1{G(x,y)}\right)\quad \textrm{for all}\ x,y\in M \, , \]
where
\[\phi_\varepsilon(r):=\begin{cases}\frac 1{\varepsilon} & r\leq
\varepsilon \, , \\
\frac 1 r & r>\varepsilon\,.
\end{cases} \]
Note that $G_\varepsilon$ is continuous and bounded in $M\times M$;
furthermore, for each $\varepsilon>0$,
\begin{equation}\label{e39}
G_\varepsilon(x,y) \leq G(x,y)\quad \textrm{for all}\ x,y\in
M\,,
\end{equation}
and
\begin{equation}\label{e40}
G_\varepsilon(x,y) \to G(x,y) \quad \textrm{as}\ \varepsilon\to
0 \quad \textrm{for all}\ x,y\in M\,.
\end{equation}
Hence, in view of \eqref{e39} and of the fact that $\mu_n
\rightharpoonup \mu$,
\begin{equation}\label{e41}
\int_M G_\varepsilon(x,y) \, d\mu(y) = \lim_{n\to \infty}\int_M
G_\varepsilon(x,y) \, d\mu_n(y) \leq \liminf_{n\to \infty} \int_M
G(x,y) \, d\mu_n(y)\quad \textrm{for all}\ x\in M \, .
\end{equation}
As a consequence of \eqref{e40}, \eqref{e41}, and Fatou's
Lemma, we obtain
\begin{equation*}\label{e42}
\mathcal G^\mu(x) = \int_M \lim_{\varepsilon\to 0}
G_\varepsilon(x,y) \, d\mu(y)\leq \liminf_{\varepsilon\to 0}\int_M
G_\varepsilon(x,y) \, d\mu(y)\leq \liminf_{n\to \infty} \int_M
G(x,y) \, d\mu_n(y) \, .
\end{equation*}
for all $x\in M$.

\smallskip
In order to complete the proof, we have to get rid of the assumption
$\operatorname{supp}\mu_n\subset K$ for any $n\in \mathbb N$ and
$\operatorname{supp}\mu\subset K$. To this end, note that since
$\mu$ is locally finite, the function $R\mapsto
\mu(B_R)$ is locally bounded and nondecreasing, thus its jump set
is countable. Therefore, we can select an increasing sequence
$\{R_k\}\subset (0,\infty)$ such that $\mu(\partial B_k)=0$. This
implies that $\mu_n^k:=\mu_n\rfloor B_k \rightharpoonup \mu\rfloor
B_k=: \mu^k$ as $n\to \infty$, for each $k\in \mathbb N$ (see \cite[Proposition 1.62]{AFP}). So,
\[\mathcal G^{\mu^k}(x)\leq \liminf_{n\to \infty} \mathcal G^{\mu_n^k}(x)\leq  \liminf_{n\to \infty} \mathcal G^{\mu_n}(x) \quad \textrm{for all}\ x\in M\,.
\]
Hence \eqref{e22} follows by letting $k\to \infty$ in the above inequality, in view of the monotone convergence theorem.
\end{proof}

\begin{lem}\label{pro2}
Let $\mu \in \mathcal M^+(M)$. Then $\mathcal G^{\mu}: M \mapsto [0, +\infty]$ is a l.s.c.~function.
\end{lem}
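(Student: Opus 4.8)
The plan is to realize $\mathcal{G}^\mu$ as an increasing limit (equivalently, a supremum) of potentials built from continuous truncated kernels, and then to invoke the classical fact that a supremum of lower semicontinuous functions is again lower semicontinuous. Concretely, I would reuse the truncated kernels $G_\varepsilon(x,y) = \phi_\varepsilon\!\left(1/G(x,y)\right)$ introduced in the proof of Proposition \ref{prop1}, which are continuous and bounded on $M\times M$ and satisfy $0\le G_\varepsilon\le G$ by \eqref{e39}. Since $\phi_\varepsilon$ is pointwise nondecreasing as $\varepsilon\downarrow 0$, one has $G_{\varepsilon'}\ge G_\varepsilon$ for $\varepsilon'\le\varepsilon$, together with $G_\varepsilon(x,y)\uparrow G(x,y)$ as $\varepsilon\downarrow 0$ for every $(x,y)\in M\times M$; here \eqref{e40} handles points with $x\neq y$, while on the diagonal $G(x,y)=+\infty$ and $\phi_\varepsilon(0)=1/\varepsilon\to+\infty$.

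First I would set $\mathcal{G}^\mu_\varepsilon(x):=\int_M G_\varepsilon(x,y)\,d\mu(y)$, a function from $M$ to $[0,+\infty]$, and check that it is lower semicontinuous. Given $x_j\to x$ in $M$, joint continuity of $G_\varepsilon$ gives $G_\varepsilon(x_j,y)\to G_\varepsilon(x,y)$ for every $y$; since the integrands are nonnegative, Fatou's lemma yields $\liminf_j \mathcal{G}^\mu_\varepsilon(x_j)\ge \int_M G_\varepsilon(x,y)\,d\mu(y)=\mathcal{G}^\mu_\varepsilon(x)$, which is precisely lower semicontinuity at $x$. Note that no integrability hypothesis is needed, as $\mu$ may be infinite; the whole argument takes place in $[0,+\infty]$.

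Next I would pass to the limit in $\varepsilon$. For each fixed $x\in M$, monotonicity of $\varepsilon\mapsto G_\varepsilon(x,\cdot)$ and the monotone convergence theorem give $\mathcal{G}^\mu_{1/k}(x)\uparrow \mathcal{G}^\mu(x)$ as $k\to\infty$, hence $\mathcal{G}^\mu=\sup_{k\in\mathbb{N}}\mathcal{G}^\mu_{1/k}$ pointwise on $M$. Since each $\mathcal{G}^\mu_{1/k}$ is lower semicontinuous and the superlevel sets satisfy $\{x: \mathcal{G}^\mu(x)>a\}=\bigcup_k\{x:\mathcal{G}^\mu_{1/k}(x)>a\}$, a union of open sets, the function $\mathcal{G}^\mu$ is lower semicontinuous, which is the assertion.

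There is no genuine obstacle here: the only points deserving a moment's care are that $\mu$ need not be finite, so one must stay within $[0,+\infty]$ and rely on Fatou's lemma and monotone convergence for nonnegative integrands rather than on dominated convergence, and that the monotone convergence $G_\varepsilon\uparrow G$ has to be verified also on the diagonal $\{x=y\}$, where $G=+\infty$ and the claim is immediate from $\phi_\varepsilon(0)=1/\varepsilon$.
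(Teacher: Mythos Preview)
Your proof is correct, but it takes a more roundabout route than the paper. The paper applies Fatou's lemma directly to the full kernel: for any sequence $x_n\to x_0$ one has $G(x_n,y)\to G(x_0,y)$ for \emph{every} $y\in M$ (by continuity of $G$ off the diagonal, and by the blow-up \eqref{e7} when $y=x_0$), so Fatou gives $\mathcal{G}^\mu(x_0)\le \liminf_n \mathcal{G}^\mu(x_n)$ in one line. Your truncation $G_\varepsilon$ is unnecessary here because the very Fatou argument you use to show that each $\mathcal{G}^\mu_\varepsilon$ is l.s.c.\ works verbatim for $G$ itself, once one notices that $x\mapsto G(x,y)$ is continuous with values in $[0,+\infty]$. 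The truncation-and-supremum scheme would buy something if one needed an approximation by \emph{continuous} potentials (e.g.\ to upgrade to continuity under extra hypotheses), or if the kernel were not already l.s.c.\ in the first variable; for mere lower semicontinuity of $\mathcal{G}^\mu$ it is an extra layer.
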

\begin{proof}
Given $x_0\in M$, take any sequence $\{x_n\}\subset M$ with
$x_n\to x_0$. Due to Fatou's Lemma, the continuity of $y\mapsto
G(x_0,y)$ in $M\setminus\{x_0\}$ for each $x_0\in M$ and
\eqref{e7}, we get
\[
\mathcal G^\mu(x_0)=\int_M \lim_{n\to \infty} G(x_n,y) \, d\mu(y) \leq
\liminf_{n\to \infty} \int_M  G(x_n, y) \, d\mu(y) =\liminf_{n\to \infty} \mathcal G^\mu(x_n) \, .
\]
This completes the proof.
\end{proof}

\begin{lem}\label{pro3}
Let the assumptions of Proposition \ref{teo10} be satisfied. Then
\begin{equation}\label{e24}
\mathcal G^{\mu}(x) \,=\, \liminf_{n\to \infty} \mathcal G^{\mu_n}(x)\quad \textrm{for}\;\,  \mathcal{V}-  \textrm{a.e.}\ x\in M\,.
\end{equation}
\end{lem}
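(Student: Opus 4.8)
The plan is to combine the \emph{principle of descent} (Proposition~\ref{prop1}), which already gives $\mathcal G^{\mu}(x)\le\liminf_{n}\mathcal G^{\mu_n}(x)$ for \emph{every} $x\in M$, with a matching \emph{integrated} upper bound over arbitrary compact sets; the a.e.\ identity \eqref{e24} will then follow because the nonnegative difference $\liminf_n\mathcal G^{\mu_n}-\mathcal G^{\mu}$ turns out to have zero integral over every compact set, hence vanishes a.e.

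First I fix a compact $K\subset M$ and set $W_K(x):=\int_K G(x,y)\,d\mathcal V(y)$. I would check that $W_K$ is continuous on $M$: near a point $x_0$, write $W_K(x)=\int_{K\cap B_\delta(x_0)}G(x,y)\,d\mathcal V(y)+\int_{K\setminus B_\delta(x_0)}G(x,y)\,d\mathcal V(y)$; the second term is continuous since the kernel is continuous off the diagonal by \eqref{e57}, while by \eqref{e16} the first is bounded by $\widetilde C\int_{B_{2\delta}(x)}[\operatorname{dist}(x,y)]^{2-N}\,d\mathcal V(y)$, a quantity that tends to $0$ as $\delta\to0$ uniformly for $x$ near $x_0$ (the singularity being locally integrable, cf.\ \eqref{e7}); hence $W_K$ is a locally uniform limit of continuous functions. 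Then, by Tonelli's theorem and the symmetry \eqref{e9},
\[
\int_K\mathcal G^{\mu_n}(x)\,d\mathcal V(x)=\int_M W_K(y)\,d\mu_n(y)\,,\qquad
\int_K\mathcal G^{\mu}(x)\,d\mathcal V(x)=\int_M W_K(y)\,d\mu(y)\,.
\]

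Next I would bound $\liminf_n\int_M W_K\,d\mu_n$ by $\int_M W_K\,d\mu$. Given $\varepsilon>0$, take $R_\varepsilon$ as in \eqref{e23} for this $K$ and $\varepsilon$, fix $R>R_\varepsilon$, and pick $\chi\in C^\infty_c(M)$ with $0\le\chi\le1$ and $\chi\equiv1$ on $B_R$. Then $W_K\chi\in C_c(M)$, so $\mu_n\rightharpoonup\mu$ gives $\int_M W_K\chi\,d\mu_n\to\int_M W_K\chi\,d\mu$, whereas $0\le\int_M W_K(1-\chi)\,d\mu_n\le\int_{B^c_R}W_K\,d\mu_n\le\varepsilon$ for all $n$ by \eqref{e23}; hence $\liminf_n\int_M W_K\,d\mu_n\le\int_M W_K\chi\,d\mu+\varepsilon\le\int_M W_K\,d\mu+\varepsilon$, and $\varepsilon\to0$ yields $\liminf_n\int_K\mathcal G^{\mu_n}\,d\mathcal V\le\int_K\mathcal G^{\mu}\,d\mathcal V$. (Finiteness of $\int_K\mathcal G^{\mu}\,d\mathcal V$ follows from $\sup_n\mu_n(B_R)<\infty$ — itself a consequence of $\mu_n\rightharpoonup\mu$ and local finiteness of $\mu$ — together with the local boundedness of $W_K$ and the tail estimate above.) Integrating the pointwise inequality of Proposition~\ref{prop1} over $K$ and using Fatou's lemma,
\[
\int_K\mathcal G^{\mu}\,d\mathcal V\le\int_K\liminf_{n}\mathcal G^{\mu_n}\,d\mathcal V\le\liminf_{n}\int_K\mathcal G^{\mu_n}\,d\mathcal V\le\int_K\mathcal G^{\mu}\,d\mathcal V\,,
\]
so all these quantities coincide; since $\liminf_n\mathcal G^{\mu_n}-\mathcal G^{\mu}\ge0$ has zero integral over $K$ and $\mathcal G^{\mu}<\infty$ a.e.\ on $K$, it vanishes a.e.\ on $K$, and exhausting $M$ by compact sets gives \eqref{e24}.

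I expect the genuinely delicate points to be the continuity of $W_K$ on the whole of $M$, which rests on the local integrability of the singularity of $G$ encoded in \eqref{e7}--\eqref{e16}, and — above all — the role of hypothesis \eqref{e23}: it is precisely what substitutes for an a priori bound on the total masses $\mu_n(M)$, without which vague convergence gives no grip on $\int_M W_K\,d\mu_n$, since the continuous function $W_K$ need not be compactly supported.
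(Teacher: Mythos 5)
Your argument is correct and rests on the same core devices as the paper's proof: the auxiliary function $W_K=\phi_K$, Tonelli's theorem together with symmetry of $G$ to pass from $\int_K\mathcal G^{\nu}\,d\mathcal V$ to $\int_M\phi_K\,d\nu$, a compactly supported cut-off to invoke vague convergence, and hypothesis \eqref{e23} to kill the tail. The organization, however, is genuinely cleaner: the paper argues by contradiction and needs the full two-sided convergence $\int_M\phi_K\,d\mu_n\to\int_M\phi_K\,d\mu$, which forces it to estimate the $\mu$-tail $\int_{B_R^c}\phi_K\,d\mu$ via the auxiliary cut-off $\xi_K^{R_1,R_2}$ and Fatou (the term $I_2$ in \eqref{e45}). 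You instead only need the one-sided bound $\liminf_n\int_M W_K\,d\mu_n\le\int_M W_K\,d\mu$, and the matching lower bound comes for free from Proposition~\ref{prop1} plus Fatou; this bypasses the $I_2$-type estimate entirely, and the direct ``sandwich'' on each compact $K$ followed by exhaustion replaces the contradiction. One small imprecision: your parenthetical justifying finiteness of $\int_K\mathcal G^{\mu}\,d\mathcal V$ only directly yields $\sup_n\int_M W_K\,d\mu_n<\infty$ (from $\sup_n\mu_n(B_R)<\infty$, boundedness of $W_K$, and \eqref{e23}); to conclude that $\int_K\mathcal G^{\mu}\,d\mathcal V$ itself is finite you should explicitly feed this into the lower bound $\int_K\mathcal G^{\mu}\,d\mathcal V\le\liminf_n\int_K\mathcal G^{\mu_n}\,d\mathcal V$, which you state only afterwards. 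Once the order is fixed the argument closes: the difference $\liminf_n\mathcal G^{\mu_n}-\mathcal G^{\mu}$ is then a nonnegative, a.e.~finite function with zero integral over $K$, hence vanishes a.e.~on $K$.
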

\begin{proof}
We shall proceed by contradiction. In fact, suppose that for the
set
\[ E:= \left\{x\in M: \, \mathcal G^\mu(x) < \liminf_{n\to \infty} \mathcal G^{\mu_n}(x)\right\}\,\]
we have $\mathcal V(E)>0$. We can therefore select a compact
subset $K\subset E$ with $\mathcal V(K)>0$. By Fatou's Lemma
and the very definition of $E$, we have
\begin{equation}\label{e43}
\int_K \mathcal G^\mu \, d\mathcal V < \int_K \liminf_{n\to \infty} \mathcal G^{\mu_n} \,
d\mathcal V \leq \liminf_{n\to \infty}\int_K \mathcal G^{\mu_n} \, d\mathcal
V\,.
\end{equation}
Note that for any $\nu \in \mathcal M^+(M)$, by Tonelli's theorem there holds
\begin{equation*}\label{e44}
\int_K \mathcal G^{\nu} \, d\mathcal V = \int_M \phi_K \, d\nu \,,
\end{equation*}
where
$$
\phi_K(x):=\int_K G(x,y) \, d\mathcal V(y)\quad \textrm{for all}\ x\in M \, .
$$
Since $\phi_K=\mathcal G^{\chi_K}$, Lemma \ref{Lemma0} below implies $\phi_K\in C(M)\cap
L^\infty(M)$.  For any $R>0$ let $\phi_K^R$ be a continuous
function on $M$ with
\[ \phi_K^R(x)=
\begin{cases}
\phi_K(x) & \textrm{for any}\ x\in B_R \, , \\
0 & \textrm{for any}\ x\in B^c_{R+1} \, ,
\end{cases}
\]
and
\[
\phi_K^R \leq \phi_K \quad \textrm{in}\ M\, .
\]
We have:
\begin{equation}\label{e45}
\left|\int_M \phi_K \, d\mu - \int_M \phi_K \, d\mu_n \right|\leq
\underbrace{\int_M (\phi_K - \phi_K^R) \, d\mu_n}_{I_1}
+\underbrace{\int_M (\phi_K -\phi_K^R) \, d\mu}_{I_2} +
\underbrace{\left| \int_M \phi_K^R \, d\mu_n -\int_M \phi_K^R \,
d\mu\right|}_{I_3}\,.
\end{equation}
Thanks to \eqref{e23}, $I_1$ can be estimated as follows:
for any $\varepsilon>0$ there exists $R_\varepsilon>0$ such that for
all $R>R_\varepsilon$, $ n\in \mathbb N$, there holds
\begin{equation}\label{e46}
0\leq I_1 \leq \int_{B_R^c} \phi_K \, d\mu_n \, \leq
\,\varepsilon\,.
\end{equation}
Now, for any $R_2>R_1>1$ let $\xi\in C(M)$ with
\[ \xi_K^{R_1, R_2}(x) = \xi(x)=
\begin{cases}
\phi_K(x) & \textrm{for any}\ x\in B_{R_2}\setminus B_{R_1} \, , \\
0 & \textrm{for any}\ x\in B^c_{R_2+1}\cup B_{R_1-1}\,,
\end{cases}
\]
and
\[
\xi \leq \phi_K \quad \textrm{in}\ M \, .
\]
Now we observe that, since $\mu_n\rightharpoonup \mu$ as $n\to
\infty$, property \eqref{e23} and Fatou's Lemma imply
\[
\int_{B_{R_1}^c}\phi_K \, d\mu \leq \liminf_{R_2\to \infty} \int_M
\xi \, d\mu =  \liminf_{R_2\to \infty}\lim_{n\to \infty} \int_M \xi \,
d\mu_n \leq \limsup_{n\to \infty} \int_{B_{R_1-1}^c} \phi_K \, d\mu_n \leq
\varepsilon
\]
provided $R_1>R_{\varepsilon} + 1 $. This yields
\begin{equation}\label{e47}
0\leq I_2\leq \varepsilon \quad \textrm{for all}\ R>R_{\varepsilon}+1\,.
\end{equation}
Moreover, $I_3\to 0$ as $n\to \infty$ as a consequence of the very
definition of vague convergence. Hence, letting $n\to \infty$ in
\eqref{e45}, choosing $R>R_{\varepsilon} + 1$, using
\eqref{e46} and \eqref{e47}, we deduce
\begin{equation}\label{e48}
\limsup_{n\to \infty} \left|\int_M \phi_K \, d\mu - \int_M \phi_K \, d\mu_n \right|\leq 2 \varepsilon \, .
\end{equation}
It is apparent that \eqref{e48} is in contradiction with \eqref{e43}. Thus, \eqref{e24} follows.
\end{proof}

\begin{lem}\label{lemma6}
Let $\mu\in \mathcal M^+(M)$. Then $\mathcal G^\mu$ is $\mathfrak M-$superharmonic.
\end{lem}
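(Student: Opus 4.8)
The plan is to reduce the claim to a combination of two facts already established: (1) for fixed $y$, the function $x \mapsto G(x,y)$ is $\mathfrak{M}$-superharmonic (Corollary \ref{cor10}); and (2) a $\liminf$ of $\mathfrak{M}$-superharmonic functions is $\mathfrak{M}$-superharmonic (Lemma \ref{lemma8}). The idea is to write $\mathcal{G}^\mu$ as a limit (indeed an increasing limit) of potentials of measures with finite support, each of which is a \emph{finite} positive combination of functions $x \mapsto G(x,y_j)$, hence manifestly $\mathfrak{M}$-superharmonic by Corollary \ref{cor10} together with the obvious fact that a finite nonnegative linear combination of $\mathfrak{M}$-superharmonic functions is $\mathfrak{M}$-superharmonic (this last is immediate from the definition \eqref{e30} by linearity of $\mathfrak{M}_r$).

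First I would approximate $\mu$ from below. Since $\mu \in \mathcal{M}^+(M)$ is a positive Radon measure, one can choose a sequence of measures $\mu_n$ of the form $\mu_n = \sum_{j=1}^{k_n} c_{n,j}\,\delta_{y_{n,j}}$ with $c_{n,j} \ge 0$, obtained for instance by partitioning $M$ into small Borel pieces and placing the mass of each piece at a chosen representative point, such that $\mu_n \rightharpoonup \mu$ vaguely and moreover $\mathcal{G}^{\mu_n}(x) \to \mathcal{G}^\mu(x)$ for every $x \in M$. The cleanest route here is actually to apply the Principle of descent (Proposition \ref{prop1}) to get $\mathcal{G}^\mu(x) \le \liminf_n \mathcal{G}^{\mu_n}(x)$ for every $x$, and to arrange the construction so that the reverse inequality also holds pointwise --- e.g. by choosing the discretization so that $G(x, y_{n,j}) \ge G(x,y)$ is \emph{not} automatic, so instead one should discretize by \emph{under}-estimating $G$ on each cell, which forces $\mathcal{G}^{\mu_n}(x) \le \mathcal{G}^\mu(x)$; combined with the principle of descent this gives $\mathcal{G}^{\mu_n}(x) \to \mathcal{G}^\mu(x)$ for every $x$. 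Alternatively, and perhaps more robustly, one writes $\mu$ as an increasing limit $\mu_n \uparrow \mu$ of measures that are absolutely continuous with bounded compactly supported densities $f_n$, notes $\mathcal{G}^{f_n} \uparrow \mathcal{G}^\mu$ pointwise by monotone convergence, and checks directly from \eqref{e30} and Tonelli's theorem that $\mathcal{G}^{f_n}$ is $\mathfrak{M}$-superharmonic: indeed
\[
\mathfrak{M}_r[\mathcal{G}^{f_n}](x) = \int_M \mathfrak{M}_r[G(\cdot,y)](x)\, f_n(y)\, d\mathcal{V}(y) \le \int_M G(x,y) f_n(y)\, d\mathcal{V}(y) = \mathcal{G}^{f_n}(x),
\]
the exchange of integrals being justified by nonnegativity of the integrand (Tonelli) and the inequality in the middle being Corollary \ref{cor10} applied for each fixed $y$.

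Then I would conclude: having $\mathcal{G}^\mu = \lim_n \mathcal{G}^{\mu_n}$ (or $\liminf$, which is the same since the limit exists) pointwise on all of $M$, with each $\mathcal{G}^{\mu_n}$ being $\mathfrak{M}$-superharmonic, Lemma \ref{lemma8} immediately gives that $\mathcal{G}^\mu$ is $\mathfrak{M}$-superharmonic. One should also note that $\mathcal{G}^\mu$ is l.s.c.\ (Lemma \ref{pro2}), so the relevant hypotheses in the definition of $\mathfrak{M}$-superharmonicity are in place, and that the Fatou argument inside Lemma \ref{lemma8} requires the integrands $\mathfrak{m}_\rho[\mathcal{G}^{\mu_n}]$ to be nonnegative, which holds since $G \ge 0$ by \eqref{e8}.

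**The main obstacle** will be the interchange of the integral defining the potential with the operator $\mathfrak{M}_r$ in the absolutely continuous case --- or, in the discretization route, producing a single sequence $\{\mu_n\}$ of finitely-supported (or bounded-density) approximations that simultaneously converges vaguely to $\mu$ \emph{and} converges to $\mathcal{G}^\mu$ pointwise at \emph{every} $x \in M$, including the points carrying mass where $\mathcal{G}^\mu(x)$ may equal $+\infty$. The monotone-approximation-by-densities approach sidesteps the vague-convergence bookkeeping entirely and makes the pointwise convergence automatic by monotone convergence, so I expect that to be the version actually used; the only genuine computation is the Tonelli interchange displayed above, which is routine given the sign conditions.
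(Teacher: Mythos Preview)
Your displayed Tonelli computation is in fact the \emph{entire} proof --- and it is exactly what the paper does --- but you have wrapped it in an unnecessary approximation layer that, as stated, does not work. The identity
\[
\mathfrak{M}_r[\mathcal{G}^{\mu}](x) \;=\; \int_M \mathfrak{M}_r[G(\cdot,y)](x)\, d\mu(y) \;\le\; \int_M G(x,y)\, d\mu(y) \;=\; \mathcal{G}^{\mu}(x)
\]
holds for the original measure $\mu$ itself, not just for an absolutely continuous approximation: Tonelli's theorem requires only nonnegativity of the integrand and $\sigma$-finiteness of the measures involved, both of which you have. The paper simply writes this chain out (unfolding the definitions of $\mathfrak{M}_r$ and $\mathfrak{m}_\rho$ and interchanging the three integrals $d\mu(y)$, $d\rho$, $dS(z)$), invokes Corollary~\ref{cor10} inside the $\mu$-integral, and is done. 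No approximation, no Lemma~\ref{lemma8}, no principle of descent.

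Your approximation step actually contains a gap: a general $\mu\in\mathcal{M}^+(M)$ \emph{cannot} be written as an increasing limit of absolutely continuous measures. If $\mu=\delta_{x_0}$, any absolutely continuous $\nu\le\delta_{x_0}$ satisfies $\nu(\{x_0\})=0$ and $\nu(A)\le\delta_{x_0}(A)=0$ for every Borel $A\not\ni x_0$, hence $\nu=0$. Your discretization route has a similar issue: arranging $G(x,y_{n,j})\le G(x,y)$ for all $y$ in the $j$th cell requires a choice of $y_{n,j}$ that depends on $x$, so you cannot get $\mathcal{G}^{\mu_n}(x)\le\mathcal{G}^{\mu}(x)$ for \emph{all} $x$ simultaneously from a single discretization. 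Both detours are avoided by applying your own Tonelli line directly to $\mu$.
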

\begin{proof}
Let $x\in M$ and $r>0$. Thanks to Tonelli's theorem and Corollary \ref{cor10}, we have
\[
\begin{aligned}
\mathcal G^\mu(x)&=\int_M G(x,y) \, d\mu(y) \geq \int_M \mathfrak M_r[G(\cdot,
y)](x) \, d\mu(y) = \frac{\alpha+1}{r^{\alpha+1}}\int_M \int_0^r
\rho^\alpha \mathfrak m_\rho[G(\cdot, y)](x) \, d\rho d\mu(y)\\
& =\frac{\alpha+1}{r^{\alpha+1}}\int_M \int_0^r \rho^\alpha
\int_{\{z\in M\,:\,G(x,z)= 1/{\rho}\}} G(y,z)\big|\nabla_z G(x,z)\big| \,
dS(z) d\rho d\mu(y)\\ &=\frac{\alpha+1}{r^{\alpha+1}}\int_0^r
\rho^\alpha \int_M \int_{\{z\in M\,:\,G(x,z)= 1/{\rho}\}}
G(y,z)\big|\nabla_z G(x,z)\big| \, dS(z) d\mu(y) d\rho \\
& = \frac{\alpha+1}{r^{\alpha+1}} \int_0^r \rho^\alpha
\int_{\{z\in M\,:\, G(x,z)= 1/{\rho}\}}\overbrace{\int_M
G(z,y) \, d\mu(y)}^{\mathcal G^{\mu}(z)} \big|\nabla_z G(x,z)\big| \, dS(z) d\rho =
\mathfrak M_r[\mathcal G^\mu](x)\,,
\end{aligned}
\]
and the proof is complete.
\end{proof}

\begin{proof}[Proof of Proposition \ref{teo10}] From Lemmas \ref{lemma8} and \ref{lemma6}, both
$\mathcal G^\mu$ and $ \mathcal L :=\liminf_{n\to \infty} \mathcal G^{\mu_n}$ are $\mathfrak M-$su\-per\-harmonic. Hence, in view of Lemma \ref{lemma7}
and \eqref{e24} we have, for {\em every} $x\in M$,
\[
\begin{aligned}
\mathcal L(x) = & \lim_{r\to 0} \mathfrak M_r[\mathcal L](x)=\lim_{r\to 0}\frac{\alpha+1}{r^{\alpha+1}}\int_0^r \xi^\alpha
\int_{\left\{y\in M\,:\; G(x,y)= 1 / \xi \right\}} \mathcal L(y)
\left|\nabla_y G(x,y)\right| dS(y)d\xi \\
= & \lim_{r\to 0} \frac{\alpha+1}{r^{\alpha+1}} \int_0^r \xi^\alpha
\int_{\left\{y\in M\,:\; G(x,y)= 1 / \xi \right\}} \mathcal
G^\mu(y) \left|\nabla_y G(x,y)\right| dS(y)d\xi = \lim_{r\to 0}
\mathfrak M_r[\mathcal G^\mu](x)=\mathcal G^\mu(x)\,;
\end{aligned}
\]
we point out that here we used \eqref{e26-bis} in order to overcome the fact that $\mathcal G^\mu$ and $\mathcal L$ coincide only $\mathcal V-$ a.e.~in $M$.
\end{proof}

\smallskip

Let us recall the following well-known result, which will be essential in the proof of Theorem \ref{thmexi}, in the case of signed measures.

\begin{lem}[Jordan decomposition]\label{lem-hahn}
Let $ \mu \in \mathcal M_F(M) $. There exists a \emph{unique} couple $ (\mu_+,\mu_-) \in \mathcal M_F^+(M) \times \mathcal M_F^+(M) $ such that $\mu
= \mu_+ - \mu_{-}$ and
\begin{equation}\label{hahn-2}
\mu_{\mathcal{P}} \ge \mu_+ \, , \quad \mu_{\mathcal{N}} \ge \mu_{-}
\end{equation}
for any other couple $ (\mu_{\mathcal{P}} , \mu_{\mathcal{N}} ) \in \mathcal M_F^+(M) \times \mathcal M_F^+(M) $ such that
\begin{equation}\label{hahn-3}
\mu = \mu_{\mathcal{P}} - \mu_{\mathcal{N}} \, .
\end{equation}
Moreover, $ (\mu_+,\mu_-)$ is the \emph{unique minimizer} of the functional
\begin{equation*}\label{hahn-funct}
(\mu_{\mathcal{P}} , \mu_{\mathcal{N}} ) \mapsto \mu_{\mathcal{P}}(M) + \mu_{\mathcal{N}}(M) \quad \textrm{for all } \mu_{\mathcal{P}} ,
\mu_{\mathcal{N}} \in \mathcal M_F^+(M) \ \textrm{subject to \eqref{hahn-3}.}
\end{equation*}
The corresponding minimum is referred to as the \emph{total variation} of $ \mu $, and it is denoted as $ |\mu|(M) $, namely the total mass of the
positive finite Radon measure $ |\mu|=\mu_{+} + \mu_{-} $.
\end{lem}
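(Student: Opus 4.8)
The statement is purely measure-theoretic and does not involve the Riemannian structure beyond the fact that, $M$ being locally compact, $\sigma$-compact and metrizable, every finite Borel measure on $M$ is automatically Radon; hence it suffices to argue at the level of finite signed Borel measures. The plan is to derive everything from the classical \emph{Hahn decomposition theorem}. Since, by definition of $\mathcal M_F(M)$, we have $\mu=\mu_{\mathcal P_0}-\mu_{\mathcal N_0}$ for some $\mu_{\mathcal P_0},\mu_{\mathcal N_0}\in\mathcal M_F^+(M)$, the measure $\mu$ is a finite signed measure, so there exists a Borel partition $M=P\sqcup N$ with $P$ a positive set and $N$ a negative set for $\mu$, i.e.\ $\mu(A\cap P)\ge 0$ and $\mu(A\cap N)\le 0$ for every Borel set $A\subset M$. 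One then sets $\mu_+:=\mu\rfloor P$ and $\mu_-:=-\,\mu\rfloor N$, that is $\mu_+(A):=\mu(A\cap P)$ and $\mu_-(A):=-\mu(A\cap N)$; these are finite positive (Radon) measures and clearly $\mu=\mu_+-\mu_-$.

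Next I would establish the extremality property \eqref{hahn-2}. Let $(\mu_{\mathcal P},\mu_{\mathcal N})\in\mathcal M_F^+(M)\times\mathcal M_F^+(M)$ be any couple subject to \eqref{hahn-3}. For every Borel set $A\subset M$,
\[
\mu_+(A)=\mu(A\cap P)=\mu_{\mathcal P}(A\cap P)-\mu_{\mathcal N}(A\cap P)\le \mu_{\mathcal P}(A\cap P)\le \mu_{\mathcal P}(A)\,,
\]
and symmetrically $\mu_-(A)=-\mu(A\cap N)=\mu_{\mathcal N}(A\cap N)-\mu_{\mathcal P}(A\cap N)\le\mu_{\mathcal N}(A)$, which is exactly \eqref{hahn-2}. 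Uniqueness of a couple enjoying this minimality property is then immediate: if $(\mu_+',\mu_-')\in\mathcal M_F^+(M)\times\mathcal M_F^+(M)$ is another couple with $\mu=\mu_+'-\mu_-'$ and satisfying \eqref{hahn-2} for all competitors, applying \eqref{hahn-2} with the roles of the two couples interchanged gives $\mu_+\le\mu_+'$ and $\mu_+'\le\mu_+$, hence $\mu_+=\mu_+'$, and likewise $\mu_-=\mu_-'$.

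Finally, for the variational characterization, fix again an arbitrary competitor $(\mu_{\mathcal P},\mu_{\mathcal N})$ subject to \eqref{hahn-3}. Evaluating \eqref{hahn-2} at $A=M$ yields $\mu_{\mathcal P}(M)+\mu_{\mathcal N}(M)\ge\mu_+(M)+\mu_-(M)$, so $(\mu_+,\mu_-)$ is a minimizer, and the minimum value is $|\mu|(M):=\mu_+(M)+\mu_-(M)$ with $|\mu|=\mu_++\mu_-$. To see the minimizer is unique, observe that $\mu_{\mathcal P}-\mu_+$ and $\mu_{\mathcal N}-\mu_-$ are, by \eqref{hahn-2}, finite positive measures, and they coincide — call this common measure $\sigma$ — because $\mu_{\mathcal P}-\mu_{\mathcal N}=\mu=\mu_+-\mu_-$. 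If $(\mu_{\mathcal P},\mu_{\mathcal N})$ attains the minimum then $2\,\sigma(M)=\mu_{\mathcal P}(M)+\mu_{\mathcal N}(M)-\mu_+(M)-\mu_-(M)=0$, whence $\sigma\equiv 0$ and $(\mu_{\mathcal P},\mu_{\mathcal N})=(\mu_+,\mu_-)$. No step uses anything beyond the Hahn decomposition and elementary manipulations of finite positive measures, so there is no genuine obstacle; the only point deserving a word of care is that the pieces $\mu\rfloor P$ and $\mu\rfloor N$ are genuine Radon measures, which follows from the topological regularity of $M$ (alternatively from domination, since $\mu_+\le\mu_{\mathcal P_0}$ and $\mu_-\le\mu_{\mathcal N_0}$).
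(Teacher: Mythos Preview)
Your proof is correct and follows essentially the same route as the paper: both derive the Jordan decomposition from the Hahn decomposition theorem (the paper simply cites \cite[Theorem 10.8]{WZ} for this), and both deduce the unique-minimizer statement from the extremality property \eqref{hahn-2}. Your argument for uniqueness of the minimizer via the common excess measure $\sigma=\mu_{\mathcal P}-\mu_+=\mu_{\mathcal N}-\mu_-$ is a clean rephrasing of the paper's argument, which instead observes directly that any competitor $(\mu_{\mathcal P},\mu_{\mathcal N})\neq(\mu_+,\mu_-)$ must satisfy a strict inequality on some Borel set and hence have strictly larger total mass.
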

\begin{proof}
This is a classical result in measure theory, see for instance \cite[Theorem
10.8]{WZ}. We point out that the last statement is just a consequence of \eqref{hahn-2}. In fact, in view of the latter, given any decomposition $
(\mu_{\mathcal{P}} , \mu_{\mathcal{N}}) \neq (\mu_+,\mu_-) $ there necessarily exists a Borel set $ A \subset M $ such that either $
\mu_{\mathcal{P}}(A) > \mu_+(A) $ or $ \mu_{\mathcal{N}}(A) > \mu_-(A) $. In particular,
$$
\begin{aligned}
|\mu|(M)= & \mu_{+}(A)+\mu_+(M \setminus A) + \mu_{-}(A)+\mu_-(M \setminus A) \\
< & \mu_{\mathcal{P}}(A)+\mu_{\mathcal{P}}(M \setminus A) + \mu_{\mathcal{N}}(A)+\mu_{\mathcal{N}}(M \setminus A) = \mu_{\mathcal{P}}(M) +
\mu_{\mathcal{N}}(M) \, .
\end{aligned}
$$
\end{proof}

\begin{oss}\label{hahn-dens}
In the case where $ d\mu(x) = f(x)d\mathcal{V}(x) $ for some $ f \in L^1(M) $, one has $ d\mu_{+}(x)=f_{+}(x)d\mathcal{V}(x)$ and $ d\mu_{-}(x)=
f_{-}(x)d\mathcal{V}(x) $.
\end{oss}

We now show a standard uniqueness result involving potentials of finite Radon measures.
\begin{lem}\label{Lemma-2}
Let $ \mu,\nu \in \mathcal{M}_F $, and suppose that $ \mathcal G^{\mu}(x) = \mathcal G^{\nu}(x) $ for almost every $ x \in M $. Then $ \mu=\nu $. In
particular, if $ \mu $ (or $\nu$) is positive, $ \mathcal G^{\mu}(x) = \mathcal G^{\nu}(x) $ for \emph{every} $ x \in M $.
\end{lem}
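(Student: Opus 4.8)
The plan is to recover $\mu$ (and $\nu$) from its potential by testing against Laplacians of smooth compactly supported functions, exploiting the defining property of the Green function: for each fixed $y\in M$ one has $-\Delta_x G(\cdot,y)=\delta_y$ in $\mathcal D'(M)$. This is essentially the content of \eqref{e11}. Since that identity is stated there only for $\phi\ge0$, I would first record that it extends to every $\phi\in C^\infty_c(M)$: writing $\phi=(\phi+c\chi)-c\chi$ with $\chi\in C^\infty_c(M)$, $\chi\ge 0$, $\chi\equiv 1$ on $\operatorname{supp}\phi$, and $c>0$ large enough that $\phi+c\chi\ge0$, linearity gives $\int_M G(x,y)\,\Delta\phi(x)\,d\mathcal V(x)=-\phi(y)$ for all $\phi\in C^\infty_c(M)$ (equivalently, a distribution vanishing on all nonnegative test functions is zero).

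Next, for a fixed $\phi\in C^\infty_c(M)$ I would compute $\int_M\mathcal G^{\mu}(x)\,\Delta\phi(x)\,d\mathcal V(x)$ by Fubini, recalling that here $\mathcal G^{\mu}=\mathcal G^{\mu_+}-\mathcal G^{\mu_-}$ is an a.e.\ defined $L^1_{\rm loc}(M)$ function. The justification of Fubini is the one genuinely technical point: since potentials of finite Radon measures lie in $L^1_{\rm loc}(M)$ (as recalled just before the statement, via \eqref{e16} and Tonelli) and $\Delta\phi$ is bounded with support in a compact set $K$, Tonelli gives $\int_{K\times M}G(x,y)\,d\mathcal V(x)\,d|\mu|(y)=\int_K\mathcal G^{|\mu|}\,d\mathcal V<\infty$, so $(x,y)\mapsto G(x,y)\,\Delta\phi(x)$ is $d\mathcal V\otimes d|\mu|$-integrable on $K\times M$. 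Swapping integrals and using the extended identity then yields
\[
\int_M\mathcal G^{\mu}(x)\,\Delta\phi(x)\,d\mathcal V(x)=-\int_M\phi\,d\mu,
\]
and likewise with $\nu$ in place of $\mu$.

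Since $\mathcal G^{\mu}=\mathcal G^{\nu}$ almost everywhere and $\Delta\phi\in L^\infty(M)$ has compact support, the two left-hand sides coincide, so $\int_M\phi\,d\mu=\int_M\phi\,d\nu$ for every $\phi\in C^\infty_c(M)$. To conclude, I would observe that $\lambda:=\mu-\nu\in\mathcal M_F(M)$ annihilates $C^\infty_c(M)$, hence — by uniform density of $C^\infty_c(M)$ in $C_c(M)$ with supports in a fixed compact set — annihilates all of $C_c(M)$, so $\lambda=0$ by the Riesz--Markov representation theorem, i.e.\ $\mu=\nu$. The ``in particular'' is then immediate: if $\mu$ (or $\nu$) is positive, then $\mu=\nu$ is positive, so both $\mathcal G^{\mu}$ and $\mathcal G^{\nu}$ are given by the same everywhere-defined $[0,+\infty]$-valued integral $\int_M G(x,\cdot)\,d\mu$, and therefore agree at every point of $M$. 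The only mild obstacle is thus the bookkeeping behind the Fubini step and the passage from nonnegative to sign-changing test functions in \eqref{e11}; everything else is standard measure theory.
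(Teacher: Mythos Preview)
Your proof is correct and follows essentially the same approach as the paper: test against $\Delta\phi$ for $\phi\in C^\infty_c(M)$, use Fubini (justified via \eqref{e16}) and the identity \eqref{e11} to recover $\int_M\phi\,d\mu=\int_M\phi\,d\nu$, then conclude by density. You simply make explicit two points the paper leaves tacit---the extension of \eqref{e11} to sign-changing test functions and the integrability check behind Fubini---but the argument is the same.
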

\begin{proof}
Let $\phi\in C^\infty_c(M)$. In view of the assumptions, we have
\begin{equation}\label{e103}
\int_M \mathcal G^\mu(x)\Delta \phi(x) \, d\mathcal V(x)\,=\, \int_M \mathcal G^\nu(x) \Delta \phi(x) \, d\mathcal V(x)\,.
\end{equation}
By Fubini's theorem (recall \eqref{e16}), \eqref{e103} is equivalent to
\[
\int_M \int_M G(x,y) \Delta\phi(x) \, d\mathcal V(x)  d\mu(y)\,=\, \int_M \int_M G(x,y) \Delta\phi(x) \, d\mathcal V(x)  d\nu(y) \, ,
\]
that is
\begin{equation}\label{e104}
\int_M \phi(y) \, d\mu(y) \,=\, \int_M \phi(y) \, d\nu(y) \, .
\end{equation}
From \eqref{e104} the thesis follows thanks to density of $C^\infty_c(M)$ in $C_c(M)$.
\end{proof}

\smallskip

The following result, which is crucial for the sequel, is concerned with integrability properties of potentials of functions in $L^1(M)\cap
L^\infty(M)$.

\begin{lem}\label{Lemma0}
Let $N\ge3$ and $f \in L^1(M) \cap L^\infty(M) $. Then $\mathcal G^f \in C(M) \cap L^p(M) $ for all $ p \in (N/(N-2),\infty] $, $
\nabla{\mathcal G^f} \in [L^2(M)]^N $, and the identity
\begin{equation}\label{e-new-1}
\int_{M} \left|\nabla{\mathcal G^f}\right|^2 d\mathcal V = \int_{M} f \, \mathcal G^f \, d\mathcal V
\end{equation}
holds.
\end{lem}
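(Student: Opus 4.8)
The plan is to route everything through the heat semigroup, using \eqref{e35} in the form $\mathcal G^f(x)=\int_0^\infty T_tf(x)\,dt$, valid pointwise for every $x\in M$ (Tonelli's theorem when $f\ge0$, then split $f=f_+-f_-$, each $\mathcal G^{f_\pm}$ being finite). From the ultracontractivity estimate \eqref{e31} and the $L^p$–contractivity \eqref{e33}, splitting the time integral at $t=1$, one first obtains $|\mathcal G^f|\le\mathcal G^{|f|}\le\|f\|_\infty+C\|f\|_1$ pointwise on $M$ — here $N\ge3$ is exactly what makes $\int_1^\infty t^{-N/2}\,dt<\infty$. Applying instead the Minkowski integral inequality and the interpolation bound $\|T_tf\|_p\le\|T_tf\|_1^{1/p}\|T_tf\|_\infty^{1-1/p}$ gives $\|\mathcal G^f\|_p\le\|f\|_p+C\|f\|_1<\infty$ whenever $\tfrac{N}{2}\left(1-\tfrac{1}{p}\right)>1$, i.e.\ whenever $p>N/(N-2)$; together with the $L^\infty$ bound this proves $\mathcal G^f\in L^\infty(M)\cap L^p(M)$ for all $p\in(N/(N-2),\infty]$.

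Next I would check that $-\Delta\mathcal G^f=f$ in $\mathcal D'(M)$: this is Fubini's theorem — licit because $\mathcal G^{|f|}\in L^\infty(M)$ and \eqref{e16} make the relevant double integral over $\operatorname{supp}\phi\times M$ absolutely convergent — combined with the defining property \eqref{e11} of the Green function. Since $f\in L^\infty_{\mathrm{loc}}(M)$, interior elliptic regularity then yields $\mathcal G^f\in W^{2,q}_{\mathrm{loc}}(M)$ for every $q<\infty$; in particular $\nabla\mathcal G^f\in L^2_{\mathrm{loc}}(M)$ and $\mathcal G^f$ admits a continuous representative. That this representative is the pointwise function $\mathcal G^f$ follows from results already established: $\mathcal G^{f_+}$ and $\mathcal G^{f_-}$ are $\mathfrak M$-superharmonic (Lemma \ref{lemma6}), hence $\mathfrak M$-continuous (Lemma \ref{lemma7}), so $\mathcal G^{f_\pm}(x)=\lim_{r\to0}\mathfrak M_r[\mathcal G^{f_\pm}](x)$ for every $x$; as $\mathfrak M_r[\cdot](x)$ depends only on the almost-everywhere class and continuous functions are $\mathfrak M$-continuous (see the proof of Lemma \ref{lemma9}), this limit equals the value at $x$ of the continuous representative.

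The heart of the matter is \eqref{e-new-1} together with $\nabla\mathcal G^f\in L^2(M)$. The idea is to truncate in \emph{time}: set $v_T:=\int_0^T T_tf\,dt$, which for finite $T$ belongs to $L^2(M)$, and which, on differentiating under the integral (isolating the contribution near $t=0$ and using that $-\Delta$ is closed), satisfies $-\Delta v_T=f-T_Tf\in L^2(M)$. By the essential self-adjointness recalled in Section \ref{RG} we then have $v_T\in W^{1,2}(M)$ and $\int_M|\nabla v_T|^2\,d\mathcal V=\langle v_T,f\rangle-\langle v_T,T_Tf\rangle$. Writing $T_t=T_{t/2}T_{t/2}$ and using self-adjointness gives $\langle v_T,f\rangle=2\int_0^{T/2}\|T_sf\|_2^2\,ds$ and $\langle v_T,T_Tf\rangle=2\int_{T/2}^{T}\|T_sf\|_2^2\,ds$; the crucial integrability input is $\int_0^\infty\|T_sf\|_2^2\,ds<\infty$, which holds because $\|T_sf\|_2^2\le\|T_sf\|_1\,\|T_sf\|_\infty\le\|f\|_1\min\{\|f\|_\infty,\,Cs^{-N/2}\|f\|_1\}$ and $N\ge3$ (this is where \eqref{e12}, through \eqref{e31}, enters). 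Consequently $\{\nabla v_T\}$ is bounded in $L^2(M)$ and $\int_M|\nabla v_T|^2\to2\int_0^\infty\|T_sf\|_2^2\,ds$ as $T\to\infty$. Since $|T_tf|\le T_t|f|$ gives $v_T\to\mathcal G^f$ pointwise (monotone/dominated convergence) and hence in $L^1_{\mathrm{loc}}(M)$, every weak-$L^2$ limit point of $\nabla v_T$ must be $\nabla\mathcal G^f$ (test against $C^\infty_c(M)$, using $\nabla\mathcal G^f\in L^2_{\mathrm{loc}}(M)$), so $\nabla v_T\rightharpoonup\nabla\mathcal G^f$ weakly in $L^2(M)$; in particular $\nabla\mathcal G^f\in L^2(M)$. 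Finally I would test $-\Delta\mathcal G^f=f$ against $v_T$ — legitimate since $\nabla\mathcal G^f\in L^2(M)$, $v_T\in W^{1,2}(M)$, $f\in L^2(M)$, and $C^\infty_c(M)$ is dense in $W^{1,2}(M)$ on the complete manifold $M$ — obtaining $\int_M\nabla\mathcal G^f\cdot\nabla v_T\,d\mathcal V=\int_M f\,v_T\,d\mathcal V$; letting $T\to\infty$, the right side converges to $\int_M f\,\mathcal G^f\,d\mathcal V$ by dominated convergence ($|f\,v_T|\le|f|\,\mathcal G^{|f|}\in L^1(M)$) and the left side to $\int_M|\nabla\mathcal G^f|^2\,d\mathcal V$ by the weak convergence, which is \eqref{e-new-1}.

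The main obstacle, and the reason for proceeding this way, is that a Cartan–Hadamard manifold may have exponential volume growth (as does $\mathbb H^N$), so $\mathcal G^f$ need not lie in $L^2(M)$ — already on $\mathbb R^3$ it typically does not — and one can neither apply the Dirichlet form directly to $\mathcal G^f$ nor run the usual spatial cut-off argument, since a cut-off supported on $B_{2R}$ generates error terms weighted by $\mathcal V(B_{2R})$ that cannot be absorbed. Truncating in time keeps the approximants $v_T$ in $L^2(M)$ and reduces the only quantitative requirement to the finiteness of $\int_0^\infty\|T_sf\|_2^2\,ds$, which the on-diagonal heat kernel bound delivers precisely when $N\ge3$.
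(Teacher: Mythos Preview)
Your proof is correct and takes a genuinely different route from the paper's.

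For the $L^p$ bounds you follow essentially the paper's argument (heat-semigroup representation, split the time integral at $t=1$). For continuity, however, the paper gives a direct estimate, splitting the Green-function integral into contributions from $B_\varepsilon(x_0)$ and its complement and using \eqref{e16}, whereas you deduce continuity from interior elliptic regularity and then identify the continuous representative with the pointwise potential via the $\mathfrak M$-continuity of $\mathcal G^{f_\pm}$ (Lemmas~\ref{lemma6} and~\ref{lemma7}). Both are fine; yours reuses machinery already in place, while the paper's is self-contained.

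The main divergence is in establishing \eqref{e-new-1}. The paper approximates \emph{spatially}: it introduces the Dirichlet potential $\mathcal G_R^f$ on $B_R$, uses the exact identity $\int_{B_R}|\nabla\mathcal G_R^f|^2=\int_{B_R}f\,\mathcal G_R^f$, and passes to the limit via the monotone convergence $\mathcal G_R^f\uparrow\mathcal G^f$ together with a second identity $\int_{B_R}\langle\nabla\mathcal G_R^f,\nabla\mathcal G^f\rangle=\int_{B_R}f\,\mathcal G_R^f$ (available because $\mathcal G_R^f=0$ on $\partial B_R$). You approximate \emph{temporally} by $v_T=\int_0^T T_tf\,dt\in L^2(M)$, compute $\|\nabla v_T\|_2^2$ through the semigroup identities $\langle v_T,f\rangle=2\int_0^{T/2}\|T_sf\|_2^2\,ds$ and $\langle v_T,T_Tf\rangle=2\int_{T/2}^{T}\|T_sf\|_2^2\,ds$, and pass to the limit using $\int_0^\infty\|T_sf\|_2^2\,ds<\infty$. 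Your closing remark is slightly off target: the obstruction you describe concerns a naive \emph{cut-off} argument, not the paper's Dirichlet-ball approximation, which sidesteps volume growth because $\mathcal G_R^f\equiv0$ outside $B_R$ and the limiting integrand $f\,\mathcal G^f$ lies in $L^1(M)$. What your time-truncation genuinely buys is that it stays entirely within the $L^2$ functional calculus of the heat semigroup, never introducing an auxiliary boundary-value problem, and incidentally produces the alternative formula $\int_M|\nabla\mathcal G^f|^2\,d\mathcal V=2\int_0^\infty\|T_sf\|_2^2\,ds$.
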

\begin{proof}
We first show that $\mathcal G^f\in C(M)$. To this aim, fix any $x_0\in M$, $\varepsilon>0$, and suppose that $x\in B_\varepsilon(x_0)$. We have:
\[
\begin{aligned}
\left| \mathcal{G}^f(x) - \mathcal{G}^f(x_0) \right| \le & \int_M \left| G(x,y)- G(x_0, y)\right||f(y)| \, d\mathcal V(y) \\
\leq & \| f \|_\infty \int_{B_\varepsilon(x_0)} [G(x, y)+ G(x_0, y)] \, d\mathcal V(y) \\
& + \int_{B^c_\varepsilon(x_0)}\left|G(x, y)-G(x_0, y) \right| |f(y)|\,d\mathcal V(y)\,.
\end{aligned}
\]
Due to \eqref{e16}, since $f\in L^1(M)$, by dominated convergence we get
\[\int_{B^c_\varepsilon(x_0)}\left|G(x, y)-G(x_0, y)
\right||f(y)|\,d\mathcal V(y)\to 0\quad \textrm{as}\ x\to x_0\,.\] On the other hand, since $y\mapsto G(x,y)$ is bounded e.g.~in
$L^{\frac{N-1}{N-2}}(B_\varepsilon(x_0))$ uniformly with respect to $x\in M$ (recall again \eqref{e16}  and the fact that the Riemannian measure $\mathcal V$ is locally Euclidean), and $G(x,y)\to G(x_0, y)$ as $x\to x_0$ for every $y\in M$, we have
that $G(x,y)$ converges weakly to $G(x_0, y)$ in $L^{\frac{N-1}{N-2}}(B_\varepsilon(x_0))$, so that
\[\int_{B_\varepsilon(x_0)}G(x, y) \, d\mathcal V(y)\to \int_{B_\varepsilon(x_0)} G(x_0, y) \, d\mathcal V(y) \quad \textrm{as}\ x\to x_0 \,. \]
Hence,
\[\limsup_{x\to x_0} \left| \mathcal{G}^f(x) - \mathcal{G}^f(x_0) \right|
\leq 2 \, \|f\|_\infty\int_{B_\varepsilon(x_0)}G(x_0, y) \, d\mathcal V(y) \, ,
\]
and the claim follows by letting $\varepsilon \to 0$, thanks to the local integrability of $y \mapsto G(x_0,y)$.

\smallskip
In order to prove that $\mathcal G^f \in L^p(M) $ for all $ p \in (N/(N-2),\infty] $, it is convenient to use the
representation formula \eqref{e35} for the Green function. In fact, by means of \eqref{e33}, \eqref{e31} and interpolation, it is
straightforward to infer the following estimate:
%\begin{equation}\label{e-new-2}
%\| T_t f \|_\infty \le C \, t^{-\frac{N}{2q}} \, \| f \|_{q} \quad \forall q \in [1,\infty) \, , \ \forall t>0 \, ,
%\end{equation}
\begin{equation}\label{e-new-3}
\| T_t f \|_p \le C \, t^{-\frac{N(p-1)}{2p}} \, \| f \|_1 \quad \forall p \in (1,\infty) \, , \ \forall t>0 \, ,
\end{equation}
where $C$ is a suitable positive constant depending only on $N $, $p$. As a consequence of \eqref{e35}, we have:
\begin{equation}\label{e-new-4}
\| \mathcal G^f \|_p \leq \int_0^\infty \|T_t f \|_p \, dt = \int_0^1 \|T_t f \|_p \, dt + \int_1^\infty \|T_t f \|_p \, dt \quad \forall p \in [1,\infty] \, .
\end{equation}
By using \eqref{e33} and the fact that $ f \in L^1(M) \cap L^\infty(M) $, it is apparent that the first integral in the r.h.s.~of \eqref{e-new-4} is
finite for every $ p \in [1,\infty] $. By means of \eqref{e31} we can deduce that the second integral in the r.h.s.~of
\eqref{e-new-4} is finite for $p=\infty$; furthermore, thanks to \eqref{e-new-3}, we find that such integral is also finite for all $ p \in (
N/(N-2),\infty ) $. We have therefore shown that $ \mathcal G^f \in L^p(M) $ for all $ p \in ( N/(N-2),\infty ] $.

We are left with the proof of \eqref{e-new-1}. We assume, with no loss of generality, that $ f \ge 0 $. For any $ R>0 $, we denote by $ \mathcal
G_R^f $ the potential of $ f $ in $B_R$, namely the unique $ H^1_0(B_R) $ solution to
\begin{equation*}\label{e-neq-5}
\begin{cases}
-\Delta v = f & \textrm{in } B_R \, , \\
v=0 & \textrm{on } \partial B_R \, .
\end{cases}
\end{equation*}
Clearly,
\begin{equation}\label{e-new-1-a}
\int_{B_R} \left|\nabla{\mathcal G_R^f}\right|^2 d\mathcal V = \int_{B_R} f \, \mathcal G_R^f \, d\mathcal V \, .
\end{equation}
Because $ \mathcal G_R^f $ converges monotonically from below to $ \mathcal G^f $, $ f \in L^1(M) \cap L^\infty(M) $ and from the first part of the
proof we know that $\mathcal G^f \in L^p(M)$ for all $ p \in (N/(N-2),\infty] $, we can pass to the limit in \eqref{e-new-1-a} as $ R\to+\infty $ to
get
\begin{equation*}\label{e-new-1-b}
\int_{M} \left|\nabla{\mathcal G^f}\right|^2 d\mathcal V \le \int_{M} f \, \mathcal G^f \, d\mathcal V
\end{equation*}
and
\begin{equation}\label{e-new-1-c}
\nabla \mathcal G_R^f \rightharpoonup \nabla \mathcal G^f  \quad \textrm{in } [L^2(M)]^N \, ,
\end{equation}
where $ \nabla \mathcal G_R^f $ is set to zero in $ B_R^c $. Exploiting the fact that $ \mathcal G_R^f = 0 $ on $ \partial B_R $ and that $ -\Delta
\mathcal G^f = f $ in $M$, we obtain:
\begin{equation}\label{e-new-1-d}
\int_{B_R} \langle \nabla{\mathcal G_R^f} \, , \, \nabla{\mathcal G^f} \rangle \, d\mathcal V = \int_{B_R} f \, \mathcal G_R^f \, d\mathcal V \, .
\end{equation}
Identity \eqref{e-new-1} then follows by letting $ R \to \infty $ in \eqref{e-new-1-d}, using \eqref{e-new-1-c} and the monotone convergence of $
\mathcal G_R^f $ to $\mathcal G^f$. The case of signed functions follows by writing $f=f^+ - f^-$, and using the linearity of the potential operator.
\end{proof}

\section{Existence of weak solutions: proofs}\label{sec: exi-proof}

This section is devoted to the proofs of our main results concerning existence and fundamental properties of the weak solutions to \eqref{e64} we construct.

\subsection{Consequences of the definition of weak solution}\label{sect-cons}

The aim of this subsection is to prove the following result, which establishes some fundamental properties enjoyed by weak solutions, in the sense of Definition \ref{defsol1}, as such.
\begin{pro}\label{pro: cons}
Let assumption \eqref{H}-\textrm{(i)} be satisfied. Let $u$ be any function satisfying \eqref{e65}--\eqref{e67}. Then:
\begin{equation}\label{eq: cont-L1}
u \in C((0,\infty);L^1(M)) \, ,
\end{equation}
\begin{equation}\label{eq: cont-mass}
\int_M u(x,t_1) \, d\mathcal V(x) = \int_M u(x,t_2) \, d\mathcal V(x) \quad \textrm{for all}\;\; t_2 > t_1 >0 \, ,
\end{equation}
\begin{equation}\label{e70-pre}
\|u(t)\|_\infty \leq K \, t^{-\alpha} \left\| u \right\|_{L^\infty((0,\infty);L^1(M))}^{\beta} \quad \textrm{for all}\;\; t>0 \, ,
\end{equation}
where $K$ is a positive constant which only depends on $m,N$ and $ \alpha,\beta $ are as in \eqref{e71-pre}.
\end{pro}
In order to prove Proposition \ref{pro: cons} we need a preliminary lemma, which relies on results on the porous medium equation that are by now well known.
\begin{lem}\label{lem: pre}
Let $ \mu \equiv u_0 \in L^1(M) \cap L^\infty(M) $. Then there exists a unique weak solution $ u $ to problem \eqref{e64} satisfying \eqref{e65}--\eqref{e66} down to $ \tau=0 $ and
\begin{equation}\label{e67-energy}
-\int_0^\infty\int_M u(x,t)\varphi_t(x,t) \, d\mathcal V(x) dt +
\int_0^\infty\int_M \langle \nabla (u^m)(x,t), \nabla
\varphi(x,t)\rangle \, d\mathcal V(x) dt = \int_M u_0(x) \varphi(x,0) \, d\mathcal V(x)
\end{equation}
for any $\varphi\in C^\infty_c(M\times [0,\infty))$. Moreover, $ u \in C([0,\infty);L^1(M)) $, and if $ v $ is another weak solution to problem \eqref{e64} with initial datum $ v_0 \in L^1(M) \cap L^\infty(M) $ there holds
\begin{equation}\label{eq: L1-contract}
\left\| u(t)-v(t) \right\|_1 \le \left\| u_0-v_0 \right\|_1 \quad \textrm{for all}\;\; t > 0 \, .
\end{equation}
\end{lem}
\begin{proof}
As $ u_0 \in L^1(M) \cap L^\infty(M) $, existence of the so-called energy solutions, namely solutions for which \eqref{e65}--\eqref{e66} hold down to $ \tau=0 $ and \eqref{e67-energy} is satisfied, is rather standard (we refer e.g.~to \cite[Sections 5, 9]{Vaz07} for the Euclidean case). The simplest way to construct them is e.g.~by using approximate problems on balls, establishing suitable a priori estimates and then passing to the limit as the radius of the ball goes to infinity. A sketch of an analogous procedure is provided in the beginning of the proof of Theorem \ref{thmexi} below. Uniqueness in this class is due to a well-known theorem by Ole\u{\i}nik, see \cite[Section 5.3]{Vaz07}. The continuity of $ u(t) $ as a curve in $ L^1(M) $ is then a consequence of an alternative construction of the solution, which makes use of the Crandall-Liggett Theorem and proceeds by means of time discretization (see \cite[Section 10]{Vaz07}). Also the $ L^1 $-contractivity inequality \eqref{eq: L1-contract} is a classical fact (see \cite[Section 3]{Vaz07}).

For similar issues involving existence, uniqueness and equivalence of different concepts of solution (in the framework of the fractional porous medium equation), we also refer to \cite[Appendix A]{GMPas}.
\end{proof}

\begin{proof}[Proof of Proposition \ref{pro: cons}]
Given almost every $t_0>0$, namely any Lebesgue point of $ u(t) $ as a curve in $L^1(M)$, let $\big\{\theta_\varrho^{t_0}\big\}\,\,
(0<\varrho<t_0)$ be a family of positive, smooth
approximations of $\chi_{[t_0, \infty)}$ such that
$\operatorname{supp}\, \theta_\varrho^{t_0} \subset [t_0-\varrho,\infty)$ and $\big(\theta_\varrho^{t_0}\big)^\prime \to
\delta_{t_0}$ as $\varrho\to 0$. Let $ \varphi$ be any function in $ C^\infty_c(M \times [t_0,\infty) ) $: we can assume that $ \varphi $ is the restriction to $ M \times [t_0,\infty) $ of some function in $ C^\infty_c(M \times (0,\infty) ) $. Hence, by plugging in \eqref{e67} the test function
\[
\varphi_\varrho(x,t):=\theta_\varrho^{t_0}(t) \varphi(x,t) \quad \forall (x,t) \in M \times (0,\infty)
\]
and letting $ \varrho \to 0 $, we end up with the identity
\begin{equation*}\label{e67-varrho}
\begin{aligned}
& -\int_{t_0}^\infty\int_M u(x,t) \varphi_t(x,t) \, d\mathcal V(x) dt +
\int_{t_0}^\infty\int_M \langle \nabla (u^m)(x,t), \nabla
\varphi(x,t)\rangle \, d\mathcal V(x) dt \\
= & \int_M u(x,t_0) \varphi(x,t_0) \, d\mathcal V(x) \, .
\end{aligned}
\end{equation*}
On the other hand, by Definition \ref{defsol1} it is apparent that \eqref{e65}--\eqref{e66} hold for $ \tau=t_0 $: we have therefore shown that $ u \rfloor_{[t_0,\infty)} $ is a weak solution to \eqref{e64} (with $ 0 $ replaced by $t_0$) in the sense of Lemma \ref{lem: pre}, starting from the initial datum $ \mu \equiv u(t_0) \in L^1(M) \cap L^\infty(M) $. In particular $ u \in C([t_0,\infty); L^1(M)) $, whence \eqref{eq: cont-L1} because $ t_0 $ can be arbitrarily small.

In order to establish \eqref{eq: cont-mass}, we exploit a reasoning similar to the one outlined in Remark \ref{rem: barenblatt}. Indeed, by the same arguments, we know that the free-mass time-shifted Barenblatt functions
\begin{equation}\label{shif-baren}
B^{E,D}_{0}(\rho(x), t) := \left(t+1\right)^{-\alpha} \left[ D - k \, \rho(x)^2 \left( t+1 \right)^{-\beta} \right]_+^{\frac{1}{m-1}} \quad  \textrm{for all}\;\; (x,t) \in M \times (0,\infty) \, , \ \, \forall D > 0
\end{equation}
are (weak) supersolutions to \eqref{e64} with initial datum $ \mu \equiv B^{E,D}_{0}(\rho(x), 0) $, where $ \alpha,\beta $ are as in \eqref{e71-pre} and $ k $ is a positive constant depending only on $m,N$. Let us first prove \eqref{eq: cont-mass} under the additional assumption that $ u(t_1) $ is compactly supported. In this case, we can always choose $ D $ in \eqref{shif-baren} so large that $ |u(x,t_1)| \le B^{E,D}_{0}(\rho(x), 0) $. Hence, because $ B^{E,D}_{0}(\rho(x), t) $ and $ -B^{E,D}_{0}(\rho(x), t) $ are a supersolution and a subsolution, respectively, it follows that
\begin{equation}\label{shif-baren-sdwc}
-B^{E,D}_{0}(\rho(x), t-t_1) \le u(x,t) \le B^{E,D}_{0}(\rho(x), t-t_1)  \quad \textrm{for a.e. } (x,t) \in M \times (t_1,\infty) \, .
\end{equation}
Since $ B^{E,D}_{0}$ is compactly supported for all times, estimate \eqref{shif-baren-sdwc} implies that $ u $ is also compactly supported for all times. In particular, \eqref{eq: cont-mass} holds. In the case where $ u(t_1) $ is not compactly supported, we can pick a sequence of initial data $ u_{1,n} \in L^1(M) \cap L^\infty(M) $, with compact support, such that $ \lim_{n \to \infty} u_{1,n} = u(t_1) $ in $ L^1(M) $. If we denote by $ u_n $ the solutions to \eqref{e64} corresponding to $ \mu \equiv u_{1,n} $, thanks to the above argument we can deduce that \eqref{eq: cont-mass} is satisfied with $ u $ replaced by $u_n(t-t_1)$: on the other hand, the $L^1$-contractivity inequality \eqref{eq: L1-contract} ensures that the solution map is continuous in $ L^1(M) $, so that we can pass to the limit as $ n \to \infty $ to get \eqref{eq: cont-mass}.

Let us finally deal with the smoothing effect \eqref{e70-pre}. For initial data $ u_0 $ and corresponding solutions $u$ as in Lemma \ref{lem: pre}, the estimate
\begin{equation}\label{e70-proof}
\|u(t)\|_\infty \leq K \, t^{-\alpha} \left\| u_0 \right\|_{L^1(M)}^{\beta} \quad \textrm{for all}\;\; t>0
\end{equation}
holds as a consequence of the Sobolev inequality \eqref{e12} (see e.g.~\cite[Theorem 4.1]{BGV-07} or \cite[Corollary 5.6]{GM13}). Hence, by applying \eqref{e70-proof} to $ u \rfloor_{[t_1,\infty)} $ we obtain
\begin{equation*}\label{e70-proof-bis}
\|u(t)\|_\infty \leq K \, t^{-\alpha} \left\| u(t_1) \right\|_{L^1(M)}^{\beta} \le K \, t^{-\alpha} \left\| u \right\|_{L^\infty((0,\infty);L^1(M))}^{\beta} \quad \textrm{for all}\;\; t>t_1 \, ;
\end{equation*}
since $ t_1>0 $ is arbitrary, the thesis follows.
\end{proof}

\subsection{Proof of the existence result}\label{proof-ex}
Let us outline the main ideas behind the proof of Theorem \ref{thmexi}. Suppose first that $\mu$ is a compactly supported measure. Take $\mu_\varepsilon \in L^1(M)\cap L^\infty(M)$ such that
\begin{equation}\label{e118b}
\int_M \phi \mu_\varepsilon \, d\mathcal V \to \int_{M} \phi \, d\mu  \quad
\textrm{as}\ \varepsilon\to 0 \, , \ \textrm{for any}\ \phi\in C_b(M)
\end{equation}
and
\begin{equation}\label{e118c}
\int_M \left| \mu_\varepsilon \right| d\mathcal{V} \to |\mu|(M) \quad \textrm{as}\ \varepsilon \to 0 \, ;
\end{equation}
% forse e118c è conseguenza diretta della e118b ? prendo come \phi la funzione che vale 1 sul supporto di \mu_+ e -1 sul supporto di \mu_-, posso farlo grazie al risultato di AFP ?
to this aim it suffices, for instance, to mollify the image of $\mu$ on $ \mathbb{R}^N $ and then come back to $M$ through one of the regular bijections between $M$ and $\mathbb{R}^N$. For any fixed $\varepsilon>0$ and $R>0$, consider then the following homogeneous Dirichlet problem:
\begin{equation}\label{e90}
\begin{cases}
u_t \,=\, \Delta(u^m) & \textrm{in}\ B_R\times (0,\infty) \, , \\
u\,=\,0 & \textrm{on}\ \partial B_R\times (0,\infty) \, , \\
u \,=\, \mu_\varepsilon \rfloor_{B_R} & \textrm{on}\ B_R\times \{0\} \, ,
\end{cases}
\end{equation}
for which one can provide the same definition of weak solution as in Lemma \ref{lem: pre} upon replacing $M$ with $B_R$ and requiring in addition that $ {u}^m \in H^1_0(B_R) $. Existence, uniqueness and good properties of the weak (energy) solution to \eqref{e90}, which will be denoted by $u_{\varepsilon, R}$, can be shown by means of well-established methods (see again the proof of Lemma \ref{lem: pre}). Classical compactness arguments ensure that $\{u_{\varepsilon,R}\}$ converges (up to subsequences), as $ R \to \infty $, to a
function $u_\varepsilon$ satisfying \eqref{e65}-\eqref{e67}. A further passage to the limit, as $ \varepsilon \to 0 $, yields a function $u$ which still complies with \eqref{e65}-\eqref{e67}. The hardest point consists in proving that $u$ also fulfils \eqref{e68}, namely that its initial trace is precisely $\mu$. To this end we have to adapt to our framework some potential techniques first introduced by M. Pierre in \cite{Pierre} and then recently developed in \cite{Vaz14,GMPmu} in the nonlocal Euclidean context. Finally, we handle general finite measures (i.e.~not necessarily compactly supported) by an additional approximation.

%for which one can provide the same definition of weak solution as above upon replacing $M$ with $B_R$ (recall Definition \ref{defsol1}). In addition, since $ \mu_\varepsilon \in L^1(M) \cap L^\infty(M) $, we can assume that the solution is globally bounded and strong in agreement with Definition \ref{defsol2}. In the class of strong or energy solutions, existence and uniqueness holds: we shall therefore denote by $u_{\varepsilon, R}$ such a solution (see \cite[Sections 5, 8]{Vaz07}).
%Consider the problem
%\begin{equation}\label{e83}
%\begin{cases}
%u_t \,=\, \Delta(u^m) & \textrm{in}\ M\times (0,\infty) \,, \\
%u \,=\, u_0 & \textrm{on}\ M\times \{0\} \, ,
%\end{cases}
%\end{equation}
%where, for $u_0\in L^1(M)\cap L^\infty(M)$, we say that a function $u$ is a {\em weak solution} to problem \eqref{e83} if the conditions of Definition
%\ref{defsol1} hold and moreover $u\in C([0,\infty);L^1(M))$. Existence and uniqueness of solutions to such problem are standard \color{blue} and follow along the same arguments as in the Euclidean case (see \cite{Vaz07}). \normalcolor
%
%\smallskip
\begin{proof}[Proof of Theorem \ref{thmexi}]
By means of standard arguments one can infer that the weak (energy solution) $ u_{\varepsilon, R} $ to \eqref{e90} complies with the non expansivity of the $ L^1 $ norms
\begin{equation}\label{e95cc}
\left\| u_{\varepsilon, R}(t) \right\|_{L^1(B_R)} \leq \left\| \mu_\varepsilon \right\|_{L^1(B_R)} \quad \textrm{for all}\;\; t>0 \, ,
\end{equation}
the $ L^1$-$L^\infty $ smoothing effect
\begin{equation}\label{e95b}
\left\| u_{\varepsilon, R}(t) \right\|_{L^\infty(B_R)} \leq K \, t^{-\alpha} \left\| \mu_\varepsilon \right\|_{L^1(B_R)}^{\beta} \quad \textrm{for all}\;\; t>0
\end{equation}
%and
%\begin{equation}\label{e113}
%\int_M \left| u_\varepsilon(x,t) \right|^{m+1} d\mathcal V(x) \leq K^m \, t^{-\alpha m} \left\| \mu_\varepsilon \right\|_1^{1+\beta m} \quad \textrm{for all}\ t>0 \, .
%\end{equation}
and the energy estimates
\begin{equation}\label{e114}
\int_{t_1}^{t_2} \int_{B_R} \left|\nabla\!\left(u_{\varepsilon, R}^m \right) \! (x,t)\right|^2 d\mathcal
V(x) dt + \int_{B_R} \left| u_{\varepsilon,R}(x,t_2) \right|^{m+1} d\mathcal V(x) \leq K^m \, t_1^{-\alpha m} \left\| \mu_\varepsilon \right\|_{L^1(B_R)}^{1+\beta m} ,
\end{equation}
\begin{equation}\label{e115}
\int_{t_1}^{t_2} \int_{B_R} \left| (z_{\varepsilon,R})_t (x,t) \right|^2 d\mathcal V(x) dt \leq \widetilde{C} \, t_1^{-\alpha m} \left\| \mu_\varepsilon \right\|_{L^1(B_R)}^{1+\beta m}
\end{equation}
for all $t_2>t_1>0$, where $z_{\varepsilon,R}:=u_{\varepsilon,R}^{(m+1)/2}$ and $\widetilde{C}$ is a positive constant that depends on $N,m,t_1,t_2$ but is independent of $\varepsilon,R$. In the Euclidean context estimates \eqref{e95cc}, \eqref{e114}, \eqref{e115} are by now classical: see again \cite{Vaz07}, in particular Section 5 there. The fact that here $ B_R $ is a ball on a Riemannian manifold is inessential. The smoothing effect \eqref{e95b} is then again a direct consequence of the Sobolev inequality \eqref{e12}. %, as already noted in the proof of Proposition \ref{pro: cons}.

Let $ \mathcal{G}_{\varepsilon,R}$ be the potential of $u_{\varepsilon,R}$, that is
\[
\mathcal{G}_{\varepsilon,R}(x,t):=\int_M G_R(x,y) u_{\varepsilon,R}(y,t) \, d\mathcal V(y)\quad \textrm{for all } x\in M \, , \ t>0 \, ,
\]
where $G_R$ is the Green function of the Dirichlet Laplacian in $B_R$. We claim that $\mathcal{G}_{\varepsilon,R}$ solves
\begin{equation*}\label{e116}
\left( \mathcal{G}_{\varepsilon,R} \right)_t = -u_{\varepsilon,R}^m \quad \textrm{in}\ B_R \times (0,\infty) \, ,
\end{equation*}
in the sense that
\begin{equation}\label{e117}
\int_{B_R} \mathcal{G}_{\varepsilon,R}(x, t_2)\phi(x) \, d\mathcal V(x) - \int_{B_R}
\mathcal{G}_{\varepsilon,R}(x,t_1)\phi(x) \, d\mathcal V(x) =
-\int_{t_1}^{t_2} \int_{B_R} u^m_{\varepsilon,R}(x,t)\phi(x) \, d\mathcal V(x) dt
\end{equation}
for all $ t_2>t_1>0 $, for any $\phi\in C^\infty_c(B_R)$. Indeed, by standard elliptic regularity, we have that
\begin{equation}\label{eq: pot-test}
\mathcal G_R^\phi(x) := \int_{B_R} G_R(x,y) \phi(y) \, d\mathcal V(y) \in C^\infty_0(B_R) \, .
\end{equation}
Hence, we are allowed to pick the test function $ \varphi(x,t) = \mathcal G_R^\phi(x) \, [\theta_\varrho^{t_1}(t) - \theta_\varrho^{t_2}(t)] $ in the weak formulation of \eqref{e90}, with $ \theta_\varrho^{t_\cdot} $ defined as in the proof of Proposition \ref{pro: cons}. By using the fact that $ (-\Delta)\mathcal G_R^\phi = \phi $ in $ B_R $, integrating by parts and letting $ \varrho \to 0 $, we get the identity
\begin{equation}\label{e120-BR}
\int_{B_R} u_{\varepsilon,R}(x, t_2) \mathcal G_R^\phi(x) \, d\mathcal V(x) - \int_{B_R}
u_{\varepsilon,R}(x, t_1) \mathcal G_R^\phi(x) \, d\mathcal V(x) = -\int_{t_1}^{t_2} \int_{B_R} u_{\varepsilon,R}^m(x,t) \phi(x) \, d\mathcal V(x) dt \, ,
\end{equation}
namely \eqref{e117} up to an application of Fubini's Theorem in the left-hand side. By letting $ t_1 \to 0 $ in \eqref{e120-BR}, we obtain
\begin{equation}\label{e120-BR-0}
\int_{B_R} u_{\varepsilon,R}(x, t_2) \mathcal G_R^\phi(x) \, d\mathcal V(x) - \int_{B_R}
\mathcal G_R^\phi(x) \, \mu_\varepsilon(x) d\mathcal V(x) = -\int_{0}^{t_2} \int_{B_R} u_{\varepsilon,R}^m(x,t) \phi(x) \, d\mathcal V(x) dt \, .
\end{equation}
We now let $ R \to \infty $. Thanks to \eqref{e95cc}--\eqref{e115}, routine compactness and lower-semicontinuity arguments ensure that $ \{ u_{\varepsilon,R} \} $, set to be zero outside $B_R$, converges almost everywhere (up to subsequences) to some function $ u_\varepsilon $ which satisfies \eqref{e65}--\eqref{e67} (with $u$ replaced by $ u_\varepsilon $) and the analogues of \eqref{e95cc}--\eqref{e115}:
\begin{equation}\label{e95cc-limit}
\left\| u_{\varepsilon}(t) \right\|_{1} \leq \left\| \mu_\varepsilon \right\|_{1} \quad \textrm{for all}\;\; t>0 \, ,
\end{equation}
\begin{equation}\label{e95b-limit}
\left\| u_{\varepsilon}(t) \right\|_{\infty} \leq K \, t^{-\alpha} \left\| \mu_\varepsilon \right\|_{1}^{\beta} \quad \textrm{for all}\;\; t>0 \, ,
\end{equation}
%and
%\begin{equation}\label{e113}
%\int_M \left| u_\varepsilon(x,t) \right|^{m+1} d\mathcal V(x) \leq K^m \, t^{-\alpha m} \left\| \mu_\varepsilon \right\|_1^{1+\beta m} \quad \textrm{for all}\ t>0 \, .
%\end{equation}
\begin{equation}\label{e114-limit}
\int_{t_1}^{t_2} \int_{M} \left|\nabla\!\left(u_{\varepsilon}^m \right) \! (x,t)\right|^2 d\mathcal
V(x) dt + \int_{M} \left| u_{\varepsilon}(x,t_2) \right|^{m+1} d\mathcal V(x) \leq K^m \, t_1^{-\alpha m} \left\| \mu_\varepsilon \right\|_{1}^{1+\beta m} ,
\end{equation}
\begin{equation}\label{e115-limit}
\int_{t_1}^{t_2} \int_{M} \left| (z_{\varepsilon})_t (x,t) \right|^2 d\mathcal V(x) dt \leq \widetilde{C} \, t_1^{-\alpha m} \left\| \mu_\varepsilon \right\|_{1}^{1+\beta m}
\end{equation}
for all $t_2>t_1>0$, where $z_{\varepsilon}:=u_{\varepsilon}^{(m+1)/2}$. Moreover, since
\begin{equation*}\label{eq: bound-pot-phi}
\lim_{R \to \infty} \mathcal{G}_R^\phi(x) = \mathcal{G}^\phi(x) \quad \forall x \in M \, , \quad  \mathcal{G}^\phi \in C_0(M) \, , \quad \left| \mathcal{G}_R^\phi \right| \le \mathcal{G}^{|\phi|}  \in C_0(M)
\end{equation*}
(consequences of definition \eqref{e21-bis} plus \eqref{e16}, \eqref{e7}, \eqref{e57}), by exploiting estimates \eqref{e95cc}--\eqref{e95b} we can pass to the limit in \eqref{e120-BR-0} to get
\begin{equation*}\label{e120-M}
\int_{M} u_{\varepsilon}(x, t_2) \mathcal G^\phi(x) \, d\mathcal V(x) - \int_{M}
\mathcal G^\phi(x) \, \mu_\varepsilon(x) d\mathcal V(x) = -\int_{0}^{t_2} \int_{M} u_{\varepsilon}^m(x,t) \phi(x) \, d\mathcal V(x) dt \, .
\end{equation*}
As a final step, we let $ \varepsilon \to 0 $. In view of \eqref{e95cc-limit}--\eqref{e115-limit} and \eqref{e118b}--\eqref{e118c}, proceeding as above we deduce that $ \{ u_\varepsilon \} $ converges almost everywhere (up to subsequences) to some function $ u $ which satisfies \eqref{e65}--\eqref{e67},
\begin{equation}\label{e95cc-limit-last}
\left\| u(t) \right\|_{1} \leq \left| \mu \right|\!(M) \quad \textrm{for all}\;\; t>0 \, ,
\end{equation}
\begin{equation}\label{e95b-limit-last}
\left\| u(t) \right\|_{\infty} \leq K \, t^{-\alpha} \left| \mu \right|\!(M)^{\beta} \quad \textrm{for all}\;\; t>0
\end{equation}
and
\begin{equation*}\label{e120-M-last}
\int_{M} u(x, t_2) \mathcal G^\phi(x) \, d\mathcal V(x) - \int_{M}
\mathcal G^\phi(x) \, d \mu(x) = -\int_{0}^{t_2} \int_{M} u^m(x,t) \phi(x) \, d\mathcal V(x) dt \, ,
\end{equation*}
namely
\begin{equation}\label{e120-M-last-2}
\int_{M} \mathcal G(x,t_2) \phi(x)  \, d\mathcal V(x) - \int_{M}
\mathcal G^\mu(x) \phi(x) \, d\mathcal V(x) = -\int_{0}^{t_2} \int_{M} u^m(x,t) \phi(x) \, d\mathcal V(x) dt
\end{equation}
up to an application of Fubini's Theorem, where we denote by $ \mathcal{G}(t) $ the potential of $u(t)$. In particular, by combining \eqref{e95cc-limit-last}--\eqref{e95b-limit-last} and \eqref{e120-M-last-2} we deduce the estimate
\begin{equation}\label{e125}
\left| \int_M \mathcal{G}(x, t_2) \phi(x) \, d\mathcal V(x) - \int_M \mathcal{G}^{\mu}(x)
\phi(x) \, d \mathcal V(x) \right| \leq \|\phi \|_{\infty} \, K^{m-1} \, |\mu|(M)^{1+\beta(m-1)} \, \int_0^{t_2}t^{-\alpha(m-1)} \, dt \, .
\end{equation}
%so that by letting $t_2 \to 0$ in \eqref{e125} we get
%\begin{equation}\label{e126}
%\lim_{t\to 0} \mathcal{G}(t) = \mathcal{G}^\mu \quad \textrm{in}\ L^1_{\rm loc}(M) \, .
%\end{equation}
%Furthermore, as a consequence of Fatou's Lemma, \eqref{e118c} and \eqref{e95cc}, we infer that
%\begin{equation}\label{e127}
%\| u(t) \|_1 \leq |\mu|(M) \quad \textrm{for all}\ t>0\,.
%\end{equation}
By compactness results in measure spaces (recall Definition \ref{def1}), from \eqref{e95cc-limit-last} it follows that every sequence $t_n\to 0$ admits a subsequence $\{t_{n_k}\}$ such that
$\{u(t_{n_k})\}$ converges vaguely to a certain finite Radon measure $\nu$. On the other hand, as noted above, $\mathcal G^\phi(x) $ is a continuous function that vanishes as $ \operatorname{dist}(x,o) \to \infty$. We can therefore pass to the limit as $ t_2 \to 0 $ in \eqref{e125} (by using again Fubini's Theorem): because $ \phi $ is arbitrary, it follows that $\mathcal G^\mu=\mathcal G^\nu$ almost everywhere in $M$; so, thanks to Lemma \ref{Lemma-2}, we have that $\nu=\mu$ and the limit measure does not depend on the particular subsequence. We have thus proved that
\begin{equation}\label{e68-proof}
\lim_{t\to 0} \int_M u(x,t) \phi(x) \, d\mathcal V(x) \,=\, \int_{M} \phi(x) \, d\mu(x) \quad \textrm{for any}\ \phi \in C_0(M) \, .
\end{equation}
In particular, given the lower semicontinuity of the total variation w.r.t.~the vague topology, %(see again Definition \ref{def1}),
\begin{equation}\label{e128}
|\mu|(M) \leq \liminf_{t\to 0} \| u(t) \|_1 \, ,
\end{equation}
so that by gathering \eqref{e95cc-limit-last} and \eqref{e128} we obtain
\begin{equation}\label{e129}
\lim_{t\to 0} \| u(t) \|_1 = |\mu|(M) \, .
\end{equation}
We are then left with proving that \eqref{e68-proof} holds for any $ \phi \in C_b(M) $. To this aim, we exploit Lemma \ref{lem-hahn}.
%\begin{equation}\label{hahn}
%\mu = \mu_+ - \mu_{-} \, ,
%\end{equation}
%\begin{equation}\label{hahn-1}
%|\mu| = \mu_+ + \mu_{-}
%\end{equation}
%and
%\begin{equation}\label{hahn-2}
%\mu_{\mathcal{P}} \ge \mu_+ \, , \quad \mu_{\mathcal{N}} \ge \mu_{-}
%\end{equation}
%for any other positive finite measures $ \mu_{\mathcal{P}} $, $ \mu_{\mathcal{N}} $ such that $ \mu = \mu_{\mathcal{P}} - \mu_{\mathcal{N}} $. In view of \eqref{hahn-2} it is apparent that such decomposition is unique. Moreover, in the case where $ d\mu(x) = f(x)d\mathcal{V}(x) $ for some $ f \in L^1(M) $, we have $ d\mu(x)=f_{+}(x)d\mathcal{V}(x) - f_{-}(x)d\mathcal{V}(x) $. We claim that
%\begin{equation}\label{hahn-4}
%\lim_{t\to 0} \int_M u_{+}(x,t) \phi(x) \, d\mathcal V(x) = \int_{M}\phi(x) d\mu_{+}(x) \quad \textrm{and} \quad  \lim_{t\to 0} \int_M u_{-}(x,t) \phi(x) \, d\mathcal V(x) = \int_{M}\phi(x) d\mu_{-}(x)
%\end{equation}
%for any $\phi\in C_b(M)$.
In fact, by \eqref{e95cc-limit-last} and \cite[Theorem 1.59]{AFP}, given any sequence $t_n\to 0$ there exists a subsequence $\{t_{n_k}\}$ such that
$\{u_+(t_{n_k})\}$ and $\{u_-(t_{n_k})\}$ converge vaguely to some positive finite Radon measures $\mu_{\mathcal P}$ and $ \mu_{\mathcal N} $,
respectively. Thanks to \eqref{e68-proof} it follows that $\mu = \mu_{\mathcal P} - \mu_{\mathcal N}$. Moreover, as a consequence of \eqref{e129}
and of the lower semicontinuity of the total variation w.r.t.~the vague topology, we have:
\begin{equation}\label{hahn-6-p}
\mu_{\mathcal P}(M) + \mu_{\mathcal N}(M) \le \liminf_{k\to\infty} \left\| u_+(t_{n_k}) \right\|_1 + \liminf_{k\to\infty} \left\| u_-(t_{n_k}) \right\|_1  \le \liminf_{k\to\infty} \left\| u(t_{n_k}) \right\|_1 = |\mu|(M) \, .
\end{equation}
By Lemma \ref{lem-hahn}, \eqref{hahn-6-p} implies $ \mu_{\mathcal P}=\mu_+ $ and $ \mu_{\mathcal N} = \mu_- $, so that
\begin{equation}\label{hahn-7-p}
\lim_{k\to\infty} \left\| u_+(t_{n_k}) \right\|_1 = \mu_{+}(M)  \, , \quad \lim_{k\to\infty} \left\| u_-(t_{n_k}) \right\|_1 = \mu_{-}(M) \, .
\end{equation}
Due to \eqref{hahn-7-p} and \cite[Proposition 1.80]{AFP} we can then infer that
\begin{equation*}\label{hahn-4-p}
\lim_{k\to \infty} \int_M u_{\pm}(x,t_{n_k}) \phi(x) \, d\mathcal V(x) = \int_{M}\phi(x) \, d\mu_{\pm}(x)
\end{equation*}
for all $ \phi \in C_b(M) $. Since the same argument can be performed along any sequence, the validity of \eqref{e68} follows. Note that the conservation of ``mass'' \eqref{e69} is an immediate consequence of \eqref{eq: cont-mass} and \eqref{e68} with the choice $ \phi = 1 $.

\smallskip
Finally, in order to handle a general finite Radon measure $\mu$ (i.e.~not necessarily compactly supported), it is enough to approximate $ \mu $ with the sequence $ \{ \mu\rfloor_{B_n} \} $ as $ n \to \infty $, and proceed in a similar way as above.
%we can proceed as in the end of the proof of \cite[Theorem 3.2]{GMPmu}\,.
\end{proof}

\begin{oss}\label{supercritical}%\label{rem: m-sub}
\rm[\it The case $ \frac{N-2}{N} < m < 1 $\rm]
By using the same techniques as in the proof of Theorem \ref{thmexi}, we can establish existence of weak solutions to problem \eqref{e64} also for $m$ below $1$, in the supercritical fast-diffusion range $ \frac{N-2}{N} < m < 1 $. Indeed, well-posedness of the approximate problems \eqref{e90} still holds, as well as the key estimates \eqref{e95cc}--\eqref{e115}: the assumption $ m > \frac{N-2}{N} $ plays a crucial role in the validity of the smoothing effect \eqref{e95b} (see \cite[Theorem 4.1]{BGV-07}). The only difference lies in the fact that, since $ m < 1 $, the r.h.s.~of \eqref{e120-M-last-2} has to be bounded as follows:
$$ \left| \int_{0}^{t_2} \int_{M} u^m(x,t) \phi(x) \, d\mathcal V(x) dt \right| \le \left\| \phi \right\|_\infty \mathcal{V}\left( \operatorname{supp} \phi \right)^{1-m}  |\mu|(M)^m \, t_2 \, . $$
Actually, the only point that we are not able to recover in Theorem \ref{thmexi} is the conservation of ``mass'' \eqref{e69}. The problem is that, for $m$ smaller than $1$, the analogues of the Euclidean Barenblatt profiles \eqref{shif-baren} we exploit in the proof of Proposition \ref{pro: cons} are no more compactly supported, and their decay rate at infinity is too slow compared to the possible volume growth of the Riemannian manifolds we are interested in. On the other hand, in general mass conservation fails: for instance, on Riemannian manifolds supporting the Poincar\'e/gap inequality $ \| f \|_2 \le \| \nabla f \|_2 $ for all $ f \in C^\infty_c(M) $ (like those whose sectional curvatures are bounded from above by a negative constant), the $ L^1(M) $ norm of the solution vanishes after a finite time \cite[Theorem 6.1]{BGV-07}.
\end{oss}

\subsection{Connection between the Green function and the porous medium equation: proof}\label{alternative}
Let us consider again the solutions $ u_{\varepsilon,R} $ to the approximate problems \eqref{e90}. In the case $\mu\in \mathcal M_F^+(M)$ such solutions are by construction nonnegative: hence, by the standard comparison principle, for all $0<R_1<R_2$ there holds
\begin{equation}\label{e205}
u_{\varepsilon, R_1}(x,t)\, \leq \, u_{\varepsilon, R_2}(x,t) \quad \textrm{for a.e.}\ (x,t)\in B_{R_1}\times (0, \infty)\,.
\end{equation}
For any fixed $ R>0 $, if we let $ \varepsilon \to 0 $ we obtain, by means of the same techniques of proof of Theorem \ref{thmexi}, a nonnegative weak solution $ u_R $ to \eqref{e90} with $ \mu_\varepsilon $ replaced by $ \mu $. By letting $ \varepsilon \to 0 $ in \eqref{e205} we also deduce that order is preserved, namely
\begin{equation*}\label{e206}
u_{R_1}(x,t)\,\leq\, u_{R_2}(x,t) \quad \textrm{for a.e.}\ (x,t)\in B_{R_1}\times (0,\infty)\,,
\end{equation*}
namely the family $\{u_R\}$ is nondecreasing in $R$. As a consequence, the pointwise limit $u$ as $ R \to \infty $ exists regardless of the validity of hypothesis \eqref{H}: in such general framework, this is precisely what we mean as a ``solution'' to \eqref{e64} when $ \mu $ is a positive measure.

\smallskip

\begin{proof}[Proof of Theorem \ref{thmBar}] Let $u_R$ be the solution of problem \eqref{e90} with $\mu_\varepsilon\equiv\delta_{x_0}$,
%\begin{equation}\label{e211}
%\begin{cases}
%u_t \,=\, \Delta(u^m) & \textrm{in}\ B_R \times (0,\infty) \, , \\
%u\,=\,0 & \textrm{on}\ \partial B_R \times (0,\infty) \, , \\
%u \,=\, \delta_{x_0} & \textrm{on}\ B_R \times \{0\} \, ,
%\end{cases}
%\end{equation}
where $ R $ is supposed to be so large that $ x_0 \in B_R $. Let us denote by $\mathcal G_R$ the potential of $u_R$ and by $\mathcal G_R^\phi$ the potential of any $ \phi \in C_c^\infty(B_R)$ (recall \eqref{eq: pot-test}). Given any $ t_2>t_1>0 $, by plugging
%$ \theta_\varrho^{t_\cdot} $ defined as in the proof of Proposition \ref{pro: cons} let $ \color{blue}\{ \theta_\varepsilon
%\}_{\varepsilon>0}\normalcolor $ be the same approximation of $ \chi_{[t_1,t_2]} $ as in the proof of Proposition \ref{prop6}.
the test function $ \varphi(x,t)=\mathcal G_R^\phi(x)[\theta_\varrho^{t_1}(t) - \theta_\varrho^{t_2}(t)] $ in the definition of weak solution ($ \theta_\varrho^{t_\cdot} $ is as in the proof of Proposition \ref{pro: cons}), letting $\varrho \to 0 $ and
exploiting Tonelli's theorem, we end up with the identity
\begin{equation}\label{e212}
\int_{B_R} \mathcal{G}_R (x, t_2) \phi(x) \, d\mathcal V(x) - \int_{B_R} \mathcal{G}_R (x, t_1) \phi(x) \, d\mathcal V(x) \,=\, - \int_{t_1}^{t_2}
\int_{B_R} u_R^m(x,t) \phi(x) \, d\mathcal V(x) dt \, .
\end{equation}
From \eqref{e212} we deduce that the map $ t \mapsto \mathcal{G}_R (x, t) $ is nonincreasing (recall that $  u_R $ is nonnegative). Hence, $ \mathcal{G}_R (t) $ admits a pointwise limit
as $ t \to \infty $. Such limit is necessarily zero: this is a straightforward consequence, for instance, of the smoothing estimate \eqref{e70}, %(with
%$u$ replaced by $u_R$, $ \mu $ replaced by $ \delta_{x_0} $ and $ M $ replaced by $B_R$),
which clearly holds for \eqref{e90} as well. Passing to the limit
in \eqref{e212} as $ t_2 \to \infty $ we then get
\begin{equation}\label{e213}
\int_{B_R} \mathcal{G}_R (x, t_1) \phi(x) \, d\mathcal V(x) \,=\, \int_{t_1}^{\infty} \int_{B_R} u_R^m(x,t) \phi(x) \, d\mathcal V(x) dt \, .
\end{equation}
Letting $t_1\to0$ in \eqref{e213}, recalling the initial condition and using again Tonelli's theorem we infer that
\begin{equation}\label{e213-bis}
\int_{B_R} G_R (x_0, x) \phi(x) \, d\mathcal V(x) \,=\, \int_{B_R} \phi(x) \int_{0}^{\infty} u_R^m(x,t) \, dt d\mathcal V(x) \, .
\end{equation}
Now we point out that both $ x \mapsto \int_{0}^{\infty} u_R^m(x,t) \, dt $ and $ x \mapsto G_R (x_0, x) $ are $\mathfrak{M}-$superharmonic
functions belonging to $ L^1(B_R) $. Indeed, in view of standard results concerning the porous medium equation on bounded domains (see the monograph
\cite{Vaz07}), it is well known that $ \| u_R(t) \|_{L^\infty(B_R)}$ behaves at most like $ t^{-N/[2+N(m-1)]} $ as $ t \to 0 $ and at most like $
t^{-1/(m-1)} $ as $t \to \infty$. This immediately implies that $ \int_{0}^{\infty} u_R^m(t) \, dt \in L^1(B_R) $. Moreover, by means of classical
results, we know that $ u(x,t) $ is continuous in $ B_R \times [t_1,\infty) $ for all $t_1>0$. In particular, by dominated convergence, we deduce
that $ \int_{t_1}^{\infty} u_R^m(t) \, dt $ is also continuous for all $ t_1>0 $. As a consequence of the differential equation solved by $u_R$ and
of the fact that $ \| u_R(t) \|_{L^\infty(B_R)} $ vanishes as $ t \to \infty $, we deduce that $ \int_{t_1}^{\infty} u_R^m(t) \, dt $ is
superharmonic. Hence, thanks to Theorem \ref{teo5}, we can claim that $ \int_{t_1}^{\infty} u_R^m(t) \, dt $ is $ \mathfrak{M}-$superharmonic; so, in
view of Lemma \ref{lemma8}, we can infer that $  \int_{0}^{\infty} u_R^m(t) \, dt $ is also $ \mathfrak{M}-$superharmonic. The fact that $ x \mapsto
G_R (x_0, x) $ is $\mathfrak{M}-$superharmonic follows from Corollary \ref{cor10}; in addition, it belongs to $ L^1(B_R) $
since $B_R$ is bounded (see e.g.~\cite{Grig3}). In view of the above remarks, \eqref{e213-bis} and Lemma \ref{lemma7}, there holds
\begin{equation}\label{e214}
G_R (x_0, x)  \,=\, \int_{0}^{\infty} u_R^m(x,t) \, dt  \quad \textrm{for all}\ x \in B_R \, .
\end{equation}
The thesis then follows from \eqref{e214} by monotone convergence, using the fact that $ G_R \uparrow G$ as $R\to \infty$ everywhere.
\end{proof}

\section{Proof of the uniqueness result}\label{proof-uniq}
We begin this section with a key lemma, which will be very useful in the sequel and which is essentially based on the potential theoretic results given in Sections \ref{Pot-MVP} and \ref{Pot}. To our purposes it is crucial that the limit in \eqref{e131} below is taken for \it every \rm $x$, this following from Proposition \ref{teo10}.
\begin{lem}\label{Lemma11}
Let $u$ be a nonnegative weak solution of problem \eqref{e64} with
$\mu\in \mathcal M^+_F(M)$. Then the potential $\mathcal G(t)$ of
$u(t)$ satisfies the following equation:
\begin{equation}\label{e130a}
\mathcal G_t\,=\, - u^m \quad \textrm{in}\ M\times
(0,\infty)\,,
\end{equation}
in the sense that
\begin{equation}\label{e130}
\int_M \mathcal{G}(x, t_2)\phi(x) \, d\mathcal V(x) - \int_M
\mathcal{G}(x,t_1)\phi(x) \, d\mathcal V(x) = -\int_{t_1}^{t_2}
\int_M u^m(x,t)\phi(x) \, d\mathcal V(x) dt
\end{equation}
for all $ t_2>t_1>0 $, for any $\phi\in C^\infty_c(M)$. In
particular, it admits an absolutely continuous version on $(0,\infty)$ in $L^p(M)$ for all $  p \in (N/(N-2),\infty) $, which is nonincreasing in $t$. Moreover,
\begin{equation}\label{e131}
\lim_{t\to 0} \mathcal G(x,t)\,=\, \mathcal G^\mu(x)\quad
\textrm{for all}\ x\in M\,.
\end{equation}
\end{lem}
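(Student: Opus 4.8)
The plan is to handle the three assertions — the weak equation \eqref{e130a}--\eqref{e130}, the absolutely continuous nonincreasing version, and the pointwise-everywhere limit \eqref{e131} — roughly in this order, the last being logically independent of the first two. \textbf{Step 1: the equation \eqref{e130}.} Fix $\phi\in C^\infty_c(M)$, first with $\phi\ge0$, and let $\mathcal G^\phi$ be its potential: by Lemma \ref{Lemma0}, $\mathcal G^\phi\in C(M)\cap L^p(M)$ for every $p\in(N/(N-2),\infty]$, $\nabla\mathcal G^\phi\in[L^2(M)]^N$, and $-\Delta\mathcal G^\phi=\phi$; moreover by \eqref{e16} it vanishes at infinity, so $\mathcal G^\phi\in C_b(M)$. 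I would insert $\varphi(x,t)=\xi_R(x)\,\mathcal G^\phi(x)\,\theta_\varrho(t)$ into \eqref{e67} — with $\xi_R$ and $\theta_\varrho$ as in the proof of Proposition \ref{prop6} — use $\Delta(\xi_R\mathcal G^\phi)=-\xi_R\phi+2\langle\nabla\xi_R,\nabla\mathcal G^\phi\rangle+\mathcal G^\phi\Delta\xi_R$, let $\varrho\to0$ and then $R\to\infty$. The two error terms produced by the derivatives of $\xi_R$ are controlled exactly as in the computations leading to \eqref{e117} in the proof of Theorem \ref{thmexi}, using $\|\mathcal G^\phi\|_\infty<\infty$, $\nabla\mathcal G^\phi\in L^2(M)$, the cut-off bounds \eqref{e77-grad} and \eqref{e77}, and the integrability \eqref{e65} of $u$ (recall $u\in L^q(M\times(\tau,\infty))$ for every $q\in[1,\infty]$, $\tau>0$); they vanish as $R\to\infty$. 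One is left with $\int_M u(t_2)\mathcal G^\phi\,d\mathcal V-\int_M u(t_1)\mathcal G^\phi\,d\mathcal V=-\int_{t_1}^{t_2}\!\int_M u^m\phi\,d\mathcal V\,dt$, and Fubini's theorem (legitimate since $u(t_i)\in L^1\cap L^\infty$ and $\mathcal G^{|\phi|}\in L^\infty(M)$) converts $\int_M u(t_i)\mathcal G^\phi$ into $\int_M\mathcal G(t_i)\phi$; this is \eqref{e130}, and the sign restriction is removed by linearity.

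\textbf{Step 2: the absolutely continuous version.} By \eqref{e65} and interpolation between $L^1$ and $L^\infty$, $\|u^m(s)\|_p=\|u(s)\|_{mp}^m$ is essentially bounded on every compact subinterval of $(0,\infty)$, so $s\mapsto u^m(s)$ lies in $L^\infty_{\mathrm{loc}}((0,\infty);L^p(M))$ for each $p\in(N/(N-2),\infty)$. Testing \eqref{e130} against all $\phi\in C^\infty_c(M)$ upgrades it to the identity $\mathcal G(t_2)-\mathcal G(t_1)=-\int_{t_1}^{t_2}u^m(s)\,ds$ in $L^p(M)$; hence $t\mapsto\mathcal G(t_0)-\int_{t_0}^t u^m(s)\,ds$ (for any fixed $t_0>0$) is a locally Lipschitz, in particular absolutely continuous, $L^p(M)$-valued version of $\mathcal G$, and since $u\ge0$ forces $u^m\ge0$, it is nonincreasing in $t$. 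This establishes \eqref{e130a}.

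\textbf{Step 3: the limit \eqref{e131}.} Here I would apply Proposition \ref{teo10} with $\mu_n:=u(\cdot,t_n)\,d\mathcal V$ for an arbitrary sequence $t_n\to0^+$ taken in the full-measure set where $u(t_n)$ is a genuine $L^1\cap L^\infty$ function (so that $\mathcal G(\cdot,t_n)=\mathcal G^{\mu_n}$ is continuous and finite by Lemma \ref{Lemma0}). By \eqref{e68}, applied to test functions in $C_c(M)\subset C_b(M)$, we get $\mu_n\rightharpoonup\mu$, while \eqref{e65} gives $\sup_n\|u(t_n)\|_1<\infty$. The tightness hypothesis \eqref{e23} reduces to $\int_{B_R^c}\phi_K\,d\mu_n\le\varepsilon$ with $\phi_K(x):=\int_K G(x,y)\,d\mathcal V(y)$; by \eqref{e16}, $\phi_K(x)\le\widetilde C\,\mathcal V(K)\,[\operatorname{dist}(x,K)]^{2-N}\to0$ as $\operatorname{dist}(x,o)\to\infty$, whence $\int_{B_R^c}\phi_K\,d\mu_n\le(\sup_{B_R^c}\phi_K)\sup_n\|u(t_n)\|_1$, which is $\le\varepsilon$ uniformly in $n$ once $R$ is large. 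Proposition \ref{teo10} then yields $\mathcal G^\mu(x)=\liminf_{n\to\infty}\mathcal G(x,t_n)$ for \emph{every} $x\in M$ and every such sequence. Finally, a soft argument turns this into a full limit: for fixed $x$, if $\limsup_{t\to0}\mathcal G(x,t)>\mathcal G^\mu(x)$ or $\liminf_{t\to0}\mathcal G(x,t)<\mathcal G^\mu(x)$, choosing $t_n\to0$ along which $\mathcal G(x,t_n)$ converges to the offending value would contradict the $\liminf$ identity just established; hence $\lim_{t\to0}\mathcal G(x,t)=\mathcal G^\mu(x)$.

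\textbf{Main obstacle.} The genuinely delicate point is that \eqref{e131} must hold \emph{for every} $x$, not merely in $L^1_{\mathrm{loc}}(M)$ (the latter being immediate from \eqref{e130} by letting $t_1\to0$ and invoking Lemma \ref{Lemma-2}); securing the everywhere statement is exactly the purpose of Proposition \ref{teo10}, so the real work is in verifying its hypotheses — above all the tightness bound \eqref{e23}, which rests on the Green function decay \eqref{e16} together with the uniform mass bound in \eqref{e65} — and in the elementary but indispensable passage from ``$\liminf$ along every sequence'' to a genuine limit. By contrast, Step 1 follows the template of the existence proof, the only extra care being the membership $\mathcal G^\phi\in C_b(M)$ and the cut-off estimates for $u^m$; note in particular that no a priori smoothing bound on the arbitrary weak solution $u$ is needed, only the integrability already built into Definition \ref{defsol1}.
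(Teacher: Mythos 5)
Your proposal is correct and follows essentially the same route as the paper: \eqref{e130} via the $\xi_R\,\mathcal G^\phi\,\theta_\varrho$ test function exactly as in the proof of \eqref{e117}, the absolutely continuous $L^p$-version from \eqref{e130} and Lemma \ref{Lemma0}, and \eqref{e131} from Proposition \ref{teo10} with $\mu_n=u(\cdot,t_n)\,d\mathcal V$. There are two small, harmless deviations worth noting. First, for the tightness hypothesis \eqref{e23} the paper cites Proposition \ref{prop6} (conservation of mass), whereas you use the weaker bound $\sup_n\|u(t_n)\|_1<\infty$ coming directly from \eqref{e65}; only the uniform bound is actually needed, so your route is marginally more economical in what it assumes. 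Second, to pass from the $\liminf$ identity of Proposition \ref{teo10} to the full limit in \eqref{e131}, the paper simply invokes the monotonicity of $t\mapsto\mathcal G(x,t)$ that it has already established (a nonincreasing function of $t$ has a limit as $t\to 0^+$, so the $\liminf$ along one sequence pins it down), while you rerun Proposition \ref{teo10} along \emph{every} sequence and then deploy a soft limsup/liminf-realizing-sequence argument. Your version is valid but slightly longer, since it leaves the monotonicity you proved in Step 2 unused in Step 3; the paper's reuse of monotonicity is the more streamlined choice, and it also sidesteps the extra care you rightly take about restricting $t_n$ to the full-measure set on which $u(t_n)\in L^1\cap L^\infty$ so that the vague convergence from \eqref{e68} applies along every such sequence.
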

\begin{proof}
Consider a cut-off function $\xi\in C^\infty([0,\infty))$ with
\[
\xi=\begin{cases}
1 & \textrm{in}\ [0,1]\,, \\
0 & \textrm{in}\ [2,\infty) \, ,
\end{cases}\;\,\quad \textrm{and}\;\; 0\leq \xi \leq 1\quad \textrm{in} \ [0,\infty)\,.
\]
Set $\rho(x):= d(x,o)$ for all $x\in M$. For every $ R \ge 1 $, define
\begin{equation*}\label{e75}
\xi_R(x):=\xi\left(\frac{\rho(x)}{R}\right) \quad \textrm{for
all}\ x\in M\,.
\end{equation*}
Let $C:=\max\{\sup_{[0,\infty)}|\xi'|, \, \sup_{[0,\infty)}|\xi''|\}$. In view of \eqref{e1} we have
\begin{equation}\label{e76}
\Delta \xi_R(x)\,=\,\frac 1{R^2} \, \xi''\left(\frac{\rho(x)}R\right)
+ \frac{m(\rho, \theta)}{R} \, \xi'\left(\frac{\rho(x)}R\right)\quad \textrm{for
all}\ x\in M\,.
\end{equation}
Clearly
\begin{equation}\label{e78}
\nabla{\xi_R} = 0  \ \ \textrm{and} \ \ \Delta \xi_R = 0 \quad \textrm{in } B_R \cup B_{2R}^c \, ;
\end{equation}
moreover,
\begin{equation*}\label{e77-grad}
\left| \nabla{\xi_R}(x) \right| \le \frac{C}{R} \quad \textrm{for all}\;\; x \in B_{2R} \setminus B_R
\end{equation*}
since $ | \nabla \rho (x) | = 1 $. Furthermore, thanks to assumption \eqref{H}-\textrm{(ii)}, it is not difficult to check that there exists a positive constant $ \hat{C} $ such that \eqref{e4} is fulfilled by a suitable $ \psi $ satisfying $ \psi(\rho)=e^{\hat{C}\rho^2} $ for all $ \rho $ large enough. As a consequence, by exploiting also \eqref{e5}, from \eqref{e76} we can infer that
\begin{equation}\label{e77}
\big|\Delta \xi_R(x)\big| \leq \frac{1}{R^2} \left|\xi''\left(\frac{\rho(x)}{R} \right)\right|+\frac{N-1}{R}\left|
\frac{\psi'(\rho(x))}{\psi(\rho(x))}\xi'\left(\frac{\rho(x)}{R} \right)\right| \leq \frac C{R^2} + 2 C\hat{C}(N-1) \leq \bar C \quad \forall x \in B_{2R} \setminus B_R
\end{equation}
for another positive constant $ \bar C$ that depends only on $ N $, $C$ and $ \hat{C} $.

In view of \eqref{e16}, \eqref{e7} and \eqref{e57}, we have that the potential $ \mathcal G^\phi $ of $\phi$, namely
$$  \mathcal G^\phi(x) := \int_{M} G(x,y) \phi(y) \, d\mathcal V(y) \quad \textrm{for all}\ x \in M  \, , $$
is a regular function belonging to $ C_0(M) $. For every $R \ge 1 $ and $\varrho>0$, we are therefore allowed to pick the test function
$$ \varphi(x,t) := \xi_R(x) \, \mathcal G^\phi(x) \left[\theta_\varrho^{t_1}(t) - \theta_\varrho^{t_2}(t) \right] \quad \textrm{for all } x\in M \, , \ t\geq 0 $$
in \eqref{e67}, where $ \theta_\varrho^{t_\cdot} $ is defined as in the proof of Proposition \ref{pro: cons}. By letting $\varrho\to 0$ we get
\begin{equation}\label{e120}
\begin{aligned}
& \int_M u(x, t_2) \xi_R(x) \mathcal G^\phi(x) \, d\mathcal V(x) -\int_M
u(x, t_1) \xi_R(x) \mathcal G^\phi(x) \, d\mathcal V(x) \\
= & \int_{t_1}^{t_2} \int_M u^m(x,t) \Delta( \xi_R \, \mathcal G^\phi )(x) \, d\mathcal V(x) dt \, ;
\end{aligned}
\end{equation}
the r.h.s.~of \eqref{e120} reads (we use the fact that $ -\Delta \mathcal G^\phi = \phi $ in $M$)
\begin{equation}\label{e120-bis}
\begin{aligned}
& -\int_{t_1}^{t_2} \int_M u^m(x,t) \xi_R(x) \phi (x) \, d\mathcal V(x) dt + \underbrace{\int_{t_1}^{t_2} \int_M u^m(x,t) \Delta \xi_R(x) \mathcal G^\phi(x) \, d\mathcal V(x) dt}_{I_1} \\
& + 2 \underbrace{\int_{t_1}^{t_2} \int_M u^m(x,t) \langle \nabla \xi_R \, , \, \nabla \mathcal G^\phi \rangle (x) \, d\mathcal V(x) dt}_{I_2}  \, .
\end{aligned}
\end{equation}
In view of \eqref{e78}--\eqref{e77}, we can estimate the last two integrals of \eqref{e120-bis} as follows:
\begin{equation}\label{e120-ter}
\left|I_1\right| \le \bar{C} \, \| \mathcal G^\phi \|_\infty \, \int_{t_1}^{t_2} \int_{B_{2R}\setminus B_R} u^m(x,t) \, d\mathcal V(x) dt \, ,
\end{equation}
\begin{equation}\label{e120-quater}
\left|I_2\right| \le \frac{C(t_2-t_1)^{\frac12}}{R} \left( \int_{t_1}^{t_2} \int_{B_{2R}\setminus B_R} u^{2m}(x,t) \, d\mathcal V(x) dt \right)^{\frac12} \left( \int_{B_{2R}\setminus B_R} | \nabla \mathcal G^\phi (x) |^2 \, d\mathcal V(x) \right)^{\frac12} \, .
\end{equation}
Since $u \in L^p(M\times (\tau, \infty))$ for every $\tau>0$, $ p\in [1, \infty]$, and $ \nabla{\mathcal{G}^\phi} \in [L^2(M)]^N $ (recall Lemma \ref{Lemma0}), by letting $ R \to \infty$ %in \eqref{e120-ter}--\eqref{e120-quater}
we deduce that $ I_1$ and $I_2 $ vanish so that, by passing to the limit in \eqref{e120} we get
\begin{equation*}\label{e121}
\int_M u(x, t_2) \mathcal G^\phi(x) \, d\mathcal V(x) - \int_M u(x, t_1) \mathcal G^\phi(x) \, d\mathcal V(x) \,=\, - \int_{t_1}^{t_2} \int_M u^m(x,t) \phi(x) \, d\mathcal V(x) dt \, ,
\end{equation*}
namely \eqref{e130} up to an application of Tonelli's Theorem. The absolute continuity of the potential $\mathcal G(t)$ as a curve in $L^p(M)$ for any $ p \in (N/(N-2),\infty) $ is then a consequence of \eqref{e130} and Lemma \ref{Lemma0} (we use the fact that $ u(t)\in L^1(M)\cap L^\infty(M) $). Since $ u \ge 0 $, still by \eqref{e130} and Lemma \ref{Lemma0} we deduce that for \emph{every} $x\in M$ the function $t\mapsto \mathcal G(x,t)$ is nonincreasing.

In order to establish \eqref{e131}, pick a sequence $\{t_n\}\subset
(0,\infty)$ such that $t_n\to 0$ as $n\to \infty.$ Note that from
\eqref{e16} and the fact that $ u \in L^\infty((0,\infty);L^1(M)) $ we can infer that for each compact subset $K\subset M$ and for any $\varepsilon>0$ there exists $R_\varepsilon>0$ such that
\begin{equation*}\label{e132}
\int_{B^c_R} \int_K G(x,y) \, d\mathcal V(y) \, u(x, t_n) \,  d\mathcal V(x)
\le \varepsilon \quad \textrm{for all}\;\; R>R_\varepsilon \, , \ n \in \mathbb N \, .
\end{equation*}
Furthermore, by Definition \ref{defsol1}, we know that $\{u(t_n)\}$ converges vaguely to $\mu$. We can therefore apply Proposition \ref{teo10} to deduce that
\begin{equation*}\label{e133}
\mathcal G^{\mu}(x) \,=\, \liminf_{n\to \infty} \mathcal G(x,
t_n)\quad \textrm{for every}\ x\in M \, .
\end{equation*}
This implies \eqref{e131}, due to the just mentioned monotonicity
property of $ t \mapsto \mathcal G(x,t)$.
\end{proof}

\subsection{Formal strategy of proof}\label{formal-proof}
Our method of proof is modelled after the one given in \cite{Pierre} in the Euclidean context (see also the proof of \cite[Theorem 3.4]{GMPmu}). We sketch it below.

Let $u_1,u_2$ be two weak solutions of problem \eqref{e64} which take on the same initial measure $\mu \in \mathcal M^+_F(M)\,.$ Let $\mathcal
G_1(t)$ and $\mathcal G_2(t)$ be the corresponding potentials. Given any $h>0$, define the function
\begin{equation}\label{e146}
W(x,t):=\mathcal G_2(x, t+h) - \mathcal G_1(x,t)\quad \textrm{for all}\ x\in M \, , \ t>0\,.
\end{equation}
In view of Lemma \ref{Lemma11}, we have that $W(t)$ satisfies
\begin{equation}\label{e134}
W_t(x,t)\,=%\, u_1^m(x,t)- u_2^m(x, t+h)
a(x,t)\, \Delta W (x,t)\quad \textrm{in}\ M\times (0,\infty)\,,
\end{equation}
where
\begin{equation}\label{e135}
a(x,t):=\begin{cases} \frac{u_1^m(x,t)-u_2^m(x,
t+h)}{u_1(x,t)-u_2(x, t+h)}>0 & \textrm{if}\,\, u_1(x,t)\neq u_2(x,
t+h) \, , \\
0 & \textrm{elsewhere}\,.
\end{cases}
\end{equation}
%Note that, since $m>1$ and $u_1, u_2\in L^\infty(M\times (\tau,
%\infty))$ for each $\tau>0$, we have that $a\geq 0$ and $a\in
%L^\infty(M\times (\tau, \infty))$ for each $\tau>0$\,.
Still Lemma \ref{Lemma11} yields $W(x,0)\leq 0$ in $M$. The conclusion would follow if we could show that $W\le0$ in $ M \times (0,\infty) $, since this would imply, interchanging the roles of $u_1$ and $ u_2 $, that $W=0$ and hence, letting $ h \to 0 $, that $u_1 = u_2$. In order to prove that $W\le0$ one considers solutions of
the {\em dual} problem
\begin{equation}\label{e136}
\begin{cases}
\varphi_t\,=\,- \Delta (a \varphi) & \textrm{in}\ M\times
(0,T]\,, \\
\varphi\,=\, \psi & \textrm{on}\ M\times\{T\}\,,
\end{cases}
\end{equation}
for any $\psi\in C^\infty_c(M)$ with $\psi\geq 0$ and $T>0$.

Using the solutions of such dual problem as test functions in the weak formulation of \eqref{e134} one formally gets
\begin{equation*}\label{e139bis}
\int_M W(x, T) \psi(x) \, d\mathcal V(x)\,=\, \int_M W(x,0)\varphi(x, 0)\, d{\mathcal V}(x) \le 0 .
\end{equation*}
%where $\{\nu^t\}$ is a certain family of positive finite Radon measures.
The claim follows since $\varphi$ is by construction nonnegative. In fact, such procedure must be carefully justified by means of suitable
approximations of problem \eqref{e136}.

\subsection{Existence and basic properties of the approximate solutions $\varphi_{\varepsilon, n}$}\label{basic-eps-n}
For every $n\in \mathbb N$ and $\varepsilon>0$ we consider nonnegative solutions $\varphi_{n,\varepsilon}$ of the problem
\begin{equation}\label{e138}
\begin{cases}
(\varphi_{n,\varepsilon})_t\,=\, -\Delta [(a_n + \varepsilon)\varphi_{n,
\varepsilon}] & \textrm{in}\ M\times (0, T]\,, \\
\varphi_{n, \varepsilon}\,=\, \psi & \textrm{on}\ M\times\{T\}\,,
\end{cases}
\end{equation}
where the sequence $\{a_n\}$ is a suitable approximation of the function $a$ defined by \eqref{e135}. The functions $ \varphi_{\varepsilon, n} $ are constructed by making an appropriate use of linear semigroup theory; in particular, we take advantage of the fact that $-\Delta$ is a positive self-adjoint operator generating a Markov semigroup on $L^2(M)$ (see \cite{Grig}).

The arguments one can exploit in the proof of the forthcoming lemma closely resemble those used to establish \cite[Lemma 5.3]{GMPmu}, hence we skip it.
\begin{lem}\label{Lemma12}
Let $\{a_n\}$ be a sequence of nonnegative functions converging a.e.~to the
function $a$ defined in \eqref{e135} such that:
\begin{itemize}
\item{} for any $n\in \mathbb N$ and $t>0$, $ x \mapsto a_n(x,t)$ is a
regular function; \item{} for any $n\in \mathbb N$ and
$x\in M$, $t \mapsto a_n(x,t)$ is a piecewise constant function,
which is constant on each time interval $(T-(k+1)T/n, \, T-kT/n],$
for every $k\in \{0, \ldots, n-1\}$; \item
$\{\|a_n\|_{L^\infty(M\times (\tau, \infty))}\}$ is uniformly
bounded w.r.t.~$n\in\mathbb{N}$ for any $\tau>0$.
\end{itemize}
Then, for any $\varepsilon>0$ and for any $\psi\in C^\infty_c(M)$
with $\psi\geq 0$, there exists a nonnegative solution
$\varphi_{n, \varepsilon}$ to problem \eqref{e138}, in the sense
that $\varphi_{n, \varepsilon}(t)$ is a continuous curve in
$L^p(M)$ (for all $1<p<\infty$) satisfying $\varphi_{n,
\varepsilon}(T)=\psi$ and it is absolutely continuous on
$(T-(k+1)T/n, \, T-kT/n)$ for each $k\in \{0, \ldots, n-1\}$, so
that the identity
\begin{equation}\label{e141}
\varphi_{n, \varepsilon}(t_2)-\varphi_{n,
\varepsilon}(t_1)\,=\,-\int_{t_1}^{t_2}\Delta[(a_n
+\varepsilon)(\tau) \, \varphi_{n, \varepsilon}(\tau)] \, d\tau
\end{equation}
holds in $L^p(M)$ (for all $1<p<\infty$) for any $t_1, t_2\in
(T-{(k+1)T}/{n}, \, T-{kT}/n )$ and for any $k \in \{0, \ldots, n-1\}$. Moreover,
\begin{equation}\label{e142}
\varphi_{n, \varepsilon}\in L^\infty \left((0,T); L^p(M)\right) \ \,
\textrm{for all}\ p \in [1, \infty] \quad \textrm{and} \quad \|\varphi_{n, \varepsilon}(t)\|_1 \leq \|\psi\|_1 \ \, \textrm{for
all}\ t\in [0, T] \, .
\end{equation}
\end{lem}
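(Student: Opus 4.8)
My plan is to solve \eqref{e138}, for each fixed $n$ and $\varepsilon$, by reducing it to a finite chain of \emph{autonomous} linear parabolic problems --- one on each time-slab $I_k:=(T-(k+1)T/n,\,T-kT/n]$, $k=0,\dots,n-1$, on which $a_n(\cdot,t)$ equals a fixed smooth nonnegative function $b_k$ --- and solving each of them by linear semigroup theory, in the spirit of \cite[Lemma 5.3]{GMPmu}. Write $c_k:=b_k+\varepsilon$, so that $\varepsilon\le c_k$ on $M$ and, for each fixed $k$ (and $n,\varepsilon$), $c_k$ is also bounded above by the assumed regularity and boundedness of $a_n$.

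\textbf{Solution on one slab; concatenation; the $L^p$ bounds.} Fix $k$ and a nonnegative datum $g\in L^1(M)\cap L^\infty(M)$ prescribed at the right endpoint $t=T-kT/n$ of $I_k$ (for $k=0$, $g=\psi$). Reversing time on $I_k$ and setting $W:=c_k\varphi$, the equation $\varphi_t=-\Delta(c_k\varphi)$ turns into the forward autonomous equation $W_\sigma=c_k\Delta W$. On the weighted Hilbert space $H_k:=L^2(M,c_k^{-1}d\mathcal V)$ --- which, since $\varepsilon\le c_k\le\|c_k\|_\infty$, coincides with $L^2(M)$ as a set and carries an equivalent norm --- the form $\mathcal E_k(W,Z):=\int_M\langle\nabla W,\nabla Z\rangle\,d\mathcal V$, with domain $\{W\in H_k:\nabla W\in[L^2(M)]^N\}$, is a Dirichlet form whose associated nonpositive self-adjoint operator acts on $C_c^\infty(M)$ as $W\mapsto c_k\Delta W$; this is exactly the statement that $-\Delta$ is self-adjoint and generates a Markov semigroup on $L^2(M)$ (see \cite{Grig}), transplanted to the measure $c_k^{-1}d\mathcal V$. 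Hence $e^{\sigma c_k\Delta}$ is a strongly continuous, sub-Markovian semigroup on $H_k$, in particular a contraction on each $L^p(M,c_k^{-1}d\mathcal V)$, $p\in[1,\infty]$. I would then set $\varphi_{n,\varepsilon}:=c_k^{-1}\,e^{\sigma c_k\Delta}(c_k g)$ on $I_k$ (with $\sigma$ the reversed time measured from the right endpoint), and concatenate over $k=0,\dots,n-1$, using at each step the value produced at the left endpoint of $I_k$ as the datum at the right endpoint of $I_{k+1}$. This yields a nonnegative function (positivity of the semigroup), with $\varphi_{n,\varepsilon}(T)=\psi$ by construction. Since $\|\varphi_{n,\varepsilon}(t)\|_{L^1(d\mathcal V)}=\|e^{\sigma c_k\Delta}(c_k g)\|_{L^1(c_k^{-1}d\mathcal V)}\le\|c_k g\|_{L^1(c_k^{-1}d\mathcal V)}=\|g\|_{L^1(d\mathcal V)}$ on $I_k$, chaining the slabs gives $\|\varphi_{n,\varepsilon}(t)\|_1\le\|\psi\|_1$ for all $t\in[0,T]$; the analogous $L^\infty$ estimate on each slab (bounded, though not contractive, because $c_k$ is bounded above) together with interpolation yields $\varphi_{n,\varepsilon}\in L^\infty((0,T);L^p(M))$ for all $p\in[1,\infty]$, i.e.\ \eqref{e142}.

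\textbf{Regularity and the integral identity.} Since $c_k\Delta$ is self-adjoint and nonpositive it generates an \emph{analytic} semigroup, so $\sigma\mapsto e^{\sigma c_k\Delta}(c_k g)$ is smooth into $\bigcap_m D((c_k\Delta)^m)$ for $\sigma>0$; transplanting back, $\varphi_{n,\varepsilon}$ is absolutely continuous in the interior of each $I_k$ and there solves $(\varphi_{n,\varepsilon})_t=-\Delta[(a_n+\varepsilon)\varphi_{n,\varepsilon}]$ classically. Integrating this equation between any $t_1,t_2$ in the same open slab, and using the $L^p$ bounds together with interior elliptic regularity to view $\Delta[(a_n+\varepsilon)(\tau)\varphi_{n,\varepsilon}(\tau)]$ as an $L^p$-valued integrand, gives \eqref{e141}. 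Strong continuity of the sub-Markovian semigroup on $L^p(M,c_k^{-1}d\mathcal V)$ ($1\le p<\infty$), together with the norm equivalence on a fixed slab, shows $\varphi_{n,\varepsilon}\in C(\bar I_k;L^p(M))$ for $p\in(1,\infty)$, and gluing the slabwise curves gives a curve continuous on all of $[0,T]$ in $L^p(M)$.

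\textbf{Main obstacle.} The delicate points, neither conceptually new, are: (i) the coefficient $a$ defined in \eqref{e135} blows up as $t\to0^+$, so the construction of the sequence $\{a_n\}$ must be arranged so that the value $b_{n-1}$ on the slab adjacent to $t=0$ is still a \emph{bounded} smooth function (this can always be achieved by truncating and mollifying $a$; if one insisted on keeping $b_{n-1}$ unbounded, a further approximation --- truncating $c_k$ at level $L$, solving, and letting $L\to\infty$ via the uniform $L^1$-contraction and compactness --- would be needed); and (ii) upgrading the regularity of $W$ from $H_k$-valued to $L^p(M)$-valued, so that \eqref{e141} holds for all $p\in(1,\infty)$ rather than just $p=2$, which requires combining the sub-Markovian property on the scale $\{L^p(M,c_k^{-1}d\mathcal V)\}_p$ with local elliptic estimates. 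Step (ii) is where most of the bookkeeping lies, but it runs parallel to \cite[Lemma 5.3]{GMPmu} and presents no essential difficulty.
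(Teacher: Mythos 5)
The paper itself gives no proof of this lemma, noting only that ``the arguments one can exploit in the proof of the forthcoming lemma closely resemble those used to establish [GMPmu, Lemma 5.3], hence we skip it,'' and hints at ``an appropriate use of linear semigroup theory'' and the fact that $-\Delta$ generates a Markov semigroup on $L^2(M)$. Your proposal fills in exactly this semigroup argument in a correct way: slab-by-slab reduction to autonomous problems, the substitution $W=c_k\varphi$ together with the weighted space $L^2(M,c_k^{-1}d\mathcal V)$ to make $c_k\Delta$ self-adjoint and sub-Markovian, concatenation and $L^p$-interpolation. This is the same approach the paper is referring to, carried out in detail; your remark about the slab adjacent to $t=0$ is in fact moot, since by the piecewise-constant structure the value of $a_n$ there is $a$ evaluated at a positive time and is therefore bounded by the third hypothesis.
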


\smallskip

In the proofs of the next lemmas, even if we follow the general strategy used to show analogous results in \cite{GMPmu}, there are some additional difficulties to overcome. They are related to the fact that an analogue of \cite[Proposition B.1]{GMPmu} is not available in the present framework, because of a possible different growth of the volume of balls. Thus, more delicate cut-off arguments are required.

\smallskip

We now prove some crucial identities involving the functions $ \varphi_{n,\varepsilon} $ and $ W $.
\begin{lem}\label{Lemma13}
Let $W$ be defined as in \eqref{e146}, $a$ as in \eqref{e135}, and $a_n$, $\varphi_{n, \varepsilon}$, $\psi$ as in Lemma \ref{Lemma12}. Then the identity
\begin{equation}\label{e147}
\begin{aligned}
&\int_M W(x,T) \psi(x) \, d\mathcal V(x) - \int_M W(x,t) \varphi_{n,
\varepsilon}(x,t) \, d\mathcal V(x) \\  = & - \int_t^T\int_M \left[a_n(x,
\tau) + \varepsilon-a(x,\tau)\right] \Delta W(x, \tau)\varphi_{n,
\varepsilon}(x, \tau) \, d\mathcal V(x) d\tau
\end{aligned}
\end{equation}
holds for all $t\in (0,T)$.
\end{lem}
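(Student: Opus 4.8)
The plan is to derive \eqref{e147} from the formal product rule
\[
\frac{d}{d\tau}\int_M W(\tau)\,\varphi_{n,\varepsilon}(\tau)\,d\mathcal V \;=\; \int_M W_\tau\,\varphi_{n,\varepsilon}\,d\mathcal V + \int_M W\,(\varphi_{n,\varepsilon})_\tau\,d\mathcal V ,
\]
after inserting $W_\tau = a\,\Delta W = u_1^m(\cdot,\tau)-u_2^m(\cdot,\tau+h)$ (equation \eqref{e134}, which follows rigorously from \eqref{e130} applied to $\mathcal G_1$ and $\mathcal G_2$), $(\varphi_{n,\varepsilon})_\tau = -\Delta[(a_n+\varepsilon)\varphi_{n,\varepsilon}]$ (identity \eqref{e141}), and finally the Green-type identity
\begin{equation}\label{e-green-plan}
\int_M W\,\Delta\big[(a_n+\varepsilon)\varphi_{n,\varepsilon}\big]\,d\mathcal V \;=\; \int_M (\Delta W)\,(a_n+\varepsilon)\varphi_{n,\varepsilon}\,d\mathcal V .
\end{equation}
Granting \eqref{e-green-plan}, integrating the product rule over $\tau\in(t,T)$, using $\varphi_{n,\varepsilon}(T)=\psi$ and the elementary relation $a\big(u_1(\cdot,\tau)-u_2(\cdot,\tau+h)\big)=u_1^m(\cdot,\tau)-u_2^m(\cdot,\tau+h)$, gives exactly \eqref{e147}.

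To make this rigorous I would first reduce to a single subinterval $I_k:=(T-(k+1)T/n,\,T-kT/n)$ of the time partition, on which $a_n$ does not depend on $\tau$ and $\varphi_{n,\varepsilon}$ is, by Lemma \ref{Lemma12}, an absolutely continuous curve in $L^p(M)$ for every $p\in(1,\infty)$ with $(\varphi_{n,\varepsilon})_\tau=-\Delta[(a_n+\varepsilon)\varphi_{n,\varepsilon}]$ in $L^p(M)$; since $\varphi_{n,\varepsilon}$ is globally continuous and $\varphi_{n,\varepsilon}(T)=\psi$, the statement on all of $(0,T)$ then follows by summing over the finitely many subintervals and letting the endpoints tend to the partition nodes. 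On $I_k$, by Lemma \ref{Lemma11} the potential $W(\tau)=\mathcal G_2(\cdot,\tau+h)-\mathcal G_1(\cdot,\tau)$ --- the Green potential of the $L^1\cap L^\infty$ function $u_2(\cdot,\tau+h)-u_1(\cdot,\tau)$ --- is absolutely continuous as a curve in $L^q(M)$ for every $q\in(N/(N-2),\infty)$, with $W_\tau=u_1^m(\cdot,\tau)-u_2^m(\cdot,\tau+h)\in L^1(M)\cap L^\infty(M)$, $\Delta W(\tau)=u_1(\cdot,\tau)-u_2(\cdot,\tau+h)\in L^1(M)\cap L^\infty(M)$, and $a$ bounded on $M\times[t,T]$ for every $t>0$ by the smoothing effect \eqref{e70}. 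Choosing conjugate exponents $q\in(N/(N-2),\infty)$ and $p=q/(q-1)\in(1,N/2)$, the two factors in $\int_M W(\tau)\varphi_{n,\varepsilon}(\tau)\,d\mathcal V$ lie in dual Lebesgue spaces, so the standard product rule for absolutely continuous curves in dual Banach spaces applies, and integrating it over $\tau\in I_k$ after substituting the above expressions for $W_\tau$ and $(\varphi_{n,\varepsilon})_\tau$ leaves only \eqref{e-green-plan} to be proved at a fixed time.

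For \eqref{e-green-plan}, set $\zeta:=(a_n+\varepsilon)\varphi_{n,\varepsilon}(\tau)$; then $\zeta\in L^r(M)$ for all $r\in[1,\infty]$ and $\Delta\zeta=-(\varphi_{n,\varepsilon})_\tau\in L^r(M)$ for all $r\in(1,\infty)$, so $\zeta,\Delta\zeta\in L^2(M)$ and hence $\zeta\in H^1(M)$. Since $-\Delta$ is essentially self-adjoint on $C^\infty_c(M)$ (Section \ref{RG}), pick $\zeta_j\in C^\infty_c(M)$ with $\zeta_j\to\zeta$, $\nabla\zeta_j\to\nabla\zeta$ and $\Delta\zeta_j\to\Delta\zeta$ in $L^2(M)$. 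For each $\zeta_j$ one has, on the one hand, $\int_M W\,\Delta\zeta_j\,d\mathcal V=\int_M(\Delta W)\,\zeta_j\,d\mathcal V$ directly, by Fubini and \eqref{e11}, because $W$ is the Green potential of $-\Delta W\in L^1\cap L^\infty$, and on the other hand $\int_M W\,\Delta\zeta_j\,d\mathcal V=-\int_M\langle\nabla W,\nabla\zeta_j\rangle\,d\mathcal V$ because $\nabla W\in[L^2(M)]^N$ by Lemma \ref{Lemma0}. Letting $j\to\infty$ we get $\int_M(\Delta W)\,\zeta\,d\mathcal V=-\int_M\langle\nabla W,\nabla\zeta\rangle\,d\mathcal V$, so \eqref{e-green-plan} is equivalent to the integration by parts $\int_M W\,\Delta\zeta\,d\mathcal V=-\int_M\langle\nabla W,\nabla\zeta\rangle\,d\mathcal V$. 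One attacks the latter by inserting a cut-off $\xi_R$: the terms $\int_M\xi_R W\,\Delta\zeta\,d\mathcal V$ and $\int_M\xi_R\langle\nabla W,\nabla\zeta\rangle\,d\mathcal V$ tend to the desired ones by dominated convergence (using $W\in L^q\cap L^\infty$, $\Delta\zeta\in L^p$, and $\nabla W,\nabla\zeta\in[L^2(M)]^N$), and the whole matter reduces to showing that the remainder $\int_M W\,\langle\nabla\xi_R,\nabla\zeta\rangle\,d\mathcal V$ is infinitesimal as $R\to\infty$.

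That remainder is the main obstacle. With the naive cut-off $\xi_R(x)=\xi(\operatorname{dist}(x,o)/R)$ one only controls $|\nabla\xi_R|\le C/R$ on the annulus $B_{2R}\setminus B_R$, and since such annuli may have exponentially growing volume the crude bound $\frac{C}{R}\,\|W\|_{L^2(B_{2R}\setminus B_R)}\,\|\nabla\zeta\|_{L^2(B_{2R}\setminus B_R)}$ need not vanish even though $W\in L^q\cap L^\infty$ and $\nabla\zeta\in[L^2(M)]^N$; this is precisely the failure of the analogue of \cite[Proposition B.1]{GMPmu} alluded to just before the statement. The remedy is to use cut-offs \emph{adapted to the Green function}, of the form $\xi_R(x)=\psi_R(1/G(x_0,x))$ with $\psi_R$ a carefully chosen profile as in the proof of Theorem \ref{teo5}: then $\operatorname{supp}\nabla\xi_R\subset\{G(x_0,\cdot)<1/R\}$, a family of sets decreasing to $\emptyset$ because $G(x_0,\cdot)>0$ everywhere and $G(x_0,x)\to0$ as $\operatorname{dist}(x,x_0)\to\infty$ (recall \eqref{e16}), so that $\int_{\operatorname{supp}\nabla\xi_R}|\nabla\zeta|^2\,d\mathcal V\to0$, while the smooth coarea formula together with the flux normalization $\int_{\{G(x_0,\cdot)=s\}}|\nabla_y G(x_0,y)|\,dS=1$ turns $\int_M|\nabla\xi_R|^2\,d\mathcal V$ into an integral in $\psi_R$ alone; a careful choice of $\psi_R$, exploiting also the full integrability of $W$ from Lemma \ref{Lemma0} and $\varphi_{n,\varepsilon}\in L^r$ for every $r$, makes the remainder disappear in the limit. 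Once \eqref{e-green-plan} is proved, inserting it into the differentiated identity, integrating over $\tau\in I_k$, summing over $k$ and passing to the partition nodes yields \eqref{e147} for every $t\in(0,T)$.
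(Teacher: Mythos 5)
Your overall frame is the same as the paper's: reduce to the subintervals $I_k$ of the time partition on which $a_n$ is constant in time, differentiate $t\mapsto\int_M W\,\varphi_{n,\varepsilon}\,d\mathcal V$, substitute $W_\tau=a\Delta W$ and $(\varphi_{n,\varepsilon})_\tau=-\Delta[(a_n+\varepsilon)\varphi_{n,\varepsilon}]$, and integrate over $\tau$. The divergence is entirely in how the Green-type identity \eqref{e-green-plan} is justified, and there the proposal contains a genuine error of diagnosis and then a gap.

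You claim the naive radial cut-off $\xi_R(x)=\xi(\operatorname{dist}(x,o)/R)$ ``need not'' give a vanishing remainder because of possibly exponential volume growth of the annuli, and therefore switch to a Green-function adapted cut-off. This is not what the paper does, and the diagnosis is not accurate: the paper uses precisely the cut-off $\xi_R$ from Proposition \ref{prop6}. What is different is the distribution of derivatives. Instead of integrating by parts once to land on the remainder $\int_M W\,\langle\nabla\xi_R,\nabla\zeta\rangle\,d\mathcal V$ (which indeed is awkward, because $\nabla\zeta$ is only in $[L^2(M)]^N$ and, for $N\le4$, $W=\mathcal G^{u_2(\cdot+h)-u_1(\cdot)}$ need not be in $L^2(M)$ by Lemma \ref{Lemma0}), the paper performs the double integration by parts $\int_M\xi_R W\,\Delta\zeta\,d\mathcal V=\int_M\Delta(\xi_R W)\,\zeta\,d\mathcal V$ and expands $\Delta(\xi_R W)=\xi_R\Delta W+2\langle\nabla\xi_R,\nabla W\rangle+W\,\Delta\xi_R$. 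The two resulting remainders,
\[
I_1=\int_M \Delta\xi_R\,W\,[a_n+\varepsilon]\varphi_{n,\varepsilon}\,d\mathcal V, \qquad
I_2=\int_M \langle\nabla\xi_R,\nabla W\rangle\,[a_n+\varepsilon]\varphi_{n,\varepsilon}\,d\mathcal V,
\]
are then harmless with the \emph{naive} cut-off: $I_1$ is controlled by $\|W\|_\infty\,\|\Delta\xi_R\|_\infty\int_{B_{2R}\setminus B_R}|\varphi_{n,\varepsilon}|\,d\mathcal V\to0$ using $\varphi_{n,\varepsilon}(t)\in L^1(M)$ and the \emph{uniform} bound $|\Delta\xi_R|\le\bar C$ of \eqref{e77} (this is where hypothesis \eqref{H}-(ii) enters, via the Laplacian comparison \eqref{e4}; it is a curvature estimate, not a volume estimate), and $I_2$ is controlled by $\frac{C}{R}\bigl(\int_{B_{2R}\setminus B_R}|\nabla W|^2+\int_{B_{2R}\setminus B_R}\varphi_{n,\varepsilon}^2\bigr)\to0$ using $\nabla W(t)\in[L^2(M)]^N$ (Lemma \ref{Lemma0}) and $\varphi_{n,\varepsilon}(t)\in L^2(M)$. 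Neither estimate references the volume of the annulus; tail decay of globally integrable quantities suffices. So the obstacle you identify is an artefact of your grouping, and the remedy is not a fancier cut-off but putting $\varphi_{n,\varepsilon}$ (which enjoys $L^1\cap L^\infty$ control) and $\nabla W$ (which enjoys $L^2$ control) in the remainder rather than $W$ and $\nabla\zeta$.

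Beyond the misdiagnosis, the Green-adapted cut-off you propose is not carried out: you say that ``a careful choice of $\psi_R$ \dots makes the remainder disappear in the limit'' without specifying the profile or checking the estimate, so even granting the alternative route, the proof as written has a hole exactly at the point that required the most work. You also do not invoke the uniform bound on $\Delta\xi_R$ from \eqref{e77}, which is the genuine geometric input that makes the paper's argument (and in fact hypothesis \eqref{H}-(ii)) necessary; without it, even the paper's scheme would not close.
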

\begin{proof}
Let us set
\begin{equation*}\label{e159} t_k:=\frac{T(n-k)}{n}
\quad \textrm{for all}\ k \in \{0, \ldots, n\}\,.
\end{equation*}
Thanks
to Lemma \ref{Lemma11}, we know that $W(t)$ is an absolutely
continuous curve in $L^p(M)$ for all $p\in (N/(N-2), \infty)$,
satisfying \eqref{e134}. On the other hand Lemma \ref{Lemma12}
ensures that $\varphi_{n, \varepsilon}(t)$ is a continuous curve
in $L^p(M)$ for all $p\in (1, \infty)$ on $(0, T]$, absolutely
continuous on $(t_{k+1}, t_{k})$ for each $k\in \{0, \ldots,
n-1\}$ and satisfying the differential equation in \eqref{e138} on
such intervals. Hence, the function
\begin{equation*}\label{e148}
t \mapsto \int_M  \xi_R(x) W(x,t) \varphi_{\varepsilon, n}(x,t) \, d\mathcal V(x) \, ,
\end{equation*}
where $ \{ \xi_R \}_{R>0} $ is a cut-off family as in the proof of Lemma \ref{Lemma11}, is continuous on $(0, T]$, absolutely continuous on $(t_{k+1}, t_k)$ and satisfies
\begin{equation}\label{e149}
\begin{aligned}
& \frac{d}{dt} \int_M \xi_R(x) W(x,t) \varphi_{n, \varepsilon}(x,t) \, d\mathcal V(x) \\
= & \int_M \left\{\xi_R(x) a(x,t) \Delta W(x,t) \varphi_{n,
\varepsilon}(x,t)-\xi_R(x)W(x,t)\Delta[(a_n+\varepsilon) \varphi_{n, \varepsilon}](x,t) \right\} d\mathcal V(x) \quad \textrm{on}\ (t_{k+1}, t_k)\,.
\end{aligned}
\end{equation}
By standard elliptic regularity, we have that $ W(t) \in W^{2,p}_{\rm loc}(M)  $ for all $ p \in (1,\infty) $. We can therefore integrate by parts the last term in the r.h.s.~of \eqref{e149} to get:
\begin{equation}\label{e149-bis}
\begin{aligned}
& \int_M \xi_R(x) W(x,t) \Delta[(a_n+\varepsilon) \varphi_{n, \varepsilon}](x,t) \, d\mathcal V(x) \\
 = & \int_M \xi_R(x) \Delta W(x,t) [a_n(x,t) + \varepsilon] \varphi_{n, \varepsilon}(x,t) \, d\mathcal V(x) + \underbrace{\int_M \Delta \xi_R(x) W(x,t)  [a_n(x,t) + \varepsilon] \varphi_{n, \varepsilon}(x,t) \, d\mathcal V(x)}_{I_1(t)} \\
 & + 2 \underbrace{\int_M \langle \nabla \xi_R(x),\nabla W(x,t) \rangle \,  [a_n(x,t) + \varepsilon] \varphi_{n, \varepsilon}(x,t) \, d\mathcal V(x)}_{I_2(t)} \, .
\end{aligned}
\end{equation}
By reasoning similarly to the proof of Lemma \ref{Lemma11} and exploiting Lemma \ref{Lemma0} with $ f=u_2(t+h)-u_1(t) $ and \eqref{e78}--\eqref{e77}, we obtain the following estimates:
\begin{equation}\label{e149-ter}
|I_1(t)| \le \bar{C} \, \| W(t) \|_\infty \, \| a_n(t) + \varepsilon \|_\infty \, \int_{B_{2R}\setminus B_R} \varphi_{n, \varepsilon}(x,t) \, d\mathcal V(x) \, ,
\end{equation}
\begin{equation}\label{e149-quater}
|I_2(t)| \le \frac{C \| a_n(t) + \varepsilon \|_\infty}{2R} \left( \int_{B_{2R}\setminus B_R} \left|\nabla W(x,t) \right|^2 \, d\mathcal V(x) + \int_{B_{2R}\setminus B_R} \varphi_{n, \varepsilon}^2(x,t) \, d\mathcal V(x) \right) .
\end{equation}
Integrating \eqref{e149}, \eqref{e149-bis}, \eqref{e149-ter}, \eqref{e149-quater} between any $ t_{k+1} < t_\ast < t^\ast < t_{k} $, noting that $ \nabla W \in [L^2(M \times (t_\ast,t^\ast) )]^N $ and $ \varphi_{n, \varepsilon} \in L^2(M \times (t_\ast,t^\ast) ) $, and letting $ R \to \infty $, we end up with
\begin{equation}\label{e147-last}
\begin{aligned}
&\int_M W(x,t^\ast) \varphi_{n,
\varepsilon}(x,t^\ast) \, d\mathcal V(x) - \int_M W(x,t_\ast) \varphi_{n,
\varepsilon}(x,t_\ast) \, d\mathcal V(x) \\  = & - \int_{t_\ast}^{t^\ast}\int_M \left[a_n(x,
\tau) + \varepsilon-a(x,\tau)\right] \Delta W(x, \tau)\varphi_{n,
\varepsilon}(x, \tau) \, d\mathcal V(x) d\tau \, .
\end{aligned}
\end{equation}
The validity of \eqref{e147} just follows from \eqref{e147-last}, since the r.h.s.~of \eqref{e147-last} is in $L^1((\tau, T))$ (e.g.~as a function of $ t^\ast $) for all $\tau\in (0, T)$.
\end{proof}

\begin{lem}\label{Lemma14}
Let $a_n$, $\varphi_{n, \varepsilon}$, $\psi$ be as in Lemma \ref{Lemma12}. Then
\begin{equation}\label{e152}
\begin{aligned}
& \int_M \varphi_{n,\varepsilon}(x,t) \phi(x) \, d\mathcal{V}(x) - \int_M \psi(x) \phi(x) \, d\mathcal{V}(x) \\
= & \int_M \Delta \phi (x) \left[ \int_t^T (a_n(x,\tau)
+\varepsilon) \varphi_{n,\varepsilon}(x,\tau) \, d\tau  \right]
d\mathcal{V}(x)  \quad \textrm{for all}\ t \in (0,T), \, \phi\in
C^\infty_c(M) \, .
\end{aligned}
\end{equation}
In particular,
\begin{equation}\label{e153}
\int_M \varphi_{n,\varepsilon}(x,t) \, d\mathcal{V}(x) = \int_M \psi(x)\, d\mathcal{V}(x) \quad \textrm{for all}\ t \in (0,T) \, .
\end{equation}
\end{lem}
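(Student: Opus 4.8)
The plan is to test the identity \eqref{e141} of Lemma \ref{Lemma12} against an arbitrary $ \phi\in C^\infty_c(M) $, integrate by parts so as to transfer the Laplacian onto $ \phi $, and then glue the resulting relations across the time grid $ t_k:=T(n-k)/n $ by means of the continuity of $ \varphi_{n,\varepsilon} $ on $ (0,T] $ as a curve in $ L^p(M) $. Identity \eqref{e153} will follow from \eqref{e152} via the cut-off family of Proposition \ref{prop6}.

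First I would fix $ \phi\in C^\infty_c(M) $ and $ t\in(0,T) $. On a subinterval $ (t_{k+1},t_k) $ and for $ t_{k+1}<t'<t''<t_k $, pairing \eqref{e141} with $ \phi $ and using that $ -\Delta $ is essentially self-adjoint on $ C^\infty_c(M) $ (so the $ \Delta $ in \eqref{e141} is the distributional Laplacian, whence $ \int_M \Delta[(a_n+\varepsilon)(\tau)\varphi_{n,\varepsilon}(\tau)]\,\phi\,d\mathcal V=\int_M(a_n+\varepsilon)(\tau)\,\varphi_{n,\varepsilon}(\tau)\,\Delta\phi\,d\mathcal V $, legitimate since $ \Delta[(a_n+\varepsilon)(\tau)\varphi_{n,\varepsilon}(\tau)]\in L^p(M) $ for a.e.~$ \tau $ by \eqref{e141}), together with Fubini's theorem — applicable because $ a_n $ is bounded on $ M\times[t,T] $ by hypothesis, $ \|\varphi_{n,\varepsilon}(\tau)\|_1\le\|\psi\|_1 $ by \eqref{e142}, and $ \Delta\phi $ is bounded with compact support, so that $ (a_n+\varepsilon)\varphi_{n,\varepsilon}\,\Delta\phi\in L^1(M\times(t,T)) $ — one obtains
\begin{equation*}
\int_M\left[\varphi_{n,\varepsilon}(x,t'')-\varphi_{n,\varepsilon}(x,t')\right]\phi(x)\,d\mathcal V(x)=-\int_{t'}^{t''}\!\int_M(a_n(x,\tau)+\varepsilon)\,\varphi_{n,\varepsilon}(x,\tau)\,\Delta\phi(x)\,d\mathcal V(x)\,d\tau\,.
\end{equation*}
Letting $ t'\downarrow t_{k+1} $ and $ t''\uparrow t_k $, the left-hand side passes to the limit since $ \varphi_{n,\varepsilon} $ is $ L^p(M) $-continuous on $ (0,T] $ and $ \phi\in L^{p'}(M) $, while the right-hand side converges by dominated convergence; hence the same identity holds with $ t'=t_{k+1} $ and $ t''=t_k $.

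Summing these identities over the grid points contained in $ [t,T] $ (with the lowest endpoint replaced by $ t $, and recalling $ \varphi_{n,\varepsilon}(T)=\psi $), the intermediate terms telescope and, after a further use of Fubini, this is precisely \eqref{e152}. For \eqref{e153} I would then apply \eqref{e152} with $ \phi=\xi_R $, the cut-off family of Proposition \ref{prop6}: by \eqref{e78} and \eqref{e77} the right-hand side is bounded in absolute value by $ \bar C\int_{B_{2R}\setminus B_R}\big(\int_t^T(a_n(x,\tau)+\varepsilon)\varphi_{n,\varepsilon}(x,\tau)\,d\tau\big)\,d\mathcal V(x) $, which vanishes as $ R\to\infty $ because $ x\mapsto\int_t^T(a_n(x,\tau)+\varepsilon)\varphi_{n,\varepsilon}(x,\tau)\,d\tau $ lies in $ L^1(M) $ (its integral over $ M $ does not exceed $ (T-t)\big(\sup_{[t,T]}\|a_n(\tau)\|_\infty+\varepsilon\big)\|\psi\|_1 $), whereas on the left-hand side $ \int_M\varphi_{n,\varepsilon}(x,t)\,\xi_R(x)\,d\mathcal V(x)\to\int_M\varphi_{n,\varepsilon}(x,t)\,d\mathcal V(x) $ by dominated convergence (dominating function $ \varphi_{n,\varepsilon}(t)\in L^1(M) $) and $ \int_M\psi\,\xi_R\,d\mathcal V=\int_M\psi\,d\mathcal V $ once $ R $ is large enough. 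The main obstacle is organizational rather than deep: correctly chaining the subinterval identities through the nodes $ t_k $, where only continuity of $ \varphi_{n,\varepsilon} $ (not absolute continuity) is available, and justifying the integration by parts; both are routine given Lemmas \ref{Lemma11} and \ref{Lemma12} and the essential self-adjointness of $ -\Delta $ recalled in Section \ref{RG}.
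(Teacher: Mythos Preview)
Your proof is correct and follows the same route as the paper: identity \eqref{e152} is obtained by pairing \eqref{e141} with $\phi$, moving the Laplacian onto $\phi$, and telescoping across the grid using the $L^p$-continuity of $\varphi_{n,\varepsilon}$, while \eqref{e153} follows by taking $\phi=\xi_R$ and exploiting \eqref{e78}, \eqref{e77} together with $\varphi_{n,\varepsilon}\in L^1(M\times(0,T))$. Your write-up is in fact more detailed than the paper's own one-line justification of \eqref{e152}.
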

\begin{proof}
%Multiplying equation \eqref{e141} by any $\phi\in
%C^\infty_c(M)$ and integrating over $M$ we get
%\begin{equation}\label{e157}
%\begin{aligned}
%& \int_M \varphi_{n,\varepsilon}(x,\underline t) \phi(x) \, d\mathcal{V}(x) - \int_M \varphi_{n,\varepsilon}(x,\overline t) \phi(x) \, d\mathcal{V}(x) \\
%= & \int_M \Delta \phi (x) \left[ \int_{\underline t}^{\overline
%t} (a_n(x,\tau) +\varepsilon) \varphi_{n,\varepsilon}(x,\tau) \,
%d\tau  \right] d\mathcal{V}(x)
%\end{aligned}
%\end{equation}
%for all $\underline t, \overline t\in (t_{k+1}, t_k)$, where $t_k$ is defined by
%\begin{equation}\label{e159} t_k:=\frac{T(n-k)}{n}
%\quad \textrm{for all}\ k \in \{0, \ldots, n\}\,.
%\end{equation}
%Since $\varphi_{n, \varepsilon}(t)$ is a continuous curve in $L^p(M)$ for all $p\in (1, \infty)$, the validity of \eqref{e157} for all $\underline t,
%\overline t\in (0, T]$ follows. Therefore, \eqref{e152} holds.
The validity of \eqref{e152} just a consequence of \eqref{e141} plus the continuity of $ \varphi_{n,\varepsilon}(t) $ as a curve in $ L^2(M) $ (for instance).

In order to establish \eqref{e153}, let us plug $\phi=\xi_R$ in \eqref{e152}, with $\xi_R$ still
defined as in the proof of Lemma \ref{Lemma11}. Thanks to \eqref{e78} and \eqref{e77}, we obtain:
\begin{equation}\label{e160}
\begin{aligned}
&\left|\int_M \varphi_{n, \varepsilon}(x,t)\xi_R(x)\,d\mathcal
V(x) - \int_M \psi(x)\xi_R(x)\, d\mathcal V(x) \right|\\
\leq & \bar C \, \|a_n + \varepsilon\|_{L^\infty(M\times (t,
T))}\int_{B_{2R}\setminus B_R}\int_t^T \varphi_{n,
\varepsilon}(x,\tau) \, d\mathcal V(x) d\tau\,.
\end{aligned}
\end{equation}
Since \eqref{e142} trivially implies $\varphi_{n, \varepsilon}\in
L^1(M\times (0, T))$, by letting $R\to\infty$ in \eqref{e160} we
deduce \eqref{e153}\,.
\end{proof}

\begin{lem}\label{Lemma15}
Let $a_n$, $\varphi_{n, \varepsilon}$, $\psi$ be as in Lemma
\ref{Lemma12}. We denote as $\Phi_{n,\varepsilon}(t)$ the potential of
$\varphi_{n, \varepsilon}(t)$, that is
\[\Phi_{n,\varepsilon}(x,t):=\mathcal G^{\varphi_{n,\varepsilon}(t)}(x) \, . \]
Then $ \nabla \Phi_{n, \varepsilon}(t) \in [L^2(M)]^N $ and the identity
\begin{equation}\label{e154}
\|\nabla \mathcal G^{\psi}\|_{2}^2\,=\,\|\nabla
\Phi_{n,\varepsilon}(t)\|_2^2\,+\,2\int_t^T\int_M[a_n(x,\tau)+\varepsilon]\varphi_{n,
\varepsilon}^2(x,\tau)\, d\mathcal V(x) d\tau
\end{equation}
holds for all $t\in (0, T]\,.$ %Moreover,
%\begin{equation}\label{e155}
%\|\Phi_{n, \varepsilon}(t)\|_p\leq C \|\psi\|_1\quad \textrm{for
%any}\;\, t\in (0, T], \; 1<p<\frac{N}{N-2}\,,
%\end{equation}
%where $C=C(p, N)$ is a positive constant as in \eqref{e110b}\,.
\end{lem}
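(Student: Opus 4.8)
The plan is to read \eqref{e154} as an $H^{-1}$-type energy identity for the curve $\varphi_{n,\varepsilon}(t)$, realised concretely through its potential $\Phi_{n,\varepsilon}$ and a cut-off argument. By \eqref{e142} we have $\varphi_{n,\varepsilon}(t)\in L^1(M)\cap L^\infty(M)$ for every $t\in[0,T]$, so Lemma \ref{Lemma0} applies with $f=\varphi_{n,\varepsilon}(t)$ and yields at once $\nabla\Phi_{n,\varepsilon}(t)\in[L^2(M)]^N$ (the first assertion), $\Phi_{n,\varepsilon}(t)\in C(M)\cap L^p(M)$ for all $p\in(N/(N-2),\infty]$, and the identity
\[
\|\nabla\Phi_{n,\varepsilon}(t)\|_2^2=\int_M\varphi_{n,\varepsilon}(t)\,\Phi_{n,\varepsilon}(t)\,d\mathcal V \qquad\textrm{for all } t\in[0,T] \, ,
\]
which at $t=T$, since $\varphi_{n,\varepsilon}(T)=\psi$ and hence $\Phi_{n,\varepsilon}(T)=\mathcal G^\psi$, reads $\|\nabla\mathcal G^\psi\|_2^2=\int_M\psi\,\mathcal G^\psi\,d\mathcal V$. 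It thus suffices to prove that $e(t):=\int_M\varphi_{n,\varepsilon}(t)\,\Phi_{n,\varepsilon}(t)\,d\mathcal V$ is continuous on $(0,T]$ and satisfies
\[
\frac{d}{dt}\,e(t)=2\int_M[a_n(x,t)+\varepsilon]\,\varphi_{n,\varepsilon}^2(x,t)\,d\mathcal V(x)\qquad\textrm{on } (t_{k+1},t_k)\,,\ \ t_k:=\tfrac{T(n-k)}{n}\,,
\]
for each $k\in\{0,\dots,n-1\}$: integrating from $t$ to $T$ and gluing across the $n$ subintervals by continuity then gives exactly \eqref{e154}.

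The first ingredient is the curve identity $(\Phi_{n,\varepsilon})_t=(a_n+\varepsilon)\,\varphi_{n,\varepsilon}$ in $L^p(M)$, $p\in(N/(N-2),\infty)$, on each $(t_{k+1},t_k)$. I would obtain it by applying the operator $f\mapsto\mathcal G^f$ — which is bounded from $L^1(M)\cap L^\infty(M)$ into $C(M)\cap L^p(M)$ by the estimates behind Lemma \ref{Lemma0} (cf.~\eqref{e-new-3}--\eqref{e-new-4}) — to the integral identity \eqref{e141}, using that $\mathcal G^{-\Delta g}=g$ for $g=(a_n(\tau)+\varepsilon)\,\varphi_{n,\varepsilon}(\tau)$; the latter is a consequence of \eqref{e11} together with an approximation argument, $g$ being smooth in $x$ by elliptic regularity (as $a_n(t)+\varepsilon\ge\varepsilon>0$ is regular) and lying in $L^1(M)\cap L^\infty(M)$.

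The second ingredient is the differentiation of $e$ in cut-off form. With $\{\xi_R\}_{R>0}$ the family of the proof of Proposition \ref{prop6}, on $(t_{k+1},t_k)$ one has
\[
\frac{d}{dt}\int_M\xi_R\,\varphi_{n,\varepsilon}\,\Phi_{n,\varepsilon}\,d\mathcal V=\int_M\xi_R\,(\varphi_{n,\varepsilon})_t\,\Phi_{n,\varepsilon}\,d\mathcal V+\int_M\xi_R\,\varphi_{n,\varepsilon}\,(\Phi_{n,\varepsilon})_t\,d\mathcal V\,;
\]
in the first summand I substitute $(\varphi_{n,\varepsilon})_t=-\Delta[(a_n+\varepsilon)\varphi_{n,\varepsilon}]$ and integrate by parts twice using $\Delta\Phi_{n,\varepsilon}=-\varphi_{n,\varepsilon}$, obtaining $\int_M\xi_R(a_n+\varepsilon)\varphi_{n,\varepsilon}^2$ plus terms supported in $B_{2R}\setminus B_R$ carrying $\nabla\xi_R$ or $\Delta\xi_R$, whereas the second summand equals $\int_M\xi_R(a_n+\varepsilon)\varphi_{n,\varepsilon}^2$ once we substitute $(\Phi_{n,\varepsilon})_t=(a_n+\varepsilon)\varphi_{n,\varepsilon}$. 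Integrating over $(t_\ast,t^\ast)$ with $t_{k+1}<t_\ast<t^\ast<t_k$ and letting $R\to\infty$, the annular terms vanish: $|\nabla\xi_R|\le C/R$ and $|\Delta\xi_R|\le\bar C$ (by \eqref{e78}--\eqref{e77}), while $\nabla\Phi_{n,\varepsilon}(\tau),\varphi_{n,\varepsilon}(\tau)\in L^2(M)$, $\varphi_{n,\varepsilon}(\tau)\Phi_{n,\varepsilon}(\tau)\in L^1(M)$ (this being $\|\nabla\Phi_{n,\varepsilon}(\tau)\|_2^2$ by Lemma \ref{Lemma0}) and $\|a_n+\varepsilon\|_{L^\infty(M\times(t_\ast,T))}<\infty$; dominated convergence then applies to pass to the limit. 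Letting $t_\ast\downarrow t_{k+1}$, $t^\ast\uparrow t_k$ and summing over $k$ — using continuity of $t\mapsto\varphi_{n,\varepsilon}(t)$ in $L^p(M)$ on $(0,T]$, hence of $e$ by bilinearity of $(\varphi,\tilde\varphi)\mapsto\int_M\varphi\,\mathcal G^{\tilde\varphi}\,d\mathcal V$ and the continuity estimates behind Lemma \ref{Lemma0}, and the fact that $\varphi_{n,\varepsilon}\in L^2(M\times(0,T))$ by \eqref{e142} (so the right-hand side of \eqref{e154} is finite) — and invoking the identity of Lemma \ref{Lemma0} at $t$ and at $T$, one arrives at \eqref{e154}.

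The main obstacle I expect is the rigorous justification of the time regularity of $\Phi_{n,\varepsilon}$ — the curve identity $(\Phi_{n,\varepsilon})_t=(a_n+\varepsilon)\varphi_{n,\varepsilon}$ and the licitness of differentiating $\int_M\xi_R\varphi_{n,\varepsilon}\Phi_{n,\varepsilon}$ termwise on each subinterval — together with the gluing across the points $t_k$, where $\varphi_{n,\varepsilon}$ is merely continuous in time and not absolutely continuous. By contrast, the cut-off estimates controlling the annular error terms are entirely parallel to those already carried out in the proofs of Proposition \ref{prop6}, Theorem \ref{thmexi} and Lemma \ref{Lemma13}, and are routine.
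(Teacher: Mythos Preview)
Your proposal is correct and follows essentially the same route as the paper's proof: both use Lemma \ref{Lemma0} to rewrite $\|\nabla\Phi_{n,\varepsilon}(t)\|_2^2$ as $\int_M\varphi_{n,\varepsilon}(t)\,\Phi_{n,\varepsilon}(t)\,d\mathcal V$, establish the curve identity $(\Phi_{n,\varepsilon})_t=(a_n+\varepsilon)\varphi_{n,\varepsilon}$, differentiate the product, and handle the term $-\int_M\Phi_{n,\varepsilon}\,\Delta[(a_n+\varepsilon)\varphi_{n,\varepsilon}]$ via the cut-off integration-by-parts of Lemma \ref{Lemma13}. The only cosmetic difference is that the paper derives $(\Phi_{n,\varepsilon})_t$ by the test-function argument of \eqref{e130a} (plugging $\xi_R\mathcal G^\phi$ into \eqref{e141}) rather than by applying $\mathcal G$ directly to \eqref{e141}; your inversion step $\mathcal G^{-\Delta g}=g$ is fine here but does rely on a Liouville-type fact for $L^p$ harmonic functions, which the paper's cut-off route avoids.
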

\begin{proof}
Since $ \Phi_{n, \varepsilon}(t) $ is the potential of $ \varphi_{n,\varepsilon}(t) $, which belongs to $ L^1(M) \cap L^\infty(M) $ (recall \eqref{e142}), thanks to Lemma \ref{Lemma0} we have that $ \Phi_{n, \varepsilon}(t) \in L^p(M) $ for all $ p \in (N/(N-2),\infty] $, $\nabla \Phi_{n, \varepsilon}(t) \in [L^2(M)]^N$ and
\begin{equation}\label{e161}
\|\nabla \Phi_{n, \varepsilon}(t)\|_2^2=\int_M \Phi_{n,
\varepsilon}(x,t) \varphi_{n, \varepsilon}(x,t)\, d\mathcal
V(x)\,.
\end{equation}
Furthermore, one can show that $ \Phi_{n, \varepsilon}(t) $ is an absolutely continuous curve in $  L^p(M) $ for all $ p \in (N/(N-2),\infty) $, satisfying the following differential equation:
\begin{equation}\label{e163}
(\Phi_{n,
\varepsilon})_t(x,t)\,=\,[a_n(x,t)+\varepsilon]\varphi_{n,
\varepsilon}(x,t)\quad \textrm{for a.e.}\ (x,t)\in M\times (0,
T)\,.
\end{equation}
This can be established exactly as we did for \eqref{e130a}. Taking
advantage of \eqref{e138}, \eqref{e161} and \eqref{e163}, we then deduce that
\begin{equation}\label{e164}
\frac{d}{dt}\|\nabla\Phi_{n, \varepsilon}(t)\|_2^2 = \int_M[a_n(x,t)+\varepsilon]\varphi^2_{n, \varepsilon}(x,t)\,
d\mathcal V(x) - \int_M \Phi_{n,\varepsilon}(x,t) \Delta[(a_n+\varepsilon)\varphi_{n, \varepsilon}](x,t)\,
d\mathcal V(x)
\end{equation}
for a.e.~$ t\in (0,T) $. Nevertheless, by exploiting the integrability properties of $ \Phi_{n,\varepsilon} $, $ \nabla \Phi_{n,\varepsilon} $, $ \varphi_{n, \varepsilon} $ and $ \Delta[(a_n+\varepsilon)\varphi_{n, \varepsilon}] $, the last term in the r.h.s.~of \eqref{e164} can be integrated by parts (through the same cut-off techniques we used in the proof of Lemma \ref{Lemma13}), which yields
\begin{equation}\label{e164-bis}
\frac{d}{dt}\|\nabla\Phi_{n, \varepsilon}(t)\|_2^2 = 2 \int_M[a_n(x,t)+\varepsilon]\varphi^2_{n, \varepsilon}(x,t)\,
d\mathcal V(x) \quad \textrm{for a.e.}\ t\in (0,T) \, .
\end{equation}
Since the r.h.s.~of \eqref{e164-bis} is in $L^1((\tau, T))$ for each
$\tau\in (0, T)$, $t \mapsto \|\nabla \Phi_{n, \varepsilon}(t)\|_2^2$ is
continuous on $(0, T]$ and absolutely continuous on every
$(t_{k+1}, t_k)$, the conclusion follows by integrating \eqref{e164-bis} over $(t, T)$.
\end{proof}

\subsection{Taking the limit of $\varphi_{n, \varepsilon}$ as $n\to \infty$}\label{limit-n}
This section is devoted to show that, for each fixed $\varepsilon>0$, the sequence $\{\varphi_{n, \varepsilon}\}$ converges in a suitable sense to
a limit function $\varphi_\varepsilon$, as $n\to \infty$. Moreover, such $\varphi_\varepsilon$ inherits some fundamental integrability properties from
$\{\varphi_{n, \varepsilon}\}$.

\begin{lem}\label{Lemma17}
Let $u_1, u_2$ be any two solutions of problem \eqref{e64}, taking
on the same initial datum $\mu\in \mathcal M_F^+(M)$. Let $W$ be
defined as in \eqref{e146}, $a$ as in \eqref{e135}, $\varphi_{n,
\varepsilon}$, $\psi$ as in Lemma \ref{Lemma12}. Then, up to
subsequences, $\{\varphi_{n, \varepsilon}\}$ converges weakly in
$L^2(M\times (\tau, T))$ (for each $\tau\in (0, T)$), as $n\to
\infty$, to a suitable nonnegative function $\varphi_\varepsilon$.
Moreover, there hold
\begin{equation}\label{e156}
\begin{aligned}
& \int_M \varphi_{\varepsilon}(x,t) \phi(x) \, d\mathcal{V}(x) - \int_M \psi(x) \phi(x) \, d\mathcal{V}(x) \\
= & \int_M \Delta \phi (x) \left[ \int_t^T (a(x,\tau)
+\varepsilon) \varphi_{\varepsilon}(x,\tau) \, d\tau  \right]
d\mathcal{V}(x)  \quad \textrm{for a.e.}\ t \in
(0,T)\,, \ \textrm{for any}\,\, \phi\in C^\infty_c(M) \,,
\end{aligned}
\end{equation}
\begin{equation}\label{e157}
\int_M \varphi_{\varepsilon}(x,t) \, d\mathcal{V}(x) = \int_M
\psi(x)\, d\mathcal{V}(x) \quad \textrm{for a.e.}\ t \in (0,T) \,,
\end{equation}
and
\begin{equation}\label{e158}
\begin{aligned}
& \left|\int_M W(x, T) \psi(x)\, d\mathcal V(x)-\int_M W(x,t)
\varphi_\varepsilon(x,t)\, d\mathcal V(x) \right|\\
 \leq & \varepsilon (T-t) \|\psi\|_1 \|u_2(\cdot +h)-u_1(\cdot)
\|_{L^\infty(M\times (t, T))}\quad \textrm{for a.e.}\,\, t\in
(0,T)\,.
\end{aligned}
\end{equation}
\end{lem}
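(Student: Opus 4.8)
The plan is to realize $\varphi_\varepsilon$ as a weak $L^2$ cluster point of $\{\varphi_{n,\varepsilon}\}$ and then pass to the limit $n\to\infty$ in the identities of Lemmas \ref{Lemma12}--\ref{Lemma15}. For the \emph{compactness}, I would start from the energy identity \eqref{e154}: since $a_n\ge0$ and $\|\nabla\Phi_{n,\varepsilon}(t)\|_2^2\ge0$, it gives $2\varepsilon\int_t^T\int_M\varphi_{n,\varepsilon}^2\,d\mathcal V\,d\tau\le\|\nabla\mathcal G^\psi\|_2^2$ for every $t>0$, whence (letting $t\to0^+$, and using $\|\nabla\mathcal G^\psi\|_2<\infty$ by Lemma \ref{Lemma0}, as $\psi\in C^\infty_c(M)$) a bound on $\|\varphi_{n,\varepsilon}\|_{L^2(M\times(0,T))}$ uniform in $n$. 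Extracting a weakly convergent subsequence in $L^2(M\times(0,T))$ (hence a fortiori in each $L^2(M\times(\tau,T))$) yields $\varphi_\varepsilon\ge0$, since the nonnegative cone is weakly closed; testing the bound $\|\varphi_{n,\varepsilon}(t)\|_1\le\|\psi\|_1$ of \eqref{e142} against indicators of bounded sets also gives $\varphi_\varepsilon\in L^1(M\times(0,T))$, so $\varphi_\varepsilon(t)\in L^1(M)$ for a.e.\ $t$.

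To obtain \eqref{e156} I would circumvent the lack of pointwise‑in‑$t$ control by testing \eqref{e152} against an arbitrary $\chi\in C^\infty_c((0,T))$: integrating in $t$ and applying Fubini (legitimate since, for each $\tau>0$, $a_n$ is uniformly bounded on $M\times(\tau,T)$ by the hypotheses of Lemma \ref{Lemma12} and $\varphi_{n,\varepsilon}\in L^1$), the $\psi$‑term is $n$‑independent, the term $\int_0^T\!\!\int_M\chi(t)\phi(x)\varphi_{n,\varepsilon}$ passes to the limit because $\chi\phi\in L^2(M\times(0,T))$, and the term $\int_0^T\!\!\int_M X(\tau)\Delta\phi(x)\,(a_n+\varepsilon)\varphi_{n,\varepsilon}$ with $X(\tau):=\int_0^\tau\chi$ is handled by writing $(a_n+\varepsilon)\varphi_{n,\varepsilon}-(a+\varepsilon)\varphi_\varepsilon=(a_n-a)\varphi_{n,\varepsilon}+(a+\varepsilon)(\varphi_{n,\varepsilon}-\varphi_\varepsilon)$: the first summand integrates to zero in the limit since $X\Delta\phi\,(a_n-a)\to0$ strongly in $L^2$ by dominated convergence while $\|\varphi_{n,\varepsilon}\|_2$ stays bounded, the second by weak convergence against $X\Delta\phi\,(a+\varepsilon)\in L^2$. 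Since $\chi$ is arbitrary, \eqref{e156} follows a.e. Then \eqref{e157} is obtained by choosing $\phi=\xi_R$ in \eqref{e156}, with $\xi_R\in C^\infty_c(M)$ the cut‑off of the proof of Proposition \ref{prop6}: the right‑hand side is bounded by $\bar C\,(\|a\|_{L^\infty(M\times(t,T))}+\varepsilon)\int_{B_{2R}\setminus B_R}\int_t^T\varphi_\varepsilon$ via \eqref{e77}--\eqref{e78} and the boundedness of $a$ on $M\times(t,T)$ (from the local Lipschitz continuity of $s\mapsto s^m$ on $[0,\infty)$ and the smoothing bound \eqref{e70}); letting $R\to\infty$ this tends to $0$ because $\varphi_\varepsilon\in L^1(M\times(t,T))$, while the left‑hand side converges to $\int_M\varphi_\varepsilon(x,t)\,d\mathcal V(x)-\|\psi\|_1$ by monotone convergence.

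For \eqref{e158} I would start from identity \eqref{e147} of Lemma \ref{Lemma13}, split $a_n+\varepsilon-a=(a_n-a)+\varepsilon$, and note that $\Delta W(\cdot,\tau)=u_1(\cdot,\tau)-u_2(\cdot,\tau+h)$ lies in $L^\infty(M\times(t,T))\cap L^2(M\times(t,T))$ for each $t>0$ (by \eqref{e70} and $u_i\in L^\infty((0,\infty);L^1(M))$). The $(a_n-a)$‑contribution to the right‑hand side of \eqref{e147} vanishes as $n\to\infty$, because $(a_n-a)\Delta W\to0$ in $L^2(M\times(t,T))$ by dominated convergence while $\varphi_{n,\varepsilon}$ is $L^2$‑bounded; the $\varepsilon$‑contribution $\varepsilon\int_t^T\int_M\Delta W\,\varphi_{n,\varepsilon}$ converges to $\varepsilon\int_t^T\int_M\Delta W\,\varphi_\varepsilon$ by weak $L^2$‑convergence and is bounded, via \eqref{e157}, by $\varepsilon(T-t)\|\psi\|_1\,\|u_2(\cdot+h)-u_1\|_{L^\infty(M\times(t,T))}$. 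Thus the right‑hand side of \eqref{e147} converges for \emph{every} $t\in(0,T)$ to a quantity $R(t)$ with $|R(t)|$ bounded by the desired expression, and consequently $\int_M W(x,t)\,\varphi_{n,\varepsilon}(x,t)\,d\mathcal V(x)$ converges for every such $t$; it then remains to check that this limit equals $\int_M W(x,t)\,\varphi_\varepsilon(x,t)\,d\mathcal V(x)$ for a.e.\ $t$.

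The hard part is exactly this last identification, since $W(t)=\mathcal G^{u_2(t+h)}-\mathcal G^{u_1(t)}$ need not belong to $L^2(M)$ when $N\in\{3,4\}$ (Lemma \ref{Lemma0} only gives $\mathcal G^f\in L^p$ for $p>N/(N-2)$), so the weak convergence cannot be tested directly against $W$. I would dispose of it with a spatial cut‑off $\xi_R$: after testing in time against $\chi\in C^\infty_c((0,T))$, the interior piece $\int_0^T\!\!\int_M\chi\,\xi_R\,W\,\varphi_{n,\varepsilon}$ passes to the limit because $\chi\,\xi_R\,W$ is bounded with compact support in $M\times(0,T)$ --- here one uses $W\in L^\infty(M\times(\tau,T))$, which follows from $\|\mathcal G^f\|_\infty\le\|f\|_\infty+C\|f\|_1$ (a consequence of \eqref{e35}, \eqref{e33}, \eqref{e31}) together with \eqref{e70} --- while the exterior tail is controlled by $\|W\|_{L^\infty(B_R^c\times(\tau,T))}\,\|\psi\|_1$, which is small for $R$ large uniformly in $n$ and in $t$ on compact sub‑intervals. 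The decisive point making the tail small is the bound $|W(x,t)|\le2\,\mathcal G^\mu(x)$ (immediate from Lemma \ref{Lemma11}, since $\mathcal G_1,\mathcal G_2$ are non‑increasing in $t$ with limit $\mathcal G^\mu$ as $t\to0^+$ by \eqref{e131}) combined with the decay of $\mathcal G^\mu$ at spatial infinity, which in turn rests on the decay of the Green function guaranteed by hypothesis \eqref{H} --- the only genuine use of the curvature assumption in this lemma, and the technical crux of the argument. Granting this decay, combining the interior and exterior estimates yields $\int_M W(x,t)\,\varphi_{n,\varepsilon}(x,t)\,d\mathcal V(x)\to\int_M W(x,t)\,\varphi_\varepsilon(x,t)\,d\mathcal V(x)$ for a.e.\ $t$, which together with the already established convergence of the right‑hand side of \eqref{e147} gives \eqref{e158}.
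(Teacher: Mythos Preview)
Your compactness step and the derivations of \eqref{e156} and \eqref{e157} are correct; the time-averaging against $\chi\in C^\infty_c((0,T))$ is a clean alternative to the paper's Lebesgue-point argument and works for the same reasons.

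The genuine gap is in your proof of \eqref{e158}. You reduce the identification of $\lim_n\int_M W(t)\varphi_{n,\varepsilon}(t)$ to a tail estimate controlled by $\|W\|_{L^\infty(B_R^c\times(\tau,T))}$, and you justify the smallness of that tail via $|W(x,t)|\le 2\,\mathcal G^\mu(x)$ together with the claim that $\mathcal G^\mu$ decays at spatial infinity. That claim is false for general $\mu\in\mathcal M_F^+(M)$: take $\mu=\sum_k 2^{-k}\delta_{x_k}$ with $\operatorname{dist}(x_k,o)\to\infty$; then $\mathcal G^\mu(x_k)=+\infty$ for every $k$, so $\sup_{B_R^c}\mathcal G^\mu=+\infty$ for all $R$. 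The Green-function decay \eqref{e16} gives $\mathcal G^\nu\in C_0(M)$ only when $\nu$ has compact support (or at least an $L^\infty$ density, as for $u_i(t)$ with $t>0$, but then you lose the uniformity in $t$ that you need). So the tail control fails as written.

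The paper sidesteps this entirely. Having \eqref{e153} and \eqref{e157} in hand, it observes that for a.e.\ $t$ the positive measures $\varphi_{n,\varepsilon}(t)\,d\mathcal V$ converge vaguely to $\varphi_\varepsilon(t)\,d\mathcal V$ \emph{and} their total masses converge (both equal $\|\psi\|_1$); by \cite[Proposition 1.80]{AFP} this upgrades the convergence to all test functions in $C_b(M)$. Since $W(t)\in C_b(M)$ by Lemma~\ref{Lemma0}, one can pass to the limit in \eqref{e147} directly, without any decay of $W$ at infinity. You already have all the ingredients for this: your argument for \eqref{e156} yields, for a.e.\ $t$, that every vague cluster point of $\{\varphi_{n,\varepsilon}(t)\}$ along the chosen subsequence coincides with $\varphi_\varepsilon(t)\,d\mathcal V$ (compare the limit of \eqref{e152} at fixed $t$ with \eqref{e156}); combining this with \eqref{e157} gives the narrow convergence you need. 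Replace your cut-off/tail step by this tightness argument and \eqref{e158} follows.
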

\begin{proof}
From \eqref{e154} we infer that $\{\varphi_{n, \varepsilon}\}$
converges weakly (up to subsequences) in $L^2(M\times (\tau, T))$
(for each $\tau\in (0, T)$) to some $\varphi_{\varepsilon}\,.$
Moreover, thanks to \eqref{e142}, for every $t\in (0,T)$ there
exists a subsequence (which a priori depends on $t$) such that
\begin{equation}\label{e167}
\int_M \varphi_{n, \varepsilon}(t) \phi(x) \, d\mathcal V(x)
\to \, \int_M \phi(x) \, d\nu_\varepsilon^t(x)\quad \textrm{for any}\;\,\phi\in C_c(M)\,,
\end{equation}
for some $\nu_\varepsilon^t\in \mathcal M_F^+(M)$. In fact we have
that $d\nu_\varepsilon^t=\varphi_\varepsilon(t)d\mathcal V$ for a.e. $t\in (0,T)$. In order to show that, let $t\in (0,T)$ be a Lebesgue point for
$\varphi_\varepsilon(t)$ as a curve in $L^1((\tau, T); L^2(M))$. Take any $\phi\in C^\infty_c(M)$. Since for each $\tau\in (0,T)$ the sequence $\{\|a_n+\varepsilon\|_{L^\infty(M\times (\tau, T))}\}$ is bounded, in view of \eqref{e142} and \eqref{e152} we obtain
\begin{equation}\label{e165}
\begin{aligned}
&\left|\int_t^{t+\delta}\int_M \varphi_{n, \varepsilon}(x,
\tau)\phi(x) \, d\mathcal V(x) d\tau - \int_t^{t+\delta}\int_M
\varphi_{n, \varepsilon}(x,t)\phi(x) \, d\mathcal V(x) d\tau
\right| \\ \leq & \int_t^{t+\delta}
C(\tau-t)\|\psi\|_1\|\Delta\phi\|_\infty\, d\tau = \frac{\delta^2
C}{2}\|\psi\|_1 \|\Delta \phi\|_\infty\,
\end{aligned}
\end{equation}
for all $0<\delta<T-t$, for some positive constant $C$ independent
of $n$, $ \delta$. By letting $n\to \infty$ in \eqref{e165} (up to
subsequences) we get
\begin{equation}\label{e166}
\left|\int_t^{t+\delta}\int_M \varphi_\varepsilon(x, \tau)
\phi(x)\, d\mathcal V(x) d\tau -\delta \int_M \phi(x) \, d\nu^t_\varepsilon(x)
\right| \leq \frac{\delta^2 C}{2}\|\psi\|_1 \|\Delta
\phi\|_\infty\,.
\end{equation}
Upon dividing \eqref{e166} by $\delta$ and then letting $\delta \to
0^+$ we deduce that
\[\int_M \varphi_\varepsilon(x,t) \phi(x) \, d\mathcal V(x)\,=\, \int_M \phi(x) \, d\nu_\varepsilon^t(x)\,,\]
namely $\varphi_\varepsilon(t)d\mathcal V=d\nu_\varepsilon^t\,.$ Therefore,
the validity of \eqref{e156} easily follows by passing to the
limit in \eqref{e152} as $n\to \infty$, also in view of the convergence
properties of the sequence $\{a_n\}\,.$

\smallskip

Identity \eqref{e157} and estimate \eqref{e158} can be obtained along the lines of proof of \cite[Lemma 5.7]{GMPmu}: we only mention that \eqref{e157} follows by passing to the limit in \eqref{e160} and \eqref{e158} follows by passing to the limit in \eqref{e147}, which is feasible since $ W(t) \in C_b(M)$ and, thanks to \eqref{e157}, \eqref{e167} also holds for any $ \phi \in C_b(M) $.
%As concerns  identity \eqref{e157}, we only mention that it
%follows by plugging $\phi=\xi_R$ in \eqref{e156}, with $\xi_R$
%defined as in \eqref{e75}, and letting $R\to \infty$, as we did to
%obtain \eqref{e153}\,.
%
%\smallskip
%
%Finally, in view of \eqref{e157}, \eqref{e153}, \eqref{e167} and
%\cite[Proposition 1.80]{AFP} we can infer that
%\[\int_M \varphi_{n, \varepsilon}(t) \phi(x) d\mathcal V(x)\, \to \, \int_M \phi(x) \varphi_\varepsilon(x, t)d\mathcal V(x)\quad \textrm{as}\,\,
%n\to \infty,\;\, \textrm{for any}\;\,\phi\in C_b(M)\,.\] Since,
%due to Lemma \ref{Lemma0}, $W(t)\in C_b(M)$, then we can pass to
%the limit as $n\to \infty$ in \eqref{e147} to get
%\begin{equation}\label{e168}
%\begin{aligned}
%&\int_M W(x,T) \psi(x)\, d\mathcal V(x) - \int_M W(x,t)
%\varphi_\varepsilon(x,t)\,d\mathcal V(x)\\
%= &  \lim_{n\to \infty}\left(\int_t^T \int_M[a_n(x,
%\tau)+\varepsilon-a(x,\tau)][u_2(x,
%\tau+h)-u_1(x,\tau)]\varphi_{n, \varepsilon}(x,\tau)\, d\mathcal
%V(x) d\tau \right)\\
%= & \varepsilon \int_t^T\int_M [u_2(x, \tau+h)-u_1(x,
%\tau)]\varphi_\varepsilon(x,\tau)\, d\mathcal V(x) d\tau\quad
%\textrm{for a.e.}\;\, t\in (0, T)\,.
%\end{aligned}
%\end{equation}
%Note that the passage to the limit in the r.h.s. of \eqref{e168}
%is justified since $\{\varphi_{n, \varepsilon}\}$ tends to
%$\varphi_\varepsilon$ weakly in $L^2(M\times (t, T))$, $\{a_n\}$
%converges to $a$ a.e. in $M\times (0,T)$ and it is uniformly
%bounded in $M\times (t, T)$, and $u_1, u_2$ belong to
%$L^p(M\times(t, T+h))$ for any $p\in [1, \infty]$\,. Hence
%\eqref{e158} is a consequence of \eqref{e168} and \eqref{e157}\,.
\end{proof}

\subsection{Taking the limit of $\varphi_{\varepsilon}$ as $\varepsilon\to 0$ and proof of Theorem \ref{thmuni}}\label{limit-eps}
In order to prove Theorem \ref{thmuni}, we need to exploit the properties of the functions $\varphi_\varepsilon$ provided by
Lemma \ref{Lemma17}, and then let $\varepsilon\to 0$.

\begin{proof}[Proof of Theorem \ref{thmuni}]
Let $\Phi_\varepsilon(t)$ be the potential of
$\varphi_\varepsilon(t)$, that is $ \Phi_\varepsilon(x,t) = \mathcal G^{\varphi_\varepsilon(t)}(x) $. In view of \eqref{e156}, there follows
\begin{equation}\label{e170}
\mathcal G^{\psi}(x)- \Phi_\varepsilon(x, t) = \int_t^T [a(x,\tau)+\varepsilon]\varphi_\varepsilon(x, \tau)\, d\tau\geq
0\quad \textrm{for a.e.}\ (x,t)\in M\times(0, T)\,.
\end{equation}
This can be established as we did in the proof of \eqref{e130}: it is enough to plug the test function $ \xi_R \mathcal{G}^\phi $ in \eqref{e156} and let $ R \to \infty $, exploiting the fact that $ \mathcal G^\phi \in L^\infty(M) $, $ \nabla \mathcal{G}^\phi \in [L^2(M)]^N $, $ a \in L^\infty(M\times(t,T)) $ and $ \varphi_\varepsilon \in L^1(M\times(t,T)) \cap L^2(M\times(t,T)) $.

So, in particular,
\begin{equation}\label{e171}
0\leq \Phi_\varepsilon(x, t_1) \leq \Phi_\varepsilon(x, t_2) \leq
\mathcal G^\psi(x)\quad \textrm{for a.e.}\ x\in M \, , \
0<t_1<t_2<T\,.
\end{equation}
We now let $\varepsilon\to 0$. In view of \eqref{e171} it follows
that $\{\Phi_\varepsilon\}$ is bounded in $L^p(M\times (0, T))$ for
any $ p \in (N/(N-2) , \infty]$. In particular, there exists a sequence
$\{\Phi_{\varepsilon_n}\}$ that converges weakly in $L^p(M\times
(0, T))$ to some $\Phi\in L^p(M\times (0,T))$. As a consequence, in view of \eqref{e170} and
\eqref{e171}, by arguments similar to those used in the beginning
of the proof of Lemma \ref{Lemma17}, we can deduce that
$\{\Phi_{\varepsilon_n}(t)\}$ converges weakly in $L^p(M)$ to
$\Phi(t) $ for a.e.~$t\in (0, T)\,.$ Thanks to the boundedness of
$\{\varphi_{\varepsilon_n}(t)\}$ in $L^1(M)$ (recall \eqref{e157}),
for a.e.~$t\in (0, T)$ there exists a subsequence
$\{\varepsilon_{n_k}\}$ (a priori depending on $t$) such that
\begin{equation}\label{e172}
\int_M \varphi_{\varepsilon_{n_k}}(x, t)\phi(x)\, d\mathcal
V(x)\to \int_M \phi(x)\, d\nu^t(x)\quad \textrm{as}\ k\to
\infty \, , \ \textrm{for any}\ \phi\in C_0(M)\, ,
\end{equation}
for some $\nu^t\in \mathcal M_F^+(M)$. Hence, for a.e.~$t\in
(0, T)$, for any $\phi\in C_c(M)$, there holds
\begin{equation}\label{e173}
\begin{aligned}
\int_M \mathcal G^{\nu^t} \phi(x)\, d\mathcal V(x) \,=\,\lim_{k\to
\infty}\, \int_M \varphi_{n_k}(x,t) \mathcal G^\phi(x)\,d\mathcal
V(x) =& \lim_{k\to \infty} \int_M \Phi_{n_k}(x,t)
\phi(x)\,d\mathcal V(x)\\= &\int_M \Phi(x, t) \phi(x)\, d\mathcal
V(x) \,.
\end{aligned}
\end{equation}
Due to \eqref{e173} and Lemma \ref{Lemma-2}, we infer
that $\nu^t$ is independent of the particular subsequence, so that
\eqref{e172} holds along the whole sequence $\{\varepsilon_n\}$
and $\Phi(t)$ is the potential of $\nu^t$. Moreover, in view of
\eqref{e171} and of the convergence properties of $\{\Phi_{\varepsilon_n}\}$, we get
\begin{equation}\label{e174}
0\leq \Phi(x, t_1) \leq \Phi(x, t_2) \leq \mathcal G^\psi(x)\quad
\textrm{for a.e.}\ x\in M \, , \ 0<t_1<t_2<T\,.
\end{equation}
We now aim at proving that \eqref{e172} holds for any $\phi\in
C_b(M)\,.$ To this end, note that since \eqref{e157} holds and
$a\in L^\infty(M\times (\tau, T))$ for each $\tau\in (0, T)$, we
have that, up to subsequences,
\begin{equation*}\label{e175}
\int_M \phi(x) \left\{\int_t^T [a(x,
\tau)+\varepsilon_n]\varphi_{\varepsilon_n}(x,\tau) \, d\tau
\right\}d\mathcal V(x)\to \int_M \phi(x)\, d\sigma^{t, T}(x) \quad
\textrm{as}\ n \to \infty\,,
\end{equation*}
for a.e.~$t\in (0, T),$ for any $\phi\in C_c(M)$, where $ \sigma^{t, T} $ is a suitable element of
$ \mathcal M^+_F(M)$. We can therefore pass to
the limit as $n\to \infty$ in \eqref{e156} (with $\varepsilon=\varepsilon_n$) to get
\begin{equation}\label{e176}
\begin{aligned}
\int_M \phi(x) \, d\nu^t(x) - \int_M \psi(x) \phi(x) \, d\mathcal{V}(x) = \int_M \Delta \phi (x) \, d\sigma^{t, T}(x)
\end{aligned}
\end{equation}
for a.e.~$t \in (0,T)$, for any $\phi\in C^\infty_c(M)$. Now let us plug $\phi=\xi_R$ in \eqref{e176}, with $\xi_R$ defined as in the proof of Lemma \ref{Lemma11}. Thanks to \eqref{e78} and \eqref{e77}, we obtain
\begin{equation}\label{e177}
\begin{aligned}
\left| \int_M \xi_R(x) \, d\nu^t(x) - \int_M \psi(x) \xi_R(x) \, d\mathcal{V}(x)\right|
\leq \bar C \int_{B_{2R}\setminus B_R} \, d\sigma^{t, T}(x)
\quad \textrm{for a.e.}\ t \in (0,T)\,.
\end{aligned}
\end{equation}
Since $\sigma^{t, T}$ is a positive finite measure, by letting $R\to \infty$ in \eqref{e177} we get
\begin{equation}\label{e178}
\int_M d\nu^t(x)\,=\, \int_M \psi(x) \, d\mathcal V(x)\,.
\end{equation}
Due to \eqref{e157}, \eqref{e172}, \eqref{e178} and
\cite[Proposition 1.80]{AFP} we then deduce that
\begin{equation}\label{e179}
\int_M \varphi_{\varepsilon_{n}}(x, t)\phi(x)\, d\mathcal V(x)\to
\int_M \phi(x)\, d\nu^t(x)\quad \textrm{as}\,\, n\to \infty\,\,
\textrm{for any}\;\, \phi\in C_b(M)\,.
\end{equation}
As a consequence of \eqref{e179} we get
\begin{equation}\label{e139}
\int_M W(x, T) \psi(x) \, d\mathcal V(x) = \int_M W(x,t)\, d\nu^t(x) \ \ \ {\rm for\ a.e.}\ t\in(0,T) \, ,
\end{equation}
by passing to the limit as $\varepsilon=\varepsilon_n \to 0$ in \eqref{e158} (recall that, from Lemma \ref{Lemma0}, $W(t)\in C_b(M)$).

Since for a.e.~$ 0< t_\ast<t^\ast<T $ we have that $ \Phi(x,t_\ast) \le \Phi(x,t^\ast)  $ for a.e.~$ x \in M $ (see \eqref{e174}), it is direct to show that the curve $\nu^t$ can be extended to \emph{every} $t\in (0, T]$ so that it
still satisfies \eqref{e174}, \eqref{e178} and \eqref{e139}\,.
Hence, in view of Lemma \ref{Lemma11} and \eqref{e139}, we deduce that
\begin{equation}\label{e180}
\int_M W(x,T) \psi(x)\, d\mathcal V(x)\,\leq\, \int_M [\mathcal
G_2(x, h)-\mathcal G_1(x, t_0)]\, d\nu^t(x) \quad \textrm{for
all}\,\, 0<t<t_0<T\,.
\end{equation}
Thanks to \eqref{e174} and \eqref{e178}, it is straightforward to
check that there exists $\nu_0\in \mathcal M_F^+(M)$ such that
\begin{equation*}\label{e181}
\lim_{t\to 0} \int_M \phi(x) \, d\nu^t(x)\,=\, \int_M \phi(x)\,
d\nu_0 \quad \textrm{for any}\ \phi\in C_c(M)
\end{equation*}
and
\begin{equation*}\label{e182}
\mathcal G^{\nu_0}(x)\,=\,\lim_{t\to 0} \Phi(x, t):=\Phi_0(x)
\quad \textrm{for a.e.}\ x\in M\,.
\end{equation*}
Thus, by dominated convergence and Tonelli's theorem,
\begin{equation*}\label{e183}
\begin{aligned}
\lim_{t\to 0} \int_M \mathcal G_1(x, t_0)\, d\nu^t(x)=
\lim_{t\to 0} \int_M u_1(x, t_0) \Phi(x,t)\, d\mathcal V(x) = & \int_M u_1(x, t_0)\Phi_0(x)\, d\mathcal V(x) \\
= & \int_M \mathcal G_1(x, t_0)\, d\nu_0(x)\,.
\end{aligned}
\end{equation*}
We can similarly prove that
\begin{equation*}\label{e184}
\lim_{t\to 0} \int_M \mathcal G_2(x, h)\, d\nu^t(x)\,=\, \int_M
\mathcal G_2(x, h)\, d\nu_0(x)\,.
\end{equation*}
Hence, passing to the limit as $t\to 0$ in \eqref{e180} we infer
that
\begin{equation}\label{e185}
\int_M W(x,T) \psi(x)\, d\mathcal V(x)\,\leq\, \int_M [\mathcal
G_2(x, h)-\mathcal G_1(x, t_0)]\, d\nu_0(x) \quad \textrm{for
all}\,\, 0<t_0<T\,.
\end{equation}
Letting $t_0\to 0$ in \eqref{e185}, by monotone convergence and in view of Lemma \ref{Lemma11} we find
\begin{equation}\label{e186}
\int_M W(x,T) \psi(x)\, d\mathcal V(x)\,\leq\, \int_M [\mathcal
G_2(x, h)-\mathcal G^\mu]\, d\nu_0(x)\leq 0\,.
\end{equation}
Since $h>0$, $ T>0$ and $\psi\in C^\infty_c(M)$ (with $ \psi\geq 0$) are
arbitrary, from \eqref{e186} there follows $\mathcal G_1 \geq
\mathcal G_2$. Interchanging the role of $u_1$ and $u_2$, we also get $\mathcal G_1 \leq
\mathcal G_2$, so that $\mathcal G_1=\mathcal G_2$ and $u_1=u_2$ in view of Lemma \ref{Lemma-2}.
\end{proof}

\begin{oss}\label{ossuni}\rm

As a consequence of the above method of proof, we point out that Theorem \ref{thmuni} still holds under the weaker assumption that \eqref{e68} is satisfied for any $\phi\in C_c(M)$.

However, with respect to the existence counterpart, in order to prove Theorem \ref{thmuni} we require some additional hypotheses. First of all, we assume \eqref{H}-\textrm{(ii)}: this is essential to provide a cut-off family $ \xi_R $ satisfying \eqref{e77}, which is the main tool we exploit to justify all the integration by parts, as well as the conservation of mass \eqref{e178}. The positivity of the initial datum, namely the fact that $  \mu \in M^+_F(M) $, is crucial for the validity of \eqref{e131} for \emph{every} $ x \in M $ and for the monotonicity of $ \mathcal{G} $ as a function of $t$: both properties are deeply exploited in the final part of the proof of Theorem \ref{thmuni}.
%If we dropped such assumption, it is not difficult to show that \eqref{e131} would still hold at least for \emph{almost every} $ x \in M $ (see the proof of Theorem \ref{thmtracce} below), but this is not enough in order to reproduce the above proof.

Finally, in Remark \ref{supercritical} we explained how to recover existence in the range $ \frac{N-2}{N} < m < 1 $. As for uniqueness, there are two main issues. Indeed, a priori nothing guarantees that $ u^m(t) \in L^1(M) $ for positive times: this prevents us from proving that the remainder integrals $I_1$ and $I_2$ in the proof of Lemma \ref{Lemma11} vanish as $R \to \infty$.
% hence we are unable to establish \eqref{e130a}.
On the other hand, the function $a$ in \eqref{e135} is no more bounded for positive times, a crucial property that we exploit throughout the proof of Theorem \ref{thmuni}.
\end{oss}

\subsection{Proof of existence and uniqueness of the initial trace}
%\begin{pro}\label{prop6}
%Let $M$ be a Cartan-Hadamard manifold of dimension $N\ge2$ and let assumption \eqref{H}-(ii) be satisfied. Let $u$ be a weak solution to problem \eqref{e64}. Then
%\begin{equation*}\label{e74}
% \int_M u(x,t) \, d\mathcal{V}(x) \,=\, \mu(M)\quad
%\textrm{for a.e.}\ t>0\,,
%\end{equation*}
%namely we have {\em conservation of mass}.
%\end{pro}
First of all let us note that, under the assumptions of Theorem \ref{thmtracce}, the proof of Lemma \ref{Lemma11} works without further issues down to the proof of identity \eqref{e130}. Moreover, by combining the latter with the smoothing effect \eqref{e70-pre} and proceeding as in the proof of Theorem \ref{thmexi}, we end up with the estimate
\begin{equation}\label{e125-trace}
\left| \int_M \mathcal{G}(x, t_2) \phi(x) \, d\mathcal V(x) - \int_M \mathcal{G}(x, t_1) \phi(x) \, d \mathcal V(x) \right| \leq \|\phi \|_{\infty} \, K^{m-1} \left\| u \right\|_{L^\infty((0,\infty);L^1(M))}^{1+\beta(m-1)} \, \int_{t_1}^{t_2}t^{-\alpha(m-1)} \, dt \, ,
\end{equation}
where we denote again by $ \mathcal{G}(t) $ the potential of $u(t)$. Furthermore, given any $ t_2>t_1>0 $ and any $ R \ge 1 $, by plugging in \eqref{e67} the test function
\[
\varphi(x,t)= \xi_R(x) \left[\theta_\varrho^{t_1}(t) - \theta_\varrho^{t_2}(t) \right] \quad \textrm{for all}\ x \in M \, , \ t>0
\]
($ \theta_\varrho^{t_\cdot} $ is defined as in the proof of Proposition \ref{pro: cons} and $ \xi_R $ as in the proof of Lemma \ref{Lemma11}), integrating by parts, letting $ \varrho \to 0 $ and using \eqref{e77}, we obtain:
\begin{equation}\label{eq: cons-mass-trace}
\left| \int_M u(x, t_2) \xi_R(x) \, d\mathcal V(x) - \int_M u(x, t_1) \xi_R(x) \, d\mathcal V(x) \right| \le  \bar C \int_{t_1}^{t_2} \int_{B_{2R}\setminus B_R}|u(x,t)|^m \, d\mathcal V(x) dt \, .
\end{equation}
In addition, \eqref{e65} and the smoothing effect \eqref{e70-pre} ensure that
\begin{equation}\label{eq: cons-mass-trace-bis}
\int_{0}^{t_2} \int_{M} |u(x,t)|^m \, d\mathcal V(x) dt \le \frac{K^{m-1}}{1-\alpha(m-1)} \left\| u \right\|_{L^\infty((0,\infty);L^1(M))}^{1+\beta(m-1)} t_2^{1-\alpha(m-1)} < \infty \, .
\end{equation}
Having established \eqref{e125-trace}--\eqref{eq: cons-mass-trace-bis} for general weak solutions to the differential equation in \eqref{e64}, we are in position to prove Theorem \ref{thmtracce}.
\begin{proof}[Proof of Theorem \ref{thmtracce}]
%In order to establish the identity
%\begin{equation}\label{e250}
%\int_M \mathcal{G}(x, t_2) \phi(x) \, d\mathcal V(x) - \int_M \mathcal{G}(x, t_1) \phi(x) \, d\mathcal V(x) \,=\, - \int_{t_1}^{t_2} \int_M u^m(x,t)
%\phi(x) \, d\mathcal V(x) dt
%\end{equation}
%for all $t_2>t_1>0$ and $\phi\in C^\infty_c(M)$, we refer again to the proof of Lemma \ref{Lemma11} (and therefore to the proof of Theorem \ref{thmexi}). Since by hypothesis $u^m\in L^1((0,T); L^1_{\rm loc}(M))$,
In view of \eqref{e125-trace} we can infer that the family $\{\mathcal{G}(t)\}$ is Cauchy in $L^1_{\rm loc}(M)$ as $ t \to 0 $, hence there exists a function $ \mathcal{G}_0 \in L^1_{\rm loc}(M)$ such that $\mathcal G(t)\to \mathcal{G}_0$ as $t\to 0$ in $L^1_{\rm loc}(M)$. Moreover, the fact that $u\in L^\infty((0,\infty); L^1(M))$ implies that for every sequence $t_n\to 0$ there
exists $\mu\in \mathcal M_F(M)$ such that $\{u(t_n)\}$ converges vaguely to $\mu$ as $n\to \infty$, up to a subsequence (recall Definition \ref{def1}). On the other hand, the convergence of $\{\mathcal{G}(t_n)\}$ to $\mathcal{G}_0$ in $L^1_{\rm loc}(M)$ implies $\mathcal G^{\mu}=\mathcal{G}_0$, so that by Lemma \ref{Lemma-2} the measure $\mu$ does not depend on the sequence $\{t_n\}$, and \eqref{e68} holds for all $ \phi \in C_c(M) $. In order to prove that \eqref{e68} also holds for constant functions, we can exploit \eqref{eq: cons-mass-trace}: by letting $ t_1 \to 0 $ and using the vague convergence of $ \{ u(t_1) \} $ to $ \mu $ we end up with
\begin{equation*}%\label{eq: cons-mass-trace-limit}
\left| \int_M u(x, t_2) \xi_R(x) \, d\mathcal V(x) - \int_M \xi_R(x) \, d\mu(x) \right| \le \bar C \int_{0}^{t_2} \int_{B_{2R}\setminus B_R}|u(x,t)|^m \, d\mathcal V(x) dt \, .
\end{equation*}
We then let $ R \to \infty $: thanks to \eqref{eq: cons-mass-trace-bis} we obtain
\begin{equation*}\label{eq: cons-mass-trace-limit}
\left| \int_M u(x, t_2) \, d\mathcal V(x) - \int_M d\mu(x) \right| \le 0 \, ,
\end{equation*}
namely the conservation of mass or, equivalently, the fact that \eqref{e68} holds for $ \phi $ equal to any constant. In the case where $ u \ge 0 $, the last assertion of the theorem is just a consequence of \cite[Proposition 1.80]{AFP}, since for positive measures the vague convergence plus the convergence of the measures is equivalent to convergence in the dual space of $ C_b(M) $.
%
%\smallskip
%
%Suppose now that $u\geq 0$. By \eqref{e65} it follows that $u^m\in L_{\rm loc}^\infty((0,\infty); L^p(M))$ for all $ p \in [1,\infty] $, and this is
%enough to get \eqref{e250}; from the latter we can deduce that $\mathcal G(x, t_2)\leq \mathcal G(x, t_1)$ for any $x\in M$ and $ 0<t_1<t_2<\infty$
%(see the proof of Lemma \ref{Lemma11}). Hence there exists a nonnegative function $\mathcal{G}_0$ such that $\mathcal G(x, t) \to \mathcal{G}_0(x)$
%as $t\to 0$ for any $x\in M$. As in the previous part of the proof, for any $t_n\to 0$ we have that $ \{u(t_n)\} $ converges vaguely to some $\mu\in
%\mathcal M_F^+(M)$ as $n\to \infty$, up to a subsequence. Thanks to Proposition \ref{teo10} (whose assumptions are fulfilled since $ u \in L^\infty((0,\infty);L^1(M)) $), we deduce that $\mathcal G^{\mu}=\mathcal{G}_0$, which implies again that \eqref{e68} holds for any $\phi\in C_c(M)$. In order to
%prove that \eqref{e68} is satisfied for any $\phi\in C_b(M)$ as well, consider the solution $v$ to problem \eqref{e64}, provided by Theorem
%\ref{thmexi}, with initial datum the same measure $\mu$ above. In view of Theorem \ref{thmuni} combined with Remark \ref{ossuni}, we infer that $v=u$, so that $u$ actually satisfies \eqref{e68} for any $\phi\in C_b(M)$.
\end{proof}

\end{document}